\newtheorem{thm}{Theorem}[section]
\newtheorem{prop}[thm]{Proposition}
\newtheorem{lem}[thm]{Lemma}
\newtheorem{cor}[thm]{Corollary}
\theoremstyle{definition}
\newtheorem{defn}[thm]{Definition}
\theoremstyle{remark}
\newtheorem{remk}[thm]{Remark}
\newtheorem{remks}[thm]{Remarks}
\newtheorem{exm}[thm]{Example}
\newtheorem{exms}[thm]{Examples}
\newtheorem{notat}[thm]{Notation}
\numberwithin{equation}{section}
\newcommand{\thmref}{Theorem~\ref}
\newcommand{\propref}{Proposition~\ref}
\newcommand{\corref}{Corollary~\ref}
\newcommand{\lemref}{Lemma~\ref}
\newcommand{\sE}{{\mathcal E}}
\newcommand{\sF}{{\mathcal F}}
\newcommand{\sH}{{\mathcal H}}
\newcommand{\sI}{{\mathcal I}}
\newcommand{\sK}{{\mathcal K}}
\newcommand{\sL}{{\mathcal L}}
\newcommand{\sO}{{\mathcal O}}
\newcommand{\sR}{{\mathcal R}}
\newcommand{\sU}{{\mathcal U}}
\newcommand{\sW}{{\mathcal W}}
\newcommand{\sZ}{{\mathcal Z}}
\newcommand{\F}{{\mathbb F}}
\newcommand{\G}{{\mathbb G}}
\renewcommand{\H}{{\mathbb H}}
\newcommand{\N}{{\mathbb N}}
\renewcommand{\P}{{\mathbb P}}
\newcommand{\Q}{{\mathbb Q}}
\newcommand{\Z}{{\mathbb Z}}
\newcommand{\fp}{{\mathfrak p}}
\newcommand{\fq}{{\mathfrak q}}
\newcommand{\Alb}{{\rm Alb}}
\newcommand{\CH}{{\rm CH}}
\newcommand{\surj}{\twoheadrightarrow}
\newcommand{\inj}{\hookrightarrow}
\newcommand{\Pic}{{\rm Pic}}
\newcommand{\Spec}{{\rm Spec \,}}
\newcommand{\divf}{{\rm div}}
\newcommand{\Sch}{{\operatorname{\mathbf{Sch}}}}
\newcommand{\Sm}{{\mathbf{Sm}}}
\newcommand{\ds}{{/\kern-3pt/}}
\newcommand{\ov}{\overline}
\renewcommand{\dim}{\text{\rm dim}}
\newcommand{\tuborg}{\left\{\begin{array}{ll}}
\newcommand{\sluttuborg}{\end{array}\right.}
\newcommand{\zar}{{\rm zar}}
\newcommand{\wt}{\widetilde}
\newcommand{\etl}{{\acute{e}t}}
\newcounter{elno}
\newcounter{elno-abc}   
\newenvironment{listabc}{
                         \begin{list}{\alph{elno-abc})
                                     }{\usecounter{elno-abc}}
                      }{
                         \end{list}}
\newcounter{elno-abc-prime}   
\newenvironment{listabcprime}{
                         \begin{list}{\alph{elno-abc-prime}')
                                     }{\usecounter{elno-abc-prime}}
                      }{
                         \end{list}}
\begin{document}
\title{Torsion in the 0-cycle group with modulus}
\author{Amalendu Krishna}
\address{School of Mathematics, Tata Institute of Fundamental Research,  
1 Homi Bhabha Road, Colaba, Mumbai, India}
\email{amal@math.tifr.res.in}



\keywords{cycles with modulus, cycles on singular schemes, 
algebraic $K$-theory, {\'e}tale cohomology}

\subjclass[2010]{Primary 14C25; Secondary 13F35, 14F30, 19E15}

\maketitle

\begin{quote}\emph{Abstract.}  We show, for a smooth projective
variety $X$ over an algebraically closed field $k$ with 
an effective Cartier divisor $D$, that the torsion subgroup $\CH_0(X|D)\{l\}$  
can be described in terms of a relative {\'e}tale cohomology for
any prime $l \neq p = {\rm char}(k)$. This extends
a classical result of Bloch, on the torsion in the ordinary Chow group, to
the modulus setting. 
We prove the Roitman torsion theorem (including $p$-torsion)
for $\CH_0(X|D)$ when $D$ is reduced. We deduce applications to the problem
of invariance of the prime-to-$p$ torsion in $\CH_0(X|D)$ under an infinitesimal
extension of $D$. 
\end{quote}
\setcounter{tocdepth}{1}
\tableofcontents  


\section{Introduction}\label{sec:Intro}
One of the fundamental problems in the study of the theory of motives,
as envisioned by Grothendieck, is to construct a universal cohomology
theory of varieties and describe it in terms of algebraic cycles. 
When $X$ is a smooth quasi-projective scheme over a base field $k$, 
such a motivic cohomology theory of $X$ is known to exist (see, for example,
\cite{Voev-PST}, \cite{Levine-Mixed}). Moreover, 
Bloch \cite{Bloch-M} showed that this cohomology theory has
an explicit description in terms of groups of algebraic cycles, called 
the higher Chow groups. These groups have all 
the properties that one expects, including Chern classes and a 
Chern character isomorphism from the higher $K$-groups of Quillen.

However, the situation becomes much complicated when we go beyond the world of
smooth varieties. There is no theory of motivic cohomology of singular
schemes known till date which could be a universal cohomology theory,
which could be described in terms of algebraic cycles and, which could
be rationally isomorphic to the algebraic $K$-theory. 

With the aim of understanding the algebraic $K$-theory of the ring 
$k[t]/(t^n)$ in terms of algebraic cycles, Bloch and Esnault introduced
the idea of algebraic cycles with modulus, 
called additive Chow groups. This idea was substantially expanded
in the further works of Park \cite{Park}, R\"ulling \cite{Ruelling}
and Krishna-Levine \cite{KLevine}.  

In recent works of Binda-Saito  \cite{BS} and Kerz-Saito \cite{KeS},
a theory of higher Chow groups with modulus was introduced,
which generalizes the construction of the additive higher Chow groups.
These groups, denoted $\CH^*(X|D, *)$,  are designed to 
study the arithmetic and geometric properties of a smooth variety $X$ with 
fixed conditions along an effective  (possibly non-reduced) Cartier divisor 
$D$ on it (see \cite{KeS}), 
and are supposed to give a cycle-theoretic description of the 
relative $K$-groups $K_*(X,D)$, defined as the homotopy groups of 
the homotopy fiber of the restriction map $K(X)\to K(D)$ (see \cite{IK}). 

In order to  provide evidence that the Chow groups with modulus are the 
right motivic cohomology groups to compute the relative $K$-theory of a 
smooth scheme with respect to an effective divisor, one would like to know if 
these groups share enough of the known structural properties of the 
Chow groups without modulus, and, if these groups could be related to
other cohomological invariants of a pair $(X,D)$. 
In particular, one would like to know
if the torsion part of the Chow group of 0-cycles with modulus
could be described in terms of a relative {\'e}tale cohomology or, in terms of
an Albanese variety with modulus.

\subsection{The main results}
For the Chow group of 0-cycles without modulus on a smooth projective scheme, 
Bloch \cite{Bloch-tor} showed that its prime-to-$p$ torsion is completely 
described in terms of an {\'e}tale cohomology. Roitman \cite{Roitman} and
Milne \cite{Milne-1} showed that the torsion in the Chow group 
is completely described in terms of the Albanese of the
underlying scheme.
When $D$ is an effective Cartier divisor on a smooth projective
scheme $X$, the Albanese variety with modulus $A^d(X|D)$ with appropriate 
universal property and the Abel-Jacobi map 
$\rho_{X|D}: \CH_0(X|D)_{\deg 0} \to A^d(X|D)$ were constructed in \cite{BK}.

The goal of this work is to extend the torsion results of
Bloch and Roitman-Milne  
to 0-cycles with modulus. Our main results thus broadly look
as follows. Their precise statements and underlying hypotheses 
and notations will be explained at appropriate places in the text.

\begin{thm}$($Bloch's torsion theorem$)$\label{thm:Intro-1}
Let $k$ be an algebraically closed field of exponential characteristic $p$.
Let $X$ be a smooth projective scheme of dimension $d \ge 1$ over $k$ and let
$D \subset X$ be an effective Cartier divisor. 
Then, for any prime $l \neq p$, there is an isomorphism
\[
\lambda_{X|D}: \CH_0(X|D)\{l\} \xrightarrow{\simeq} 
H^{2d-1}_{\etl}(X|D, {\Q_l}/{\Z_l}(d)).
\]
\end{thm}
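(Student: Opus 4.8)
The plan is to follow the strategy of Bloch's original torsion theorem, constructing a cycle class map and reducing the isomorphism to a comparison with finite coefficients, while carrying the modulus condition along at every stage. First I would produce a cycle class map from the $0$-cycle complex with modulus into the relative \'etale motivic complex $\ZDeXr$ computing $H^*_{\etl}(X|D, -)$, and restrict it to $l$-primary torsion. Since $\Q_l/\Z_l(d) = \colim_n \Z/l^n(d)$ and \'etale cohomology commutes with this filtered colimit of coefficient sheaves, while $\CH_0(X|D)\{l\} = \colim_n \CH_0(X|D)[l^n]$, the statement reduces to producing compatible isomorphisms at each finite level
\[
\CH_0(X|D)[l^n] \xrightarrow{\simeq} H^{2d-1}_{\etl}(X|D, \Z/l^n(d)).
\]

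The heart of the argument is this finite-coefficient isomorphism. My approach would be to reduce it to the classical theorem of Bloch for $X$ (without modulus), which we take as known, by means of a localization argument. Because $l \neq p$, the unipotent part of $\CH_0(X|D)_{\deg 0}$ introduced by the modulus is $l$-divisible and $l$-torsion-free, so all prime-to-$p$ torsion is detected by the semi-abelian quotient; correspondingly, the relative complex $\ZDeXr$ should fit into a long exact sequence relating $H^*_{\etl}(X|D, \Z/l^n(d))$ to the \'etale cohomology of $X$ and to a boundary term supported along $D$. I would set up the parallel long exact sequence on the cycle side, identify the interior terms by Bloch's isomorphism for $X$, and then match the boundary contributions. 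The boundary comparison is to be supplied by the relative version of the identification of cycles with modulus with relative $K$-theory (in the spirit of Binda--Saito and Kerz--Saito) together with the class field theory with modulus that governs ramification along $D$.

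Once the finite-level isomorphisms are established compatibly in $n$, passing to the colimit immediately yields the asserted isomorphism $\lambda_{X|D}\colon \CH_0(X|D)\{l\} \xrightarrow{\simeq} H^{2d-1}_{\etl}(X|D, \Q_l/\Z_l(d))$, the prime-to-$p$ torsion being entirely carried by the degree-zero part so that the degree contribution, which lives in the even degree $2d$, does not interfere in degree $2d-1$.

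The step I expect to be the main obstacle is the finite-coefficient comparison at the divisor. In the smooth projective case the top-degree comparison is governed by unramified cohomology, but the modulus condition forces control of wild ramification along $D$, so the boundary terms in the localization sequence are genuinely new and do not reduce formally to the classical theorem. Making the cycle-theoretic and cohomological exact sequences agree term by term requires the full strength of the relative comparison theorems, and when $D$ is non-reduced it demands a careful treatment of the pro-system of infinitesimal thickenings; this is where the hypotheses $l \neq p$ and the projectivity of $X$ are indispensable, the former to kill the additive part and the latter to guarantee the finiteness and duality needed to identify the two sides.
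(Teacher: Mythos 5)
Your outline is not the paper's route, and more importantly it leaves the two genuinely hard steps unproved. The paper does not construct a cycle class map on $X|D$ directly: it passes to the double $S_X = X\amalg_D X$, uses the decomposition theorem $0 \to \CH_0(X|D) \to \CH_0(S_X) \to \CH_0(X) \to 0$ and its \'etale analogue, and thereby reduces everything to Bloch's theorem for the \emph{singular} projective variety $S_X$. That case is then handled by induction on dimension: for surfaces via Gillet's Chern classes, the Brown--Gersten spectral sequence and Levine's formula $H^2(Y,\sK_2)\simeq \CH_0(Y)$, which is where the map between the two sides actually comes from; for $d\ge 3$ via a Lefschetz hyperplane theorem for the semi-abelian Albanese of the double together with a weak Lefschetz theorem for $H^{2d-1}_{\etl}(S_X,-)$. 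Your plan instead posits a cycle class map from cycles with modulus into a relative \'etale motivic complex and a term-by-term comparison of localization sequences. As the paper stresses, there is no obvious map in either direction, and producing one is the essential content of the theorem; your proposal defers exactly this to "relative comparison theorems" and "class field theory with modulus" without constructing anything, and you yourself flag the boundary comparison as an obstacle you cannot resolve. That is a genuine gap, not a proof.

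Two further points would derail the plan even if the map were constructed. First, the reduction to isomorphisms at each finite level $\CH_0(X|D)[l^n]\simeq H^{2d-1}_{\etl}(X|D,\Z/l^n(d))$ is false in general: already for a smooth projective surface without modulus one has the exact sequence
\[
0 \to H^1(X,\sK_2)\otimes_{\Z}\Z/l^n \to H^3_{\etl}(X,\mu_{l^n}(2)) \to {_{l^n}{\CH_0(X)}} \to 0,
\]
and the kernel term need not vanish at finite level; it dies only after passing to the colimit (this is the content of the divisibility statement $H^1(X,\sK_2)\otimes_{\Z}\Q_l/\Z_l=0$, itself a nontrivial theorem proved via a counting bound on $H^3_{\etl}$ against ${_{l^n}J^2}$). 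So the correct finite-level statement is an exact sequence, not an isomorphism, and the limit argument needs the extra divisibility input. Second, the worry about wild ramification and pro-systems of infinitesimal thickenings of $D$ is misplaced for $l\neq p$: the relative \'etale cohomology $H^*_{\etl}(X|D,\mu_n(j))$ is nil-invariant in $D$ when $(n,p)=1$, so all the subtlety of the non-reduced modulus sits on the cycle side, and the assertion that the resulting map ${_n\CH_0(X|D)}\to {_n\CH_0(X|D_{\rm red})}$ is an isomorphism is a \emph{consequence} of the theorem, not an available input.
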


\begin{thm}$($Roitman's torsion theorem$)$\label{thm:Intro-2}
Let $k$ be an algebraically closed field of exponential characteristic $p$.
Let $X$ be a smooth projective scheme of dimension $d \ge 1$ over $k$ and let
$D \subset X$ be an effective Cartier divisor. Assume that $D$ is reduced.
Then, the Albanese variety with modulus $A^d(X|D)$ is a semi-abelian variety
and the Abel-Jacobi map $\rho_{X|D}: \CH_0(X|D)_{\deg 0} \to A^d(X|D)$ is an
isomorphism on the torsion subgroups (including $p$-torsion).
\end{thm}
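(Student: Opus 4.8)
The plan is to split the torsion into its prime-to-$p$ and $p$-primary parts, exploiting the semi-abelian structure of the target. Writing $U = X \setminus D$, the first step is to record that $A^d(X|D)$ is a semi-abelian variety, sitting in an extension of algebraic groups $0 \to T \to A^d(X|D) \to \Alb(X) \to 0$, where $\Alb(X)$ is the usual Albanese and $T$ is a torus whose cocharacter lattice is built from the components of $D$. The reducedness of $D$ is exactly what guarantees that the part of the relative Picard variety dual to $A^d(X|D)$ coming from the divisor is toric rather than unipotent, so no additive factor appears. I would also record at the outset the compatibility of the forgetful map $\CH_0(X|D) \to \CH_0(X)$ with the projection $A^d(X|D) \twoheadrightarrow \Alb(X)$, via the Abel--Jacobi maps $\rho_{X|D}$ and $\rho_X$; this square is used repeatedly below.

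For a prime $l \neq p$ I would combine \thmref{thm:Intro-1} with a Tate-module computation. The $l$-primary torsion of the semi-abelian variety is $A^d(X|D)\{l\} = T_l A^d(X|D) \otimes_{\Z_l} \Q_l/\Z_l$, and Kummer theory applied to the extension sequence identifies this Tate module with the relative étale $H^1_{\etl}$ of the pair, hence, after the duality pairing for $(X,D)$, with $H^{2d-1}_{\etl}(X|D, \Q_l/\Z_l(d))$. The content is then to check that the isomorphism $\lambda_{X|D}$ of \thmref{thm:Intro-1} factors as this duality isomorphism composed with $\rho_{X|D}$ on $\{l\}$; this is the standard compatibility between the cycle class map and the Abel--Jacobi map, which I would verify by reducing to the class of a single closed point of $U$. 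Granting it, $\rho_{X|D}$ is an isomorphism on $l$-primary torsion for every $l \neq p$.

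The main obstacle is the $p$-primary torsion, which lies outside the range of \thmref{thm:Intro-1}. The key simplification is that over the algebraically closed field $k$ of characteristic $p$ the torus $T$ has trivial $p$-torsion and is $p$-divisible, since $\mu_{p^n}(k) = \{1\}$ and $k^\times$ is $p$-divisible; applying the snake lemma to multiplication by $p^n$ on the extension sequence gives $A^d(X|D)\{p\} \xrightarrow{\sim} \Alb(X)\{p\}$. Thus $\rho_{X|D}$ will be an isomorphism on $p$-torsion once I show that the forgetful map $\CH_0(X|D)_{\deg 0}\{p\} \to \CH_0(X)_{\deg 0}\{p\}$ is an isomorphism, because the bottom map $\rho_X$ is an isomorphism on $p$-torsion by the classical Roitman--Milne theorem for the smooth projective variety $X$, and the right-hand vertical map is the isomorphism just obtained.

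Controlling the forgetful map on $p$-torsion is the crux of the argument. Its kernel and cokernel are governed by the relative group measuring the failure of a rational equivalence on $X$ to respect the modulus along $D$, and the point to prove is that this relative contribution carries no $p$-torsion and is $p$-divisible --- mirroring, at the level of Chow groups, the vanishing of $T\{p\}$ and the $p$-divisibility of $T(k)$. I would attack this through the relative $K$-theory and flat cohomology description of the kernel: the relevant relative sheaf is built from the coherent data along the reduced divisor, whose fppf cohomology with $\mu_{p^n}$-coefficients is $p$-torsion-free and $p$-divisible over $k$. Once injectivity and surjectivity of the forgetful map on $p$-torsion are established, the diagram chase above completes the proof; the surjectivity of $\rho_{X|D}$ onto all torsion is comparatively soft, as $\rho_{X|D}$ is surjective as a homomorphism of abstract groups and the torsion points of a semi-abelian variety are divisible.
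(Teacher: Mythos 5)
Your high-level skeleton --- handle $l \neq p$ via \thmref{thm:Intro-1} and reduce the $p$-part to showing that $\CH_0(X|D)_{\deg 0}\{p\} \to \CH_0(X)_{\deg 0}\{p\}$ is an isomorphism --- is a reasonable shadow of what actually happens (the paper's decomposition $\CH_0(X|D) \oplus \CH_0(X) \simeq \CH_0(S_X)$ via the double $S_X = X \amalg_D X$ turns that same comparison into the statement that $\rho_{S_X}$ is an isomorphism on torsion). But there are two genuine gaps. First, you take as input that $A^d(X|D)$ is semi-abelian, with a torus $T$ built from the components of $D$, ``because $D$ is reduced.'' In characteristic $p$ this is not automatic and is in fact part of the conclusion: the paper obtains the vanishing of the unipotent part of $A^d(S_X)$ only as a by-product of the torsion theorem itself (torsion surjectivity of $\CH_0(S_X)_{\deg 0} \to A^d(S_X)$ from curves, plus injectivity into the semi-abelian quotient $J^d(S_X)$, forces $A^d_{\rm unip}(S_X)$, a bounded-exponent $p$-group, to vanish). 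Reducedness of $D$ gives weak normality of $S_X$, which suffices in characteristic $0$ but is exactly the open issue in characteristic $p$. So your isomorphism $A^d(X|D)\{p\} \simeq \Alb(X)\{p\}$ presupposes what is being proved.

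Second, and more seriously, the crux you correctly identify --- that the kernel of the forgetful map carries no $p$-torsion and is $p$-divisible --- is not actually argued. The mechanism you propose (``fppf cohomology with $\mu_{p^n}$-coefficients is $p$-torsion-free and $p$-divisible over $k$'') cannot be right as stated, since any $\mu_{p^n}$-cohomology group is killed by $p^n$. The real content in the paper is a $K$-theoretic excision and an induction on dimension: one cuts down to a complete intersection surface through the relevant Cartier curve (after blow-ups, using a Bertini theorem preserving the join structure and a Lefschetz isomorphism $J^2(Y') \simeq J^d(X')$ on abelian parts), and for the resulting separably weakly normal surface one shows $\CH_0(Y)\{p\} \to \CH_0(\ov{Y})\{p\}$ is an isomorphism using the exact sequence $SK_1(\ov{Y}) \oplus SK_1(E) \to SK_1(\ov{E}) \to \CH_0(Y) \to \CH_0(\ov{Y}) \to 0$, the divisibility of $SK_1$ of reduced curves, the vanishing of $SK_1(\ov{Y}) \otimes {\Q_p}/{\Z_p}$ for a normal surface, and finally the Roitman theorem for normal surfaces of Krishna--Srinivas. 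None of these inputs, nor any substitute for the dimension reduction, appears in your outline, so the $p$-primary case remains unproved.
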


Note that the isomorphism of \thmref{thm:Intro-1} is very subtle because one 
does not have any obvious map in either direction. The construction of such a 
map is an essential part of the problem.
In the case of Chow group 
without modulus, the construction of $\lambda_X$ by Bloch makes essential use 
of the Bloch-Ogus theory and, more importantly, it uses the Weil-conjecture.
The prime-to-$p$ part of \thmref{thm:Intro-2} was earlier proven in \cite{BK}.
The new contribution here is the description of the more challenging $p$-part.

As part of the proof of \thmref{thm:Intro-2}, we prove the Roitman
torsion theorem (including $p$-torsion) for singular separably weakly normal 
surfaces (see Definition~\ref{defn:swn}).
This provides the first evidence that the Roitman torsion theorem 
may be true for non-normal varieties in positive characteristic.

\begin{thm}\label{thm:Intro-3}
Let $X$ be a reduced projective separably weakly normal surface over an
algebraically closed field $k$ of exponential characteristic $p$.
Then, the Albanese variety $A^2(X)$ is a semi-abelian variety
and the Abel-Jacobi map $\rho_{X}: \CH_0(X)_{\deg 0} \to A^2(X)$ is an
isomorphism on the torsion subgroups (including $p$-torsion).
\end{thm}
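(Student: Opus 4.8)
The plan is to reduce the theorem to the modulus Roitman theorem (Theorem~\ref{thm:Intro-2}) by presenting the singular surface as a pinching of a smooth surface along a reduced divisor. First I would take the normalization $\pi\colon \wt{X}\to X$, with (reduced) non-normal locus $C\subset X$ and conductor subscheme $\wt{C}=\pi^{-1}(C)\subset \wt{X}$. The separable weak normality of $X$ is exactly the hypothesis guaranteeing that the conductor is a radical ideal and that the gluing reconstructing $X$ from $\wt{X}$ along $\wt{C}$ is separable; in particular $\wt{C}$ is a \emph{reduced} divisor. In the principal case, where $\wt{X}$ is already smooth (for instance when $X$ is the pushout $\wt{X}\sqcup_{\wt{C}}C$), I set $(Y,D)=(\wt{X},\wt{C})$ and obtain a smooth projective surface $Y$ with a reduced effective divisor $D$.

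The geometric core is to construct compatible comparison maps
$$\theta\colon \CH_0(Y\,|\,D)_{\deg 0}\xrightarrow{\ \simeq\ }\CH_0(X)_{\deg 0},\qquad A^2(Y\,|\,D)\xrightarrow{\ \simeq\ }A^2(X),$$
intertwining the Abel--Jacobi maps $\rho_{Y|D}$ and $\rho_X$, and to prove that they are isomorphisms. Since $\pi$ is an isomorphism over $X\setminus C$, the cycle groups are tautologically identified away from $D$; the content is that the modulus condition along the reduced divisor $D$ reproduces precisely the rational equivalence cut out by Cartier curves through the singular locus of $X$. I would establish this by comparing the two presentations over Cartier curves by means of the conductor square, following the method that identifies the Levine--Weibel group of a singular variety with the modulus group of a resolution. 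Granting the cycle comparison, the Albanese comparison follows formally from the universal properties: both $A^2(X)$ and $A^2(Y|D)$ corepresent regular homomorphisms from the respective degree-zero Chow groups to semi-abelian varieties, so an isomorphism of the source functors induces one on the universal objects. Applying Theorem~\ref{thm:Intro-2} to $(Y,D)$ with $D$ reduced then shows that $A^2(Y|D)$ is semi-abelian and that $\rho_{Y|D}$ is an isomorphism on torsion, including $p$-torsion; transporting along $\theta$ yields the theorem for $X$.

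For a general separably weakly normal $X$ the normalization $\wt{X}$ may still carry finitely many normal singular points, which I would remove by a resolution $f\colon Y\to\wt{X}$ (available for surfaces in every characteristic), enlarging $D$ to the reduced divisor supported on $\wt{C}$ and on the exceptional locus of $f$. The same comparison strategy applies, now with the additional task of checking that the exceptional contribution is annihilated by the modulus condition. The semi-abelian assertion for $A^2(X)$ is immediate from this reduction, and the prime-to-$p$ part of the torsion statement can alternatively be read off from the known prime-to-$p$ Roitman theorem for singular surfaces together with Theorem~\ref{thm:Intro-1}.

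The hard part is the $p$-torsion, and the entire difficulty is concentrated in showing that $\theta$ remains an isomorphism on $p$-primary torsion. In characteristic $p$, normalization and resolution can a priori inject infinitesimal group-scheme data ($\mu_p$, $\alpha_p$) that is invisible to the $\ell$-adic arguments but genuinely affects $p$-torsion, and the reduced divisor $D$ faithfully records the gluing only if no inseparable or non-reduced information is lost along the conductor. This is exactly where separable weak normality is indispensable: it forces the relative Picard scheme attached to the conductor $\wt{C}\to C$ to be an extension without unipotent or $\alpha_p$-part, so that the kernel and cokernel of $\theta$ are generated by reduced conductor data whose $p$-torsion I can match on both sides. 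Concretely, I would analyze the conductor square on flat (equivalently logarithmic de Rham--Witt) cohomology and show that the relative $H^1$ furnishing the torus part of $A^2(X)$ carries no $p$-torsion, since a torus has no $p$-torsion points in characteristic $p$; hence every $p$-torsion class in $A^2(X)$ descends from its abelian quotient, where the smooth-surface case applies, and is hit by $\rho_X$. Controlling this flat-cohomological conductor term and matching it, compatibly, with the cycle-theoretic comparison is the step I expect to be the main obstacle.
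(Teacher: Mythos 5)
Your reduction has a genuine gap at its core: the comparison map $\theta\colon \CH_0(\wt{X}|\wt{C})_{\deg 0}\to \CH_0(X)_{\deg 0}$ is not an isomorphism for a general separably weakly normal surface, so the theorem cannot be routed through it. The double $S_Y=S(Y,F)$ of a smooth projective surface $Y$ along a reduced divisor $F$ is exactly a scheme of the type covered by the statement; its normalization is $Y\amalg Y$ with reduced conductor $F\amalg F$, so your $\theta$ would be a map $\CH_0(Y|F)^{\oplus 2}\to \CH_0(S_Y)$, while the decomposition theorem (\thmref{thm:BK-Main}) identifies the target with $\CH_0(Y|F)\oplus \CH_0(Y)$. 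These agree only when $\CH_0(Y|F)\simeq\CH_0(Y)$, which fails as soon as the modulus condition is nontrivial; the same phenomenon is already visible for the double of $\P^1$ along $\{0,\infty\}$, where the source is $k^\times\times k^\times$ but $\Pic^0$ of the target is a single $k^\times$. So the ``tautological identification away from $D$'' does not descend to rational equivalence, and the appeal to universal properties cannot repair an identification that is false at the level of Chow groups. There is also a structural circularity: in this paper \thmref{thm:Intro-2} (and even the construction of $A^d(X|D)$, which rests on \thmref{cor:LW-lci-join}) is \emph{deduced from} the separably weakly normal surface case via \lemref{lem:Bertini-Roitman} and \thmref{thm:RT-join}, so you would be assuming a downstream consequence of the statement you are proving.

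The paper's proof (\thmref{thm:Fin-dim-main}) goes in the opposite direction and concentrates the $p$-torsion difficulty elsewhere. One reduces along the normalization $f\colon\ov{X}\to X$: by \cite{KS} the Roitman theorem (including $p$-torsion) holds for the normal surface $\ov{X}$, so it suffices to show that $A=\ker(\CH_0(X)\surj\CH_0(\ov{X}))$ is uniquely $p$-divisible. This is done via the $K$-theoretic excision sequence of \lemref{lem:WN-0}, $SK_1(\ov{X})\oplus SK_1(Y)\to SK_1(\ov{Y})\to A\to 0$, whose validity is precisely where separable weak normality enters: the obstruction is the birelative group $K_0(X,\ov{X},Y)\simeq H^1(Y,{\sI_Y}/{\sI_Y^2}\otimes\Omega^1_{\ov{Y}/Y})$, which vanishes because $\ov{Y}\to Y$ is generically {\'e}tale. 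One then concludes from divisibility properties of $SK_1$ of reduced curves and the vanishing of $SK_1(\ov{X})\otimes{\Q_p}/{\Z_p}$ for a normal surface. Your final paragraph correctly identifies the $p$-torsion as the crux, but the flat-cohomology/de Rham--Witt analysis you sketch is neither carried out nor needed once the problem is transferred to $SK_1$ in this way.
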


Note that \thmref{thm:Intro-2} makes no assumption on $\dim(X)$ but this is not
the case for \thmref{thm:Intro-3}. The reason for this is that our proof of
\thmref{thm:Intro-2} uses a weaker version of the Lefschetz type theorem
for separably weakly normal varieties. We do not know if
a general hyperplane section of an arbitrary separably weakly normal variety
is separably weakly normal. This is known to be false for
weakly normal varieties in positive characteristic (see \cite{CGM*}).
We shall investigate this question in a future project.

\vskip .3cm

\subsection{Applications}
As an application of \thmref{thm:Intro-1}, we obtain the following 
result about the prime-to-$p$ torsion in the Chow group with modulus.

\begin{cor}\label{cor:Intro-1*}
Let $k$ be an algebraically closed field of exponential characteristic $p$.
Let $X$ be a smooth projective scheme of dimension $d \ge 1$ over $k$ and
let $D \subset X$ be an effective Cartier divisor.
Then, the restriction map ${_{n}{\CH_0(X|D)}} \to {_{n}{\CH_0(X|D_{\rm red})}}$ 
is an isomorphism for every integer $n$ prime to $p$.
\end{cor}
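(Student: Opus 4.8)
The plan is to deduce Corollary~\ref{cor:Intro-1*} directly from Bloch's torsion theorem (Theorem~\ref{thm:Intro-1}) by comparing the relative \'etale cohomology groups attached to the two divisors $D$ and $D_{\rm red}$. For a fixed integer $n$ prime to $p$, write $n = l_1^{a_1}\cdots l_r^{a_r}$ and work one prime $l \neq p$ at a time; since ${}_n \CH_0(X|D)$ is the direct sum of its $l$-primary pieces ${}_{l^{a}}\CH_0(X|D) \subset \CH_0(X|D)\{l\}$, it suffices to prove that the natural restriction map induces an isomorphism on $\CH_0(X|D)\{l\} \to \CH_0(X|D_{\rm red})\{l\}$ compatibly with the $n$-torsion filtration. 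Applying Theorem~\ref{thm:Intro-1} to both $(X,D)$ and $(X,D_{\rm red})$, this reduces the assertion to showing that the restriction map on relative \'etale cohomology
\[
H^{2d-1}_{\etl}(X|D, \Q_l/\Z_l(d)) \to H^{2d-1}_{\etl}(X|D_{\rm red}, \Q_l/\Z_l(d))
\]
is an isomorphism, after checking that the isomorphisms $\lambda_{X|D}$ and $\lambda_{X|D_{\rm red}}$ are compatible with the pushforward/restriction maps induced by the inclusion $D_{\rm red}\hookrightarrow D$ of divisors with the same support.

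First I would make precise the naturality of $\lambda_{X|D}$ in the divisor: the cycle-theoretic restriction map ${}_{n}\CH_0(X|D)\to {}_{n}\CH_0(X|D_{\rm red})$ arises because a $0$-cycle with modulus $D$ automatically has modulus $D_{\rm red}$ (the modulus condition along $D$ is stronger), so there is a canonical map $\CH_0(X|D)\to \CH_0(X|D_{\rm red})$, and I would verify from the construction of $\lambda$ that the square with the corresponding restriction on relative \'etale cohomology commutes. Granting this, the heart of the matter is the behaviour of the relative \'etale cohomology under passing from $D$ to $D_{\rm red}$. I expect the relative complex $\Z(d)^{\mathcal{D}^*}$ defining $H^*_{\etl}(X|D,-)$ to fit in a localization/fiber sequence relating $X$, the open complement $U = X\setminus D_{\rm red} = X \setminus D$, and the divisor; since $D$ and $D_{\rm red}$ have the \emph{same underlying topological space} and hence define the same open $U$, the only possible discrepancy between the two relative cohomologies is concentrated on infinitesimal thickening data along the support.

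The key step is therefore to show that with $\Q_l/\Z_l$-coefficients and $l \neq p$, this infinitesimal thickening contributes nothing: the relevant comparison is governed by the difference of structure sheaves $\cO_D$ versus $\cO_{D_{\rm red}}$, whose kernel is a sheaf of $\cO_X$-modules supported on $D_{\rm red}$ and thus $p$-primary torsion-free only in the \emph{additive} (unipotent) direction. Concretely, the nilpotent thickening affects the relative cohomology only through groups built out of the nilradical $\mathcal{N}_D = \ker(\cO_D \surj \cO_{D_{\rm red}})$, and such contributions are uniquely $l$-divisible (equivalently, $l$-torsion-free and $l$-divisible) for every $l \neq p$ because a coherent sheaf of $\cO_X$-modules is a $\Q$-vector space after the relevant comparison in the prime-to-$p$ part. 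Hence the restriction map is an isomorphism on $\Q_l/\Z_l(d)$-cohomology. I expect this \textbf{invariance of the prime-to-$p$ relative \'etale cohomology under nilpotent thickening of the divisor} to be the main obstacle: it is the one place where one must genuinely use that the coefficients are prime to the characteristic, and where the precise definition of the relative complex $\Z(d)^{\mathcal{D}^*}_X$ (and its compatibility with the localization sequence and with the rigidity/torsion computations underlying Theorem~\ref{thm:Intro-1}) enters in an essential way. Once this vanishing is established, combining it with the naturality square and Theorem~\ref{thm:Intro-1} for both divisors yields the isomorphism on $n$-torsion and completes the proof.
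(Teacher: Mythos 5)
Your overall strategy is the paper's: apply Theorem~\ref{thm:Intro-1} to both $(X,D)$ and $(X,D_{\rm red})$, reduce to showing that the restriction map $H^{2d-1}_{\etl}(X|D, {\Q_l}/{\Z_l}(d)) \to H^{2d-1}_{\etl}(X|D_{\rm red}, {\Q_l}/{\Z_l}(d))$ is an isomorphism, and work one prime $l \neq p$ at a time. The gap is in your justification of that comparison step, which you (correctly) flag as the heart of the matter but then argue for by the wrong mechanism. The relative group $H^*_{\etl}(X|Y,\sF)$ is defined as the hypercohomology of the cone of $\sF|_X \to f_*(\sF|_Y)$, so by the long exact sequence ~\eqref{eqn:rel-et} and the five lemma the comparison between $D$ and $D_{\rm red}$ reduces to the single map $H^*_{\etl}(D, \mu_n(j)) \to H^*_{\etl}(D_{\rm red}, \mu_n(j))$. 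That map is an isomorphism for purely formal reasons: a nilpotent closed immersion induces an equivalence of {\'e}tale sites (topological invariance of the {\'e}tale topology), so {\'e}tale cohomology with \emph{any} abelian sheaf coefficients is insensitive to the thickening. No analysis of the nilradical $\ker(\sO_D \to \sO_{D_{\rm red}})$ and no divisibility argument is available or needed here --- {\'e}tale cohomology of $\mu_n$ does not see the structure sheaf, so the mechanism you describe (contributions ``built out of the nilradical'' being uniquely $l$-divisible because coherent sheaves become $\Q$-vector spaces) is not how the comparison works. In particular the prime-to-$p$ hypothesis plays no role in this step; it is needed only so that Theorem~\ref{thm:Intro-1} applies to both sides. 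Likewise, there is no ``relative motivic complex $\Z(d)^{\mathcal{D}^*}$'' entering the construction: $\lambda_{X|D}$ is built from the double $S(X,D)$ and ordinary {\'e}tale cohomology with $\mu_{l^m}$ coefficients. With the nil-invariance step corrected, and the naturality of $\lambda_{X|D}$ in the divisor checked as you propose, your argument becomes the paper's proof of Theorem~\ref{thm:nil-inv}.
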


In \cite[Theorem~1.3]{Miyazaki}, an isomorphism of similar indication
has been very recently
shown after inverting ${\rm char}(k)$. But it has no implication on
\corref{cor:Intro-1*}.
Another application of \thmref{thm:Intro-1} is the following extension of the
results of Bloch \cite[Chap.~5]{Bloch-duke} to
the relative $K$-theory.

\begin{cor}\label{cor:Intro-3}
Let $X$ be a smooth projective surface over an algebraically closed
field $k$ of exponential characteristic $p$ and let $l \neq p$ be a
prime. Let $D \subset X$ be an effective Cartier divisor. Then, the following 
hold.
\begin{enumerate}
\item
$H^1(X, \sK_{2, (X,D)})\otimes_{\Z} {\Q_l}/{\Z_l} = 0$.
\item
$H^2(X, \sK_{2, (X,D)})\{l\} \simeq H^3_{\etl}(X|D, {\Q_l}/{\Z_l}(2))$.
\end{enumerate}
\end{cor}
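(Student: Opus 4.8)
The plan is to deduce both assertions from a single short exact sequence obtained by transporting Bloch's argument in \cite[Chap.~5]{Bloch-duke} to the relative setting and then invoking \thmref{thm:Intro-1} with $d = 2$. The inputs I would need are: (i) a relative Bloch formula $H^2(X, \sK_{2, (X,D)}) \cong \CH_0(X|D)$, arising from a Gersten-type resolution of the relative $K$-sheaf on the smooth surface $X$; (ii) a relative Merkurjev--Suslin isomorphism identifying the mod-$l^n$ relative $K$-sheaf with the relevant relative {\'e}tale sheaf $\sH^2$ with $\Z/l^n(2)$-coefficients; and (iii) a relative Bloch--Ogus coniveau spectral sequence converging to $H^*_{\etl}(X|D, \Z/l^n(2))$. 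Given (i), part (2) is immediate: applying $\{l\}$ to the isomorphism in (i) and composing with $\lambda_{X|D}$ yields $H^2(X, \sK_{2, (X,D)})\{l\} \cong \CH_0(X|D)\{l\} \xrightarrow{\simeq} H^3_{\etl}(X|D, \Q_l/\Z_l(2))$.

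For part (1), I would run the relative coniveau spectral sequence for the surface $X$. The Bloch--Ogus vanishing $E_2^{p,q} = 0$ for $p > q$, together with the vanishing $\sH^3 = 0$ (which uses that $k(X)$ has {\'e}tale cohomological dimension $2$), should collapse the contributions to show that $H^1(X, \sK_{2, (X,D)}/l^n) \cong H^3_{\etl}(X|D, \Z/l^n(2))$. Inserting this into the Bockstein sequence attached to the coefficient reduction modulo $l^n$, and using (i) to identify $H^2(X, \sK_{2, (X,D)}) \cong \CH_0(X|D)$, gives
\[
0 \to H^1(X, \sK_{2, (X,D)})/l^n \to H^3_{\etl}(X|D, \Z/l^n(2)) \to {}_{l^n}\CH_0(X|D) \to 0.
\]
Passing to the colimit over $n$, with transition maps induced by multiplication by $l$, yields the fundamental sequence
\[
0 \to H^1(X, \sK_{2, (X,D)}) \otimes_{\Z} \Q_l/\Z_l \to H^3_{\etl}(X|D, \Q_l/\Z_l(2)) \to \CH_0(X|D)\{l\} \to 0.
\]

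An abelian group $A$ satisfies $A \otimes_{\Z} \Q_l/\Z_l = 0$ precisely when it is $l$-divisible, so part (1) is equivalent to the vanishing of the left-hand term. This holds because, by the way $\lambda_{X|D}$ is constructed through the same relative Bloch--Ogus formalism, the right-hand surjection is inverse to the isomorphism of \thmref{thm:Intro-1}; an exact sequence whose surjection is an isomorphism has trivial kernel. The main obstacle I anticipate is setting up step (iii): one must construct the relative coniveau spectral sequence and prove the relative purity and semi-purity statements underlying the Bloch--Ogus vanishing for the pair $(X,D)$, then verify that its edge map is compatible with the cycle-class map defining $\lambda_{X|D}$. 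A second delicate point is controlling the $l^n$-torsion of the relative $K$-sheaf so that the Bockstein sequence above is genuinely short exact, since the terms of the Gersten complex carry $\mu_{l^n}$-torsion and multiplication by $l^n$ need not be injective on the nose.
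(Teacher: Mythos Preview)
Your argument for part (2) matches the paper's: combine the relative Bloch formula $\CH_0(X|D)\xrightarrow{\simeq} H^2(X,\sK_{2,(X,D)})$ (which the paper cites from \cite[Theorem~1.7]{BK} and \cite[Lemma~2.1]{Krishna-3}) with \thmref{thm:Intro-1}.

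For part (1), however, the paper takes a genuinely different route that bypasses the obstacle you correctly flag. You aim to build a relative Bloch--Ogus coniveau spectral sequence for the pair $(X,D)$ and a relative Merkurjev--Suslin isomorphism, then check that the resulting edge map agrees with $\lambda_{X|D}$. The paper does none of this. Instead, it works on the singular double $S_X=X\amalg_D X$: \thmref{thm:K-2-coh-0} already gives the short exact sequence
\[
0\to H^1(Y,\sK_2)\otimes\Z/n\to H^3_{\etl}(Y,\mu_n(2))\to {}_n\CH_0(Y)\to 0
\]
for \emph{any} reduced quasi-projective surface $Y$; applying it to $Y=S_X$ and $Y=X$ and using the splitting $\Delta\circ\iota_-={\rm Id}_X$ produces a compatible three-column diagram whose top row is the desired relative sequence with $H^1(S_X,\sK_{2,(S_X,X_-)})\otimes\Q_l/\Z_l$ on the left. \thmref{thm:Intro-1} then kills this term. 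A separate step, using the Geller--Weibel computation $\sK_{1,(S_X,X_-,D)}\simeq \sI_D/\sI_D^2\otimes\Omega^1_{D/X}=0$, shows that $\iota_+^*\colon H^1(S_X,\sK_{2,(S_X,X_-)})\to H^1(X,\sK_{2,(X,D)})$ is surjective, which transfers the vanishing to the group you want.

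The specific gap in your proposal is the compatibility claim at the end: you assert that the surjection in your displayed sequence is inverse to $\lambda_{X|D}$ ``by the way $\lambda_{X|D}$ is constructed through the same relative Bloch--Ogus formalism.'' But in this paper $\lambda_{X|D}$ is \emph{not} built that way; it is defined via the double and an induction on dimension using Gysin maps and the Lefschetz-type isomorphisms of \S\ref{sec:WLD} (see the proof of \thmref{thm:Torsion-Bloch}). So the compatibility you need is unavailable without further work, and the relative coniveau machinery you would need to supply it is precisely what the paper's detour through $S_X$ is designed to avoid.
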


Analogous results are also obtained for surfaces with arbitrary singularities
(see Theorems~\ref{thm:K-2-coh-0} and ~\ref{thm:Cyc-iso}).

\subsection{Outline of proofs}
Our proofs of the above results are based on the decomposition theorem
of \cite{BK} which relates the Chow group of 0-cycles with modulus on
a smooth variety to the Levine-Weibel Chow group of 0-cycles on a 
singular variety. Using this decomposition theorem, our main task
becomes extending the torsion results of Bloch and Roitman-Milne to
singular varieties.
We prove these results for the Chow group of 0-cycles on singular surfaces
in the first part (\S~\ref{sec:K-2-surf} and \S~\ref{sec:finite-dim})
of this text. These results are of independent interest
in the study of 0-cycles on singular varieties.

In \S~\ref{sec:TEC} and ~\S~\ref{sec:RTM}, we prove our results about 
0-cycles with modulus by using the analogous results for singular surfaces
and an induction on dimension. This induction is based on some variants of 
the Lefschetz hyperplane section theorem for the Albanese variety and the
weak Lefschetz theorem for the {\'e}tale cohomology with modulus. 
Some applications are deduced in \S~\ref{sec:TEC}.

\subsection{Notations}
Throughout this text (except in \S~\ref{sec:WN}), 
$k$ will denote an algebraically
closed field of exponential characteristic $p \ge 1$. We shall denote the
category of separated schemes of finite type over $k$ by $\Sch_k$. 
The subcategory of $\Sch_k$ consisting of smooth schemes
over $k$ will be denoted by $\Sm_k$. 
An undecorated product $X \times Y$ in $\Sch_k$ will mean that it is
taken over the base scheme $\Spec(k)$. An undecorated cohomology group 
will be assumed to be with respect to the Zariski site.
We shall let $w: {\Sch_k}/{\etl} \to {\Sch_k}/{\zar}$
denote the canonical morphism between the {\'e}tale and the Zariski
sites of $\Sch_k$. For $X \in \Sch_k$, the notation $X^N$ will usually mean the
normalization of $X_{\rm red}$ unless we use a different notation in a specific
context.
For any abelian group $A$ and an integer $n \ge 1$, the subgroup of
$n$-torsion elements in $A$ will be denoted by ${_{n}A}$ and, for any prime
$l$, the $l$-primary torsion subgroup will be denoted by
$A\{l\}$.

\section{Review of 0-cycle groups and Albanese varieties}
\label{sec:cyc-alb}
To prove the main results of \S~\ref{sec:Intro},
our strategy is to use the decomposition theorem for 0-cycles from \cite{BK}. 
This theorem asserts that the Chow group of 0-cycles with modulus
on a smooth scheme is a direct summand of the Chow group of 0-cycles 
(in the sense of \cite{LW}) on a singular scheme. We therefore need to prove
analogues of Theorems~\ref{thm:Intro-1} and ~\ref{thm:Intro-2} for
singular schemes. In the first few sections of this paper, our goal is
to prove such results for arbitrary singular surfaces. These
results are of independent interest and answer some questions
in the study of 0-cycles on singular schemes. The higher dimensional cases of
Theorems~\ref{thm:Intro-1} and ~\ref{thm:Intro-2} will be deduced from the
case of surfaces. 

In this section, we review the definitions and some
basic facts about the Chow group of 0-cycles on singular schemes,
the Chow group with modulus and their universal quotients, called the
Albanese varieties.

\subsection{0-cycles on singular schemes}\label{sec:Chow-sing}
We recall the definition of the cohomological Chow group of 0-cycles for
singular schemes from \cite{BK} and \cite{LW}. 
Let $X$ be a reduced quasi-projective
scheme of dimension $d \ge 1$ over $k$. Let $X_{\rm sing}$ and $X_{\rm reg}$
denote the loci of the singular and the regular points of $X$.
Given a nowhere dense closed subscheme $Y \subset X$ such that 
$X_{\rm sing} \subseteq Y$
and no component of $X$ is contained in $Y$, we let $\sZ_0(X,Y)$ denote
the free abelian group on the closed points of $X \setminus Y$.
We write $\sZ_0(X, X_{\rm sing})$ in short as $\sZ_0(X)$.

\begin{defn}\label{defn:0-cycle-S-1}
Let $C$ be a pure dimension one reduced scheme in $\Sch_k$. 
We shall say that a pair $(C, Z)$ is \emph{a good curve
relative to $X$} if there exists a finite morphism $\nu\colon C \to X$
and a  closed proper subscheme $Z \subsetneq C$ such that the following hold.
\begin{enumerate}
\item
No component of $C$ is contained in $Z$.
\item
$\nu^{-1}(X_{\rm sing}) \cup C_{\rm sing}\subseteq Z$.
\item
$\nu$ is local complete intersection at every 
point $x \in C$ such that $\nu(x) \in X_{\rm sing}$. 
\end{enumerate}
\end{defn}

Let $(C, Z)$ be a good curve relative to $X$ and let 
$\{\eta_1, \cdots , \eta_r\}$ be the set of generic points of $C$. 
Let $\sO_{C,Z}$ denote the semilocal ring of $C$ at 
$S = Z \cup \{\eta_1, \cdots , \eta_r\}$.
Let $k(C)$ denote the ring of total
quotients of $C$ and write $\sO_{C,Z}^\times$ for the group of units in 
$\sO_{C,Z}$. Notice that $\sO_{C,Z}$ coincides with $k(C)$ 
if $|Z| = \emptyset$. 
As $C$ is Cohen-Macaulay, $\sO_{C,Z}^\times$  is the subgroup of $k(C)^\times$ 
consisting of those $f$ which are regular and invertible in the local rings 
$\sO_{C,x}$ for every $x\in Z$. 

Given any $f \in \sO^{\times}_{C, Z} \inj k(C)^{\times}$, we denote by  
${\rm div}_C(f)$ (or ${\rm div}(f)$ in short) 
the divisor of zeros and poles of $f$ on $C$, which is defined as follows. If 
$C_1,\ldots, C_r$ are the irreducible components of $C$, 
and $f_i$ is the factor of $f$ in $k(C_i)$, we set 
${\rm div}(f)$ to be the $0$-cycle $\sum_{i=1}^r {\rm div}(f_i)$, where 
${\rm div}(f_i)$ is the usual 
divisor of a rational function on an integral curve in the sense of
\cite{Fulton}. 
As $f$ is an invertible 
regular function on $C$ along $Z$, ${\rm div}(f)\in \sZ_0(C,Z)$.

By definition, given any good curve $(C,Z)$ relative to $X$, we have a 
push-forward map $\sZ_0(C,Z)\xrightarrow{\nu_{*}} \sZ_0(X)$.
We shall write $\sR_0(C, Z, X)$ for the subgroup
of $\sZ_0(X)$ generated by the set 
$\{\nu_*({\rm div}(f))| f \in \sO^{\times}_{C, Z}\}$. 
Let $\sR_0(X)$ denote the subgroup of $\sZ_0(X)$ generated by 
the image of the map $\sZ_0(C, Z, X) \to \sZ_0(X)$, where
$(C, Z)$ runs through all good curves relative to $X$.
We let $\CH_0(X) = \frac{\sZ_0(X)}{\sR_0(X)}$.

If we let $\sR^{LW}_0(X)$ denote the subgroup of $\sZ_0(X)$ generated
by the divisors of rational functions on good curves as above, where
we further assume that the map $\nu: C \to X$ is a closed immersion,
then the resulting quotient group ${\sZ_0(X)}/{\sR^{LW}_0(X)}$ is
denoted by $\CH^{LW}_0(X)$. Such curves on $X$ are called the 
{\sl Cartier curves}. There is a canonical surjection
$\CH^{LW}_0(X) \surj \CH_0(X)$. The Chow group $\CH^{LW}_0(X)$ was
discovered by Levine and Weibel \cite{LW} in an attempt to describe the
Grothendieck group of a singular scheme in terms of algebraic cycles.
The modified version $\CH_0(X)$ was introduced in \cite{BK}.
As an application of our main results, we shall be able to identify
the two Chow groups for certain singular schemes (see \thmref{cor:LW-lci-join}).

\subsection{The Chow group of the double}
\label{sec:double}
Let $X$ be a smooth quasi-projective scheme of dimension $d$ over $k$
and let $D \subset X$ be an effective Cartier divisor. Recall from 
\cite[\S~2.1]{BK} that the double of $X$ along $D$ is a quasi-projective
scheme $S(X,D) = X \amalg_D X$ so that
\begin{equation}\label{eqn:rel-et-2}
\xymatrix@C1pc{
D \ar[r]^-{\iota} \ar[d]_{\iota} & X \ar[d]^{\iota_+} \\
X \ar[r]_-{\iota_-} & S(X,D)}
\end{equation}
is a co-Cartesian square in $\Sch_k$.
In particular, the identity map of $X$ induces a finite map
$\Delta:S(X,D) \to X$ such that $\Delta \circ \iota_\pm = {\rm Id}_X$
and $\pi = \iota_+ \amalg \iota_-: X \amalg X \to S(X,D)$ 
is the normalization map.
We let $X_\pm = \iota_\pm(X) \subset S(X,D)$ denote the two irreducible
components of $S(X,D)$.  We shall often write $S(X,D)$ as $S_X$
when the divisor $D$ is understood. $S_X$ is a reduced quasi-projective
scheme whose singular locus is $D_{\rm red} \subset S_X$. 
It is projective whenever
$X$ is so. The following easy algebraic lemma shows that ~\eqref{eqn:rel-et-2}
is also a Cartesian square.

\begin{lem}\label{lem:rel-et-2*}
Let 
\begin{equation}\label{eqn:rel-et-2*-0}
\xymatrix@C1pc{
R \ar[r]^-{\psi_1} \ar[d]_{\psi_2} & A_1 \ar@{->>}[d]^{\phi_1} \\
A_2 \ar@{->>}[r]^-{\phi_2} & B}
\end{equation}
be a Cartesian square of commutative Noetherian rings 
such that ${\rm Ker}(\phi_i) = (a_i)$ for $i = 1,2$. Then
$A_1 \otimes_R A_2 \xrightarrow{\simeq} B$.
\end{lem}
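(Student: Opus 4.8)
The plan is to interpret the Cartesian hypothesis concretely as $R = A_1\times_B A_2 = \{(x,y)\in A_1\times A_2 \mid \phi_1(x)=\phi_2(y)\}$, with $\psi_1,\psi_2$ the two projections, and to recognize the asserted isomorphism as the multiplication map $\mu\colon A_1\otimes_R A_2\to B$, $a\otimes a'\mapsto \phi_1(a)\phi_2(a')$. This is the canonical map induced by $\phi_1$ and $\phi_2$ on the pushout, and it is well defined precisely because the square commutes, i.e. $\phi_1\circ\psi_1=\phi_2\circ\psi_2$. I would then show $\mu$ is an isomorphism by rewriting the tensor product as an explicit quotient of $R$.

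First I would note that both projections $\psi_i$ are surjective: given $x\in A_1$, the surjectivity of $\phi_2$ produces $y\in A_2$ with $\phi_2(y)=\phi_1(x)$, so $(x,y)\in R$ and $\psi_1(x,y)=x$; the case of $\psi_2$ is symmetric. Hence $A_i\cong R/{\rm Ker}(\psi_i)$, and the standard isomorphism $R/I\otimes_R R/J\cong R/(I+J)$ gives $A_1\otimes_R A_2\cong R/({\rm Ker}(\psi_1)+{\rm Ker}(\psi_2))$. Computing inside $R\subseteq A_1\times A_2$, an element $(0,y)\in R$ must satisfy $\phi_2(y)=\phi_1(0)=0$, so ${\rm Ker}(\psi_1)=\{0\}\times {\rm Ker}(\phi_2)$ and likewise ${\rm Ker}(\psi_2)={\rm Ker}(\phi_1)\times\{0\}$. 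Therefore ${\rm Ker}(\psi_1)+{\rm Ker}(\psi_2)={\rm Ker}(\phi_1)\times {\rm Ker}(\phi_2)$, which indeed lies in $R$.

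Finally I would analyze the ring map $q\colon R\to B\times B$, $(x,y)\mapsto(\phi_1(x),\phi_2(y))$. Its kernel is exactly ${\rm Ker}(\phi_1)\times {\rm Ker}(\phi_2)$, while its image is the diagonal copy of $B$: the defining relation $\phi_1(x)=\phi_2(y)$ forces the image into the diagonal, and surjectivity of both $\phi_i$ shows every $(b,b)$ is hit. Thus $R/({\rm Ker}(\phi_1)\times {\rm Ker}(\phi_2))\cong B$, and tracing the class of a pair $(x,y)\in R$ through the two identifications (both send it to $\phi_1(x)$) confirms that this isomorphism is the multiplication map $\mu$. I expect the only real obstacle to be bookkeeping rather than anything deep: one must use the Cartesian hypothesis precisely where it is needed—for the surjectivity of the $\psi_i$ and for the kernel identification—and then check that the abstract quotient isomorphism literally equals $\mu$. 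It is worth noting that the principality of ${\rm Ker}(\phi_i)=(a_i)$ is not used anywhere; only the surjectivity of the $\phi_i$ matters, the principality being merely the form in which the hypothesis arises from the Cartier divisor in the intended application.
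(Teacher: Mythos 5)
Your proof is correct and follows essentially the same route as the paper's: both arguments identify the kernels of the projections $\psi_i$ explicitly inside $R\subseteq A_1\times A_2$ and reduce the tensor product to a quotient computation (the paper writes ${\rm Ker}(\psi_1)$ and ${\rm Ker}(\psi_2)$ as the principal ideals generated by $(0,a_2)$ and $(a_1,0)$ and computes $R/(\alpha_1)\otimes_R A_2\simeq A_2/(a_2)=B$ in one base-change step, while you collapse both factors at once via $R/({\rm Ker}\,\psi_1+{\rm Ker}\,\psi_2)$ and identify that quotient with $B$ directly). Your closing observation is also accurate: the principality of ${\rm Ker}(\phi_i)$ is only a notational convenience in the paper's version, and the argument really uses nothing beyond the surjectivity of the $\phi_i$.
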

\begin{proof}
If we let $\alpha_1 = (0, a_2)$ and $\alpha_2 = (a_1, 0)$ in $R \subset
A_1 \times A_2$, then it is easy to check that for $i =1,2$, the map
$\psi_i$ is surjective and ${\rm Ker}(\psi_i) = (\alpha_i)$.
This implies that $A_1 \otimes_{R} A_2 \simeq {R}/{(\alpha_1)} \otimes_R 
A_2 \simeq {A_2}/{(a_2)} = B$.
\end{proof}

We shall later consider a more general situation than
~\eqref{eqn:rel-et-2} where
we allow the two components of the double to be distinct. This
general case is a non-affine version of ~\eqref{eqn:rel-et-2*-0} and 
is used in the proof of \thmref{thm:Intro-2}.

\subsection{The Albanese varieties for singular schemes}
\label{sec:alb-s}
Let $X$ be a reduced projective scheme over $k$ of dimension $d$.
Let $\CH^{LW}_0(X)_{\deg 0}$ denote the kernel of the degree map
${\deg}: \CH^{LW}_0(X) \surj H^0(X, \Z)$.
An Albanese variety $A^d(X)$ of $X$ was constructed in \cite{ESV}. This 
variety is a connected commutative algebraic group over $k$ with an
Abel-Jacobi map $\rho_X: \CH^{LW}_0(X)_{\deg 0} \surj A^d(X)$. Moreover,
$A^d(X)$ is the universal object in the category of commutative algebraic
groups $G$ over $k$ with a rational map $f: X \dashrightarrow G$
which induces a group homomorphism $f_*:\CH^{LW}_0(X)_{\deg 0} \to G$.
Such a group homomorphism is called a regular homomorphism.
For this reason, $A^d(X)$ is also called the {\sl universal regular quotient}
of $\CH^{LW}_0(X)_{\deg 0}$. When $X$ is smooth, $A^d(X)$ coincides with the
classical Albanese variety $\Alb(X)$. 
One says that $\CH^{LW}_0(X)_{\deg 0}$ is {\sl finite-dimensional}, if 
$\rho_{X}$ is an isomorphism. 

\subsection{The universal semi-abelian quotient of 
$\CH^{LW}_0(X)_{\deg 0} $}\label{sec:SAQ}
In positive characteristic, apart from the theorem about its existence,
not much is known about $A^d(X)$ and almost everything
that one would like to know about it is an open question. 
However, the universal semi-abelian quotient of the algebraic group
$A^d(X)$ can be described more explicitly and this is all one needs
to know in order to understand the prime-to-$p$ torsion in $A^d(X)$.
The following description of this quotient 
is recalled from \cite[\S~2]{Mallick}. 

Let $\pi: X^N \to X$ denote the normalization map. Let ${\rm Cl}(X^N)$
and $\Pic_W(X^N)$ denote the class group and the Weil Picard group of
$X^N$. Recall from \cite{Weil} that $\Pic_W(X^N)$ is the subgroup of
${\rm Cl}(X^N)$ consisting of Weil divisors which are algebraically
equivalent to zero in the sense of \cite[Chap.~19]{Fulton}.
Let ${\rm Div}(X)$ denote the free abelian group of Weil divisors on $X$.
Let $\Lambda_{1}(X)$ denote the subgroup of ${\rm Div}(X^N)$ generated
by the Weil divisors which are supported on $\pi^{-1}(X_{\rm sing})$.
This gives us a map $\iota_X: \Lambda_{1}(X) \to 
\frac{{\rm Cl}(X^N)}{\Pic_W(X^N)}$.

Let $\Lambda(X)$ denote the kernel of the canonical map
\begin{equation}\label{eqn:Weil-Pic}
\Lambda_{1}(X) \xrightarrow{(\iota_X, \pi_*)} \frac{{\rm Cl}(X^N)}{\Pic_W(X^N)}
\oplus {\rm Div}(X).
\end{equation}

It follows from \cite[\S~4]{Mallick} that
the universal semi-abelian quotient of $A^d(X)$ is the Cartier dual of
the 1-motive $[\Lambda(X) \to \Pic_W(X^N)]$ and is denoted by $J^d(X)$.
The composite homomorphism
$\rho^{\rm semi}_X: \CH_0^{LW}(X)_{\deg 0} \surj A^d(X) \surj J^d(X)$ is 
universal among regular homomorphisms from $ \CH_0^{LW}(X)_{\deg 0}$
to semi-abelian varieties. $J^d(X)$ is called the {\sl semi-abelian Albanese
variety} of $X$. By \cite[Proposition~9.7]{BK}, there is a factorization
of $\rho^{\rm semi}_X$:
\begin{equation}\label{eqn:alb-lci}
\CH_0^{LW}(X)_{\deg 0} \surj \CH_0(X)_{\deg 0} 
\xrightarrow{{\wt{\rho}}^{\rm semi}_X} J^d(X)
\end{equation}
and $J^d(X)$ is in fact a universal regular semi-abelian variety quotient of 
$\CH_0(X)_{\deg 0}$.

There is a short exact sequence of commutative algebraic 
groups
\begin{equation}\label{eqn:alb-semi}
0 \to A^d_{\rm unip}(X) \to A^d(X) \to J^d(X) \to 0,
\end{equation}
where $A^d_{\rm unip}(X)$ is the unipotent radical of $A^d(X)$.
Since $k$ is algebraically closed, $A^d_{\rm unip}(X)$ is uniquely $n$-divisible
for any integer $n \in k^{\times}$. In particular,  
\begin{equation}\label{eqn:Div-0-3}
{_{n}A^d(X)} \xrightarrow{\simeq} {_{n}J^d(X)}.
\end{equation}

\subsection{A Lefschetz hyperplane theorem}\label{sec:Lef}
Recall that for a smooth projective scheme $X \inj \P^N_k$ of dimension 
$d \ge 3$, a general hypersurface section $Y \subset X$ has the
property that the canonical map $\Alb(Y) \to \Alb(X)$ is an isomorphism.
We now wish to prove a similar result for the semi-abelian Albanese
variety $J^d(S_X)$ of the double. 

Let $X$ be a connected smooth projective scheme of dimension $d \ge 3$ over 
$k$ and let $D \subset X$ be an effective Cartier divisor.
Let $\{E_1, \cdots , E_r\}$ be the set of irreducible components of $D_{\rm red}$
so that each $E_i$ is integral. For each $1 \le j \le r$, let 
$P_j \in E_j \setminus (\cup_{j' \neq j} E_{j'})$ be a chosen closed point,
and let $S = \{P_1, \cdots , P_r\}$.
We can apply \cite[Theorems~1, 7]{KL}
to find a closed embedding $X \inj \P^N_k$ (for $N \gg 0$)
such that for a general set of 
distinct hypersurfaces $H_1, \cdots , H_{d-2} \inj \P^N_k$, we have the
following.
\begin{equation}\label{eqn:Bertini}
\end{equation}
\begin{enumerate}
\item
The successive intersection $X \cap H_1 \cap \cdots \cap H_i$
($1 \le i \le d-2$) is integral and smooth over $k$.
\item
No component of $X \cap  H_1 \cap \cdots \cap H_{d-2}$ is contained in $D$.
\item 
$S \subset X \cap  H_1 \cap \cdots \cap H_{d-2}$.
\item
For each $1 \le j \le r$, the successive intersection 
$E_j \cap H_1 \cap \cdots \cap H_i$ ($1 \le i \le d-2$) is integral.
\end{enumerate}

Setting $X_i = X \cap H_1 \cap \cdots \cap H_i$ and $D_i = X_i \cap D$,
our choice of the hypersurfaces implies that $D_i \subset X_i$ is an
effective Cartier divisor. We let $S_i = X_i \amalg_{D_i} X_i$
for $1 \le i \le d-2$. It follows from \cite[Proposition~2.4]{BK} that
each inclusion $\tau_i:S_i \inj S_{i-1}$ is a local complete intersection
(l.c.i.) and $S_i = S_{i-1} \times_{X_{i-1}} X_i$ for $1 \le i \le d-2$,
where we let $S_0 = S_X$.
Since $S_i \inj S_{i-1}$ is an l.c.i. inclusion with $(S_i)_{\rm sing}
= (D_i)_{\rm red} = (S_i \cap D)_{\rm red} = S_i \cap (S_{i-1})_{\rm sing}$, 
it follows that there are compatible 
push-forward maps $\tau^{LW}_{i, *}: \CH^{LW}_0(S_i) \to \CH^{LW}_0(S_{i-1})$
(see \cite[Lemma~1.8]{ESV}) and $\tau_{i, *}: \CH_0(S_i) \to \CH_0(S_{i-1})$
(see \cite[Proposition~3.17]{BK}).
With the above set-up, we have the following.

\begin{prop}\label{prop:PF-alb-sing}
For $1 \le i \le d-2$, there is an isomorphism of algebraic groups 
$\phi_i: J^{d-i}(S_i) \to J^{d-i+1}(S_{i-1})$ and a commutative diagram
\begin{equation}\label{eqn:PF-alb-sing-0} 
\xymatrix@C1pc{
\CH_0(S_i)_{\deg 0} \ar[r]^-{\tau_{i,*}} \ar[d]_{\rho^{\rm semi}_i} &
\CH_0(S_{i-1})_{\deg 0} \ar[d]^{\rho^{\rm semi}_{i-1}} \\
J^{d-i}(S_i) \ar[r]_-{\phi_i} & J^{d-i+1}(S_{i-1}),}
\end{equation}
whose restriction to the $l$-primary torsion subgroups induces isomorphism
of all arrows, for every prime $l \neq p$.
\end{prop}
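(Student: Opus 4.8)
The plan is to compute both semi-abelian Albanese varieties explicitly via the $1$-motive description of \S\ref{sec:SAQ}, to read off $\phi_i$ as a Cartier dual, and to deduce the torsion statement formally. Note first that once $\phi_i$ is shown to be an isomorphism of algebraic groups it is automatically an isomorphism on $l$-primary torsion for every $l$; granting the prime-to-$p$ Roitman isomorphism for the doubles (the prime-to-$p$ assertion, available from \cite{BK}), the vertical maps $\rho^{\rm semi}_i$ and $\rho^{\rm semi}_{i-1}$ are isomorphisms on $\{l\}$-torsion, so the commutativity of \eqref{eqn:PF-alb-sing-0} forces $\tau_{i,*}$ to be an isomorphism on $\{l\}$-torsion as well. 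Thus the real content is the construction of $\phi_i$, the commutativity of the square, and the fact that $\phi_i$ is an isomorphism.

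First I would compute the $1$-motive $[\Lambda(S_i) \to \Pic_W(S_i^N)]$ of the double. Since $S_i = X_i \amalg_{D_i} X_i$, its normalization is $S_i^N = X_i \sqcup X_i$, so $\Pic_W(S_i^N) = \Pic^0(X_i) \times \Pic^0(X_i)$, while the singular locus $(D_i)_{\rm red}$ has integral components $E_{j,i} := E_j \cap H_1 \cap \cdots \cap H_i$ ($1 \le j \le r$) by condition~(4) of \eqref{eqn:Bertini}. Hence $\Lambda_1(S_i) = \Z^{2r}$ on the two copies $E^{\pm}_{j,i}$ of these prime divisors. As both copies of $E_{j,i}$ have the same image in ${\rm Div}(S_i)$, the vanishing of $\pi_*$ cuts out the antidiagonal $\Z^r$, freely generated by the differences $E^+_{j,i} - E^-_{j,i}$, and the vanishing of $\iota_{S_i}$ then identifies $\Lambda(S_i)$ with $\ker\big(\beta_i \colon \Z^r \to \NS(X_i)\big)$, where $\beta_i(e_j) = [E_{j,i}]$; the structure map sends $a \in \Lambda(S_i)$ to $(c_i(a), -c_i(a))$ with $c_i(a) = \big[\sum_j a_j E_{j,i}\big] \in \Pic^0(X_i)$.

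Next I would run the Lefschetz comparison between $S_i$ and $S_{i-1}$. Because $1 \le i \le d-2$, we have $\dim X_{i-1} = d-i+1 \ge 3$, so weak Lefschetz in $l$-adic cohomology ($l \ne p$, valid in characteristic $p$) shows that restriction $\Pic^0(X_{i-1}) \to \Pic^0(X_i)$ is an isomorphism (the $H^1$-statement) and that $\NS(X_{i-1}) \to \NS(X_i)$ is injective on the span of the classes $[E_{j,i-1}]$ (the $H^2$-statement). Under the tautological identification $E_{j,i} = E_{j,i-1} \cap H_i$ the map $\beta_i$ factors as restriction $\circ\, \beta_{i-1}$, so injectivity gives $\ker\beta_i = \ker\beta_{i-1}$, i.e.\ $\Lambda(S_i) = \Lambda(S_{i-1})$; moreover $c_i(a)$ is the restriction of $c_{i-1}(a)$, so the two structure maps match under the Picard isomorphism. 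This yields an isomorphism of $1$-motives $[\Lambda(S_i) \to \Pic^0(X_i)^2] \cong [\Lambda(S_{i-1}) \to \Pic^0(X_{i-1})^2]$, and I would take $\phi_i$ to be its Cartier dual. Equivalently, $\phi_i$ is the map $J^{d-i}(S_i) \to J^{d-i+1}(S_{i-1})$ induced from $\tau_{i,*}$ by the covariant functoriality of the universal regular semi-abelian quotient (dual to the restriction maps on Picard groups); this functoriality is precisely what makes \eqref{eqn:PF-alb-sing-0} commute, and it exhibits $\phi_i$ as the dual of an isomorphism, hence an isomorphism.

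The main obstacle I anticipate is the lattice comparison in characteristic $p$, i.e.\ controlling $\Lambda(S_i)$: one needs the components $E_{j,i}$ to remain integral (guaranteed by the Bertini choices \eqref{eqn:Bertini}) and the N\'eron--Severi restriction to be injective on the span of the $[E_{j,i}]$. The latter rests on weak Lefschetz for $H^2_{\etl}$, which at the bottom of the range $\dim X_{i-1} = 3$ is only an injection rather than an isomorphism; verifying that this injectivity already forces $\ker\beta_i = \ker\beta_{i-1}$, and checking that the $\phi_i$ produced by universality genuinely coincides with the Cartier dual of the $1$-motive comparison (a matching of two a priori different descriptions of the same morphism), are the points that require care. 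Once these are settled, the commutativity and all of the torsion assertions follow formally.
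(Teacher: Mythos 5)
Your proposal is correct and follows the same overall strategy as the paper: describe $J^{d-i}(S_i)$ as the Cartier dual of the $1$-motive $[\Lambda(S_i)\to\Pic_W(S_i^N)]$, match the abelian parts via the Lefschetz theorem for the Albanese of the smooth hypersurface sections, match the lattice parts, dualize to obtain $\phi_i$, get commutativity from universality, and deduce the torsion assertion from the prime-to-$p$ Roitman theorem for the doubles (the paper cites \cite[Theorem~9.8]{BK} and \cite[Theorem~16]{Mallick} for exactly the formal argument you give). The one step you execute differently is the lattice comparison: you compute $\Lambda(S_i)\cong\ker\big(\Z^r\to\NS(X_i)\big)$ explicitly from the antidiagonal structure forced by $\pi_*=0$ and then invoke injectivity of $\NS(X_{i-1})\to\NS(X_i)$ on the relevant span, whereas the paper works with the refined Gysin homomorphism $\psi^{!}$ between the groups of Weil divisors supported on the singular loci of the normalizations and its compatibility with proper push-forward (\cite[Theorem~6.2]{Fulton}) to show directly that $\psi^{!}$ restricts to an isomorphism $\Lambda(S_{i-1})\to\Lambda(S_i)$. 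The two mechanisms are equivalent, and your version has the virtue of making the N\'eron--Severi injectivity explicit. Two points the paper checks that you gloss over: (i) the identification of the restricted class of $E_{j,i-1}$ with $[E_{j,i}]$ requires the intersection multiplicity with $H_i$ to be one, which the paper deduces from separability of the embedding $E_j\inj\P^N_k$; and (ii) the ``covariant functoriality'' you invoke for the commutativity of \eqref{eqn:PF-alb-sing-0} is not formal for universal regular quotients --- the paper establishes it by first proving commutativity at the level of Serre's generalized Albanese $J^d_{\rm Serre}$, which is universal for morphisms from the regular locus, using $(S_i)_{\rm reg}=\tau_i^{-1}((S_{i-1})_{\rm reg})$, and then descending to $J^{d-i}$. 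You correctly flag both the lattice comparison and the matching of the two descriptions of $\phi_i$ as the points needing care, so these are presentational rather than substantive gaps.
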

\begin{proof}
Before we begin the proof, we note that when $S_X \cap H \inj S_X$ is a 
general hypersurface section in a projective embedding of $S_X$, 
then an analogue of the proposition was proven by 
Mallick in \cite[\S~5]{Mallick}. But we can not apply his result directly
because $S_Y \inj S_X$ is not a hypersurface section. Nevertheless,
we shall closely follow Mallick's construction in the proof of the
proposition. Also, we shall prove a stronger assertion than in 
\cite[Theorem~14]{Mallick} because of the special nature of the double. 

We have seen above that the inclusion $S_i \inj S_{i-1}$ is l.c.i.
which preserves the singular loci, and hence there is a
push-forward $\tau_{i, *}: \CH_0(S_i) \to \CH_0(S_{i-1})$.
We now construct the map $\phi_i$. We construct this map for $i=1$
as the method in the general case is completely identical.
With this reduction in mind, we shall let $Y = X_1 = X \cap H_1, \ 
F = D_1 = D \cap H_1$ so that $S_1 = S_Y = S(Y,F)$.

Since $Y \inj X$ is a general hypersurface section with
$S^N_Y = Y \amalg Y$ and $S^N_X = X \amalg X$, one knows that the map 
$\Pic_W(S^N_Y)^{\vee} = \Alb(Y) \times \Alb(Y) \to \Alb(X) \times 
\Alb(X) = \Pic_W(S^N_X)^{\vee}$ is an isomorphism.  
We thus have to construct a pull-back map $\Lambda(S_X) \to \Lambda(S_Y)$
which is an isomorphism.  

We consider the commutative square
\begin{equation}\label{eqn:PF-alb-sing-1} 
\xymatrix@C1pc{
Y \amalg Y \ar[r]^-{\psi} \ar[d]_{\pi_1} & X \amalg X \ar[d]^{\pi} \\
S_Y \ar[r]_{\tau_1} & S_X,}
\end{equation}
where the vertical arrows are the normalization maps
and $\psi$ is the disjoint sum of the inclusion of the hypersurface section 
$Y = X \cap H_1$.
We claim that this square is Cartesian. To see this, note that the vertical
arrows are disjoint sums of two maps. Hence, it is enough to show that 
~\eqref{eqn:PF-alb-sing-1} is Cartesian when we replace $X \amalg X$ by
$X_+$ and $Y \amalg Y$ by $Y_{+}$. In this case, the vertical arrows
are closed immersions and we know directly from the construction
that $S_Y \times_{S_X} X_+ \simeq (Y \times_X S_X) \times_{S_X} X_+ 
\simeq Y_+$.

If we let $F_i := E_i \cap H_1$ for $1 \le j \le r$, it follows from 
the conditions (2) and (3) of ~\eqref{eqn:Bertini} that each $F_i$ is integral
and $F_i \neq F_j$ if $E_i \neq E_j$.
Now, the refined Gysin homomorphism 
$\psi^{!}: \Lambda_{1}(S_X) = \CH_{d-1}(D \amalg D) \to \CH_{d-2}(F \amalg F)
= \Lambda_1(S_Y)$ (in the sense of \cite{Fulton})
takes $E_i$ to $F_i$ for $i = 1, \cdots , r$ in each copy of $D$.
This map is bijective by our choice of the hypersurface sections 
$Y$ and $F_i$'s.

Now, for any $1 \le j \le r$, we have $\psi^{!}([E_j]) =
\alpha_j[F_j]$ in $\Lambda_1(S_Y) =  \CH_{d-2}(F \amalg F)$,
where $\alpha_j$ is the intersection multiplicity of
$E_j \times_{\P^N_k} H_1$ in $D \cdot H_1$.
Since the composite morphism $E_j \inj X \inj \P^N_k$ is a closed immersion
(and hence separable), this intersection multiplicity must be one so that
we have $\alpha_j =1$ for each $j$.
It follows that $\psi^{!}:\Lambda_{1}(S_X) \to \Lambda_1(S_Y)$ is an 
isomorphism of abelian groups.

We next show that $\psi^{!}$ takes $\Lambda(S_X)$ onto $\Lambda(S_Y)$.
This follows because for any $\Delta \in \Lambda(S_X)$, we have
$\pi_{1, *} \circ \psi^{!}(\Delta) = \tau^{!}_1 \circ \pi_*(\Delta)$ by
\cite[Theorem~6.2]{Fulton} and $\pi_*(\Delta) = 0$ in 
$\CH_{d-1}((S_X)_{\rm sing}) = Z_{d-1}((S_X)_{\rm sing})$. 
This means $\psi^{!}(\Delta) \in \Lambda(S_Y)$.
If we let $\Delta = \sum_l \alpha_l[F_l] \in \Lambda(S_Y)$, then
letting $\Delta ' =  \sum_l \alpha_l[E_l] \in \Lambda_1(S_X)$, we get
$\psi^{!}(\Delta ') = \Delta$. We also have
$\tau^{!}_1 \circ \pi_*(\Delta') = \pi_{1, *} \circ \psi^{!}(\Delta ') =
\pi_{1, *}(\Delta) = 0$. Since
$\tau^{!}_1: \CH_{d-1}((S_X)_{\rm sing}) = \CH_{d-1}(D) \to
\CH_{d-2}(F) = \CH_{d-2}((S_Y)_{\rm sing})$ is bijective,
it follows that $\pi_*(\Delta ') = 0$. It follows that
$\psi^{!}:\Lambda(S_X) \to \Lambda(S_Y)$ is surjective and hence an
isomorphism. We have thus constructed an isomorphism
$\phi_1: J^{d-1}(S_Y) \xrightarrow{\simeq} J^{d}(S_X)$. 

To show the commutative diagram~\eqref{eqn:PF-alb-sing-0},  we can
again assume $i = 1$. We can also replace $\CH_0(S_X)$ and $\CH_0(S_Y)$
by $\sZ_0(S_X)$ and $\sZ_0(S_Y)$, respectively. 

We now need to observe that $J^d(S_{X})$ is a quotient of the Cartier dual 
$J^d_{\rm Serre}(S_{X})$ of the 1-motive
$[\Lambda_1(S_X) \to \Pic_W(X \amalg X)]$ and this dual semi-abelian variety
is the universal object in the category of morphisms from 
$(S_X)_{\rm reg} = (X \setminus D) \amalg (X \setminus D)$ to
semi-abelian varieties (see \cite{Serre-1}).
Since $(S_Y)_{\rm reg} = (Y \setminus D) \amalg (Y \setminus D)
= \tau_1^{-1}((S_X)_{\rm reg})$, it follows from this universality
of $J^d_{\rm Serre}(S_X)$ that there is a commutative diagram
as in ~\eqref{eqn:PF-alb-sing-0} if we replace $J^d(S_X)$ and
$J^{d-1}(S_Y)$ by $J^d_{\rm Serre}(S_X)$ and $J^d_{\rm Serre}(S_Y)$,
respectively. The commutative diagram
\[
\xymatrix@C1pc{
J^d_{\rm Serre}(S_Y) \ar[r]^{\phi_{1,*}} \ar@{->>}[d] & J^d_{\rm Serre}(S_X) 
\ar@{->>}[d] \\
J^d(S_Y) \ar[r]_{\phi_{1,*}} & J^d(S_X)}
\]
now finishes the proof of the commutativity of ~\eqref{eqn:PF-alb-sing-0}.
The final assertion about the isomorphism between the $l$-primary torsion 
subgroups follows from \cite[Theorem~9.8]{BK} and \cite[Theorem~16]{Mallick}.
\end{proof}

\subsection{0-cycles and Albanese variety with modulus }
\label{def:DefChowMod1}
Given an integral normal curve ${C}$ over $k$ and an effective divisor
$E \subset {C}$, we say that a rational function $f$ on ${C}$ has modulus
$E$ if $f \in {\rm Ker}(\sO^{\times}_{C, E} \to \sO^{\times}_E)$.
Here, $\sO_{C, E}$ is the semilocal ring of $C$ at the union of
$E$ and the generic point of $C$.
In particular, ${\rm Ker}(\sO^{\times}_{C, E} \to \sO^{\times}_E)$ is just
$k(C)^{\times}$ if $|E| = \emptyset$.
Let $G({C}, E)$ denote the group of such rational functions.

Let $X$ be a smooth quasi-projective scheme of dimension $d \ge 1$ over $k$
and let $D \subset X$ be an effective Cartier divisor.
Let $\sZ_0(X|D)$ be the free abelian group on 
the set of closed points of $X\setminus D$. Let ${C}$ be an integral normal 
curve over $k$ and
let $\varphi_{{C}}\colon{C}\to {X}$ be a finite morphism such that 
$\varphi_{{C}}({C})\not \subset D$.  
The push-forward of cycles along $\varphi_{{C}}$  
gives a well defined group homomorphism
$\tau_{{C}}\colon G({C},\varphi_{{C}}^*(D)) \to Z_0(X|D)$.

\begin{defn}\label{def:DefChowMod-Definition}
We define the Chow group $\CH_0({X}|D)$ of 0-cycles of ${X}$ with 
modulus $D$ to be the cokernel of the homomorphism 
$\divf \colon\bigoplus_{\varphi_{{C}}\colon {C}\to X}G({C},
\varphi_{{C}}^*(D)) \to Z_0(X|D)$,
where the sum is taken over the set of finite morphisms 
$\varphi_{{C}}\colon {C} \to {X}$ from an integral normal curve such that
$\varphi_{{C}}({C}) \not\subset D$.
\end{defn}

It is clear from the moving lemma of Bloch that there is a canonical 
surjection $\CH_0(X|D) \surj \CH_0(X)$.
If $X$ is projective, we let $\CH_0(X|D)_{\rm deg \ 0}$ denote the kernel of the
composite map $\CH_0(X|D) \surj \CH_0(X) \xrightarrow{\deg} H^0(X, \Z)$. 

If $D$ is reduced, it follows using \thmref{cor:LW-lci-join} and
the constructions of \cite[\S~11]{BK} that there exists an Albanese variety 
with modulus $A^d(X|D)$ together with a surjective Abel-Jacobi map 
$\rho_{X|D}: \CH_0(X|D)_{\rm deg \ 0} \to A^d(X|D)$ which is a
universal regular homomorphism in a suitable sense.
One says that $\CH_0(X|D)_{\deg 0}$ is {\sl finite-dimensional}, if 
$\rho_{X|D}$ is an isomorphism.

\section{Torsion in  the Chow group of a singular 
surface}\label{sec:K-2-surf}
Let $X$ be a reduced projective surface over $k$.
When $X$ is smooth, Suslin \cite[p.~19]{Suslin} showed that 
there is a short exact sequence
\begin{equation}\label{eqn:K-2-ex-sm}
0 \to H^1(X, \sK_2) \otimes_{\Z} {\Z}/n \to H^3_{\etl}(X, \mu_n(2)) \to
{_{n}{\CH_0(X)}} \to 0
\end{equation}
for any integer $n \in k^{\times}$. 
When $X$ has only isolated singularities, this exact sequence
was conjectured by Pedrini and Weibel \cite[(0.4)]{PW} and proven by
Barbieri-Viale, Pedrini and Weibel in \cite[Theorem~7.7]{BPW}.
Expanding on their ideas, our goal in this section is to 
prove such an exact sequence for surfaces with
arbitrary singularities. This exact sequence will be used to prove an
analogue of \thmref{thm:Intro-1} for singular surfaces.

\subsection{Gillet's Chern classes}\label{sec:Gillet}
Let $\sK = \Omega BQP$ denote the simplicial presheaf
on $\Sch_k$ such that $\sK(X) = \Omega BQP(X)$, where $BQP(X)$ is the
Quillen's construction of the simplicial set associated to the exact
category $P(X)$ of locally free sheaves on $X$. For any $i \ge 0$, let
$\sK_i$ denote the Zariski sheaf on $\Sch_k$ associated to the presheaf
$X \mapsto K_i(X) = \pi_i(\sK(X))$. 

We fix integers $d, i \ge 0$ and an integer $n \ge 1$ which is prime to 
the exponential characteristic $p$ of the base field $k$.
Let $\mu_n(d)$ denote the sheaf $\mu^{\otimes d}_n$ on ${\Sch_k}/{\etl}$ such that
$\mu_n(X) = {\rm Ker}(\sO^{\times}(X) \xrightarrow{n} \sO^{\times}(X))$.
Let $\sE_{i}$ denote the simplicial sheaf on 
${\Sch_k}/{\zar}$ associated, by the Dold-Kan correspondence, to the good 
truncation $\tau^{\le 0}{\bf R}w_*\mu_n(i)[2i]$ of the chain complex of Zariski 
sheaves ${\bf R}w_*\mu_n(i)[2i]$. In particular, we have $\pi_q \sE_{i}(X) =
H^{2i-q}_{\etl}(X, \mu_n(i))$ for $X \in \Sch_k$ and $0 \le q \le 2i$.

Let $\sL$ denote the homotopy fiber of the map of simplicial presheaves
$\sK \xrightarrow{n} \sK$
so that $\pi_q \sL(X) = K_{q+1}(X, \Z/n)$ for $q \ge -1$.
Let $\sH^i(\mu_n(d))$ denote the Zariski sheaf
${\bf R}^iw_*\mu_n(d)$ on ${\Sch_k}/{\zar}$.
It follows from Gillet's construction of the Chern classes 
(see \cite[\S~5]{Gillet}) that there is a morphism of the simplicial
presheaves on ${\Sch_k}$:
\begin{equation}\label{eqn:Chern-0}
c_i: \sK \to \sE_{i}
\end{equation}
in the homotopy category of simplicial presheaves.
This map is compatible with the local to global spectral
sequences for the associated Zariski sheaves so that 
we get a map of spectral sequences which at the $E_2$ level is
$H^p(X, \sK_q) \to H^p(X, \sH^{2i-q}(\mu_n(i)))$
and converges to the global Chern class on $\sE_{i}$ given by
$c_{i, X}: K_{q-p}(X)\to H^{2i-q+p}_{\etl}(X,\mu_n(i))$.
These Chern classes in fact factor through the maps
$\sK \to \Sigma \sL \to \sE_i$ (see \cite[\S~2]{PW}) so that there are
Chern class maps $c_i: \sL \to \Omega \sE_i$ in the homotopy category of
simplicial presheaves \cite{BG}.

Let $\wt{\sK}$ denote the simplicial presheaf on $\Sch_k$ such that
$\wt{\sK}(X)$ is the universal covering space of $\sK(X)$ and let 
$\wt{\sL}$ denote the homotopy fiber of the map
$\wt{\sK} \xrightarrow{n} \wt{\sK}$. This yields
$\pi_q(\wt{\sK}(X)) = K_q(X)$ for $q \ge 2$ and $\pi_1(\wt{\sK}(X)) = 0$.
Furthermore, we have
\begin{equation}\label{eqn:Chern-1}
\pi_q(\wt{\sL}(X))  = \left\{
\begin{array}{ll}
K_{q+1}(X, \Z/n) & \mbox{if $q \ge 2$} \\
K_2(X)/n & \mbox{if $q =1$} \\ 
0 & \mbox{otherwise.}
\end{array}
\right.
\end{equation}

Let $\wt{\sK}^{(2)}$ denote the second layer of the Postnikov tower 
$(\cdots \to \wt{\sK}^{(n)} \to \wt{\sK}^{(n-1)} \to \cdots \to \wt{\sK}^{(2)} \to
\wt{\sK}^{(1)} \to \wt{\sK}^{(0)}  = \star)$ of $\wt{\sK}$. There are 
compatible family of maps $f_n: \wt{\sK} \to \wt{\sK}^{(n)}$ inducing
$\pi_q \wt{\sK} \xrightarrow{\simeq} \pi_q \wt{\sK}^{(n)}$ for
$q \le n$ and $\pi_q \wt{\sK}^{(n)} = 0$ for $q > n$.
Let $\wt{\sL}^{(2)}$ denote the homotopy fiber of the map
$\wt{\sK}^{(2)} \xrightarrow{n} \wt{\sK}^{(2)}$ so that there is a
commutative diagram of simplicial presheaves

\begin{equation}\label{eqn:Chern-2}
\xymatrix@C1pc{ 
{\wt{\sL}}^{(2)} \ar[r] & {\wt {\sK}}^{(2)} \ar[r]^-{n} & {\wt {\sK}}^{(2)} \\
{\wt{\sL}} \ar[r] \ar[d] \ar[u] & {\wt{\sK}} \ar[r]^-{n} \ar[d] 
\ar[u] & {\wt {\sK}} \ar[u] \ar[d] \\
\sL \ar[r] & {\sK} \ar[r]^-{n} & \sK.}
\end{equation}

\begin{lem}\label{lem:EM-K-2}
Let $\sK^{\bullet}_2$ denote the complex of presheaves $(\sK_2 \xrightarrow{n}
\sK_2)$ on ${\Sch_k}/{\zar}$. Then ${\wt{\sL}}^{(2)}$ is weak equivalent to
the simplicial presheaf obtained by applying the Dold-Kan correspondence
to the chain complex $\sK^{\bullet}_2[2]$.
\end{lem}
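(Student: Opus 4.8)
The plan is to recognize every object in the top row of ~\eqref{eqn:Chern-2} as the Dold--Kan image of a chain complex of abelian presheaves, and to compute the homotopy fibre at the level of chain complexes. Throughout I would work in the category of simplicial presheaves of abelian groups on ${\Sch_k}/{\zar}$, identified via the Dold--Kan correspondence with nonnegatively graded chain complexes of abelian presheaves; I use the standard fact that under this correspondence the homotopy presheaves $\pi_q$ of a simplicial abelian presheaf are the homology presheaves $H_q$ of the associated complex, and that homotopy fibres of maps of simplicial abelian presheaves correspond to shifted mapping cones of the associated chain maps.

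First I would identify $\wt{\sK}^{(2)}$. By the defining property of the Postnikov tower, $\pi_q\wt{\sK}^{(2)} = \pi_q\wt{\sK}$ for $q \le 2$ and $\pi_q\wt{\sK}^{(2)} = 0$ for $q > 2$; since $\pi_0\wt{\sK} = \pi_1\wt{\sK} = 0$ and $\pi_2\wt{\sK} = \sK_2$, the presheaf $\wt{\sK}^{(2)}$ has the single nonvanishing homotopy presheaf $\sK_2$, in degree $2$. Hence it is an Eilenberg--MacLane object $K(\sK_2,2)$, and since any such object with abelian homotopy concentrated in a single degree $\ge 2$ is weakly equivalent to a simplicial abelian presheaf, $\wt{\sK}^{(2)}$ is equivalent to $\Gamma(\sK_2[2])$, the simplicial abelian presheaf associated by Dold--Kan to the complex $\sK_2$ placed in degree $2$.

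Next I would pin down the self-map $n\colon \wt{\sK}^{(2)} \to \wt{\sK}^{(2)}$. Via the Postnikov projection $\wt{\sK} \to \wt{\sK}^{(2)}$ of ~\eqref{eqn:Chern-2}, it is compatible with the multiplication-by-$n$ map on $\wt{\sK}$, which comes from the loop ($H$-space) structure and hence acts as multiplication by $n$ on every homotopy presheaf; in particular it is multiplication by $n$ on $\pi_2 = \sK_2$. The step I expect to carry the real weight is upgrading this statement about $\pi_2$ to a statement about the map in the homotopy category: for an Eilenberg--MacLane object, homotopy classes of self-maps are computed by $[K(\sK_2,2), K(\sK_2,2)] \cong \Hom(\sK_2,\sK_2)$, so a self-map is determined up to homotopy by its effect on $\pi_2$. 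This representability lets me conclude that $n$ is homotopic to $\Gamma$ applied to the chain map ``multiplication by $n$'' on $\sK_2[2]$; some care is needed because this must be argued in the homotopy category of simplicial presheaves rather than for ordinary spaces.

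Finally I would compute the homotopy fibre. Since $\Gamma$ is an equivalence carrying homotopy fibres to shifted mapping cones, $\wt{\sL}^{(2)} = \hofib(n)$ is $\Gamma$ applied to $\cone\big(\sK_2[2] \xrightarrow{n} \sK_2[2]\big)[-1]$, which is the two-term complex $\big(\sK_2 \xrightarrow{n} \sK_2\big)$ placed in the two consecutive degrees $2$ and $1$, i.e. $\sK^{\bullet}_2[2]$. Its homology presheaves are $\ker(n) = {_{n}\sK_2}$ in degree $2$ and $\cok(n) = \sK_2/n$ in degree $1$, which agree with the homotopy presheaves $\pi_2\wt{\sL}^{(2)}$ and $\pi_1\wt{\sL}^{(2)}$ read off from the long exact sequence of the fibration $\wt{\sL}^{(2)} \to \wt{\sK}^{(2)} \xrightarrow{n} \wt{\sK}^{(2)}$. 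This yields the asserted weak equivalence, the only delicate point being the rigidification of $n$ to an honest chain map flagged above.
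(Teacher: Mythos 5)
Your proposal is correct and follows essentially the same route as the paper: both arguments reduce the lemma to identifying $\wt{\sK}^{(2)}$ as an Eilenberg--MacLane object $K(\sK_2,2)$ (the paper does this via the fibre of the Postnikov fibration $\wt{\sK}^{(2)} \to \wt{\sK}^{(1)}$ and May's results, you via the vanishing of all but one homotopy presheaf) and then read off the homotopy fibre of multiplication by $n$ as the Dold--Kan image of the two-term complex. Your explicit treatment of rigidifying the self-map $n$ to the chain-level multiplication by $n$ via $[K(\sK_2,2),K(\sK_2,2)]\cong\Hom(\sK_2,\sK_2)$ is a point the paper leaves implicit, and it is handled correctly.
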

\begin{proof}
Using ~\eqref{eqn:Chern-2}, it suffices to show that
${\wt {\sK}}^{(2)}$ is an Eilenberg-Mac Lane complex of the type
$(\sK_2, 2)$. Let $F^2$ denote the homotopy fiber of the Kan fibration 
${\wt{\sK}}^{(2)} \to {\wt{\sK}}^{(1)}$ in the Postnikov tower. 
We have ${\pi}_{i} {\wt{\sK}}^{(1)} = 0$ for $i \ge 2$ and
${\pi}_{1} {\wt{\sK}}^{(1)} = {\pi}_{1} {\wt{\sK}} = 0$ by 
\cite[Theorem~8.4]{May}.
The long exact homotopy sequence now implies that 
${\pi}_{i} {F^2} \xrightarrow{\simeq}  {\pi}_{i} {\wt{\sK}}^{(2)}$ for all 
$i$ and hence 
${F^2} \to {\wt{\sK}}^{(2)}$ is a weak equivalence by the Whitehead theorem. 
Since $F^2$ is an Eilenberg-Mac Lane complex of type 
$({{\sK}_2}, 2)$ by \cite[Corollary~8.7]{May}, we conclude. 
\end{proof}

\subsection{Cohomology of $\sK_2$ on a surface}\label{sec:Surf*}
Let us now assume that $X$ is a reduced quasi-projective surface over $k$.
Let $\sK_i(\Z/n)$ denote the Zariski sheaf on $\Sch_k$ associated to the
presheaf $X \mapsto K_i(X, \Z/n)$. 
Applying the Brown-Gersten spectral sequence (see \cite[Theorem~3]{BG})
\[
H^p(X, {\pi}_{-q}{\sF}) \Rightarrow H^{p + q}(X, {\sF}),
\]
~\eqref{eqn:Chern-0} and ~\eqref{eqn:Chern-1}, we obtain a commutative
diagram of exact sequences
\begin{equation}\label{eqn:Chern-3}
\xymatrix@C1pc{
H^0(X, {{\sK}_{2}}/{n}) \ar[r]^{d} \ar[d]_{c_2}^{\simeq} & 
H^2(X, {{\sK}_{3}}({\Z}/{n})) 
\ar[r] \ar[d]^{c_2} & H^0(X, {\wt{\sL}}) \ar[r] \ar[d]^{c_2} &
H^1(X, {{\sK}_{2}}/{n}) \ar[r] \ar[d]^{c_2}_{\simeq} &  0 \\
H^0(X, {\sH}^2({\mu}_n(2))) \ar[r]_{d} &
H^2(X, {\sH}^1({\mu}_{n}(2))) \ar[r] & H^3_{\etl} (X, {\mu}_{n}(2)) 
\ar[r] & H^1(X, {\sH}^2({\mu}_{n}(2))) \ar[r] & 0,}
\end{equation}
where the vertical arrows are induced by the Chern class map $c_{2,X}$.
The bottom row is exact because ${\sH}^3({\mu}_{n}(2)) = 0$ by
\cite[Theorem~VI.7.2]{Milne}.
The map $H^0(X, {\wt{\sL}}) \xrightarrow{c_2}  H^3_{\etl} (X, {\mu}_{n}(2))$
is the one induced by the maps
$H^0(X, {\wt{\sL}}) \xrightarrow{c_2} H^0(X, \Omega \sE_2) \simeq
H^{-1}(X, \sE_2) = \pi_1\sE_2(X) = H^{3}_{\etl}(X, \mu_n(2))$. 
The leftmost and the rightmost vertical arrows are isomorphisms by
Hoobler's theorem \cite[Theorem~3]{Hoobler}.

\enlargethispage{25pt}

\begin{lem}\label{lem:K-2-Surface}
There is a functorial map $H^0(X, {\wt{\sL}}^{(2)}) 
\to \H^2(X, \sK^{\bullet}_2)$ and a commutative
diagram with exact rows
\begin{equation}\label{eqn:K-2-Surface-0}
\xymatrix@C1pc{
H^0(X, \pi_1 \wt{\sL}^{(2)}) \ar[r]^{d} \ar[d] &
H^2(X, \pi_2 \wt{\sL}^{(2)}) \ar[r] \ar[d] &
H^0(X, {\wt{\sL}}^{(2)}) \ar[r] \ar[d] & 
H^1(X, \pi_1 \wt{\sL}^{(2)}) \ar[r] \ar[d] & 0 \\
H^0(X, \sK_2/n) \ar[r]_{d} & H^2(X, {_n\sK_2}) \ar[r] &
\H^2(X, \sK^{\bullet}_2) \ar[r] & H^1(X, \sK_2/n) \ar[r] & 0}
\end{equation}
such that  all vertical arrows are isomorphisms.
\end{lem}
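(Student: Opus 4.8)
The plan is to realize both rows of~\eqref{eqn:K-2-Surface-0} as the four-term exact sequences coming from two spectral sequences --- the Brown-Gersten descent spectral sequence for the simplicial presheaf $\wt{\sL}^{(2)}$ for the top row, and the hypercohomology spectral sequence of the two-term complex $\sK^{\bullet}_2$ for the bottom row --- and then to use the weak equivalence of \lemref{lem:EM-K-2} to match the two spectral sequences term by term.

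First I would compute the homotopy sheaves of $\wt{\sL}^{(2)}$. By the proof of \lemref{lem:EM-K-2}, $\wt{\sK}^{(2)}$ is an Eilenberg-Mac Lane complex of type $(\sK_2, 2)$, so $\pi_2\wt{\sK}^{(2)} = \sK_2$ and $\pi_q\wt{\sK}^{(2)} = 0$ for $q \neq 2$. Feeding this into the long exact homotopy sequence of the fibration $\wt{\sL}^{(2)} \to \wt{\sK}^{(2)} \xrightarrow{n} \wt{\sK}^{(2)}$ yields $\pi_2\wt{\sL}^{(2)} = {_n\sK_2}$, $\pi_1\wt{\sL}^{(2)} = \sK_2/n$, and $\pi_q\wt{\sL}^{(2)} = 0$ otherwise. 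These two identifications will be the outer vertical arrows of~\eqref{eqn:K-2-Surface-0}, and they are isomorphisms by construction.

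Next I would write down the descent spectral sequence $H^p(X, \pi_{-q}\wt{\sL}^{(2)}) \Rightarrow H^{p+q}(X, \wt{\sL}^{(2)})$. Since $X$ is a surface, $H^p(X,-)$ vanishes for $p > 2$, and since only $\pi_1$ and $\pi_2$ survive, the only contributions to the abutment in total degree $0$ are $H^1(X, \pi_1\wt{\sL}^{(2)})$ and $H^2(X, \pi_2\wt{\sL}^{(2)})$, linked by the single possibly non-zero differential $d_2 \colon H^0(X, \pi_1\wt{\sL}^{(2)}) \to H^2(X, \pi_2\wt{\sL}^{(2)})$; this collapses to the exact top row. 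The identical bookkeeping for the hypercohomology spectral sequence $H^p(X, \sH^q(\sK^{\bullet}_2)) \Rightarrow \H^{p+q}(X, \sK^{\bullet}_2)$, using the cohomology sheaves $\sH^0(\sK^{\bullet}_2) = {_n\sK_2}$ and $\sH^1(\sK^{\bullet}_2) = \sK_2/n$ together with the same surface-dimension vanishing, produces the exact bottom row with $\H^2(X, \sK^{\bullet}_2)$ in the middle.

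Finally, \lemref{lem:EM-K-2} provides a weak equivalence between $\wt{\sL}^{(2)}$ and the simplicial presheaf obtained from $\sK^{\bullet}_2[2]$ by the Dold-Kan correspondence (the shift by $2$ matching the total-degree-$0$ abutment of the former with $\H^2$ of the latter). Under Dold-Kan the homotopy sheaves of the simplicial presheaf are the cohomology sheaves of the complex, and the descent spectral sequence of the former is the hypercohomology spectral sequence of the latter; hence the weak equivalence induces a morphism of spectral sequences whose $E_2$-terms are precisely the identifications $\pi_1\wt{\sL}^{(2)} \cong \sK_2/n$ and $\pi_2\wt{\sL}^{(2)} \cong {_n\sK_2}$ found above. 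In particular it is an isomorphism on $E_2$, hence on the abutments, giving the asserted functorial map $H^0(X, \wt{\sL}^{(2)}) \to \H^2(X, \sK^{\bullet}_2)$ and forcing every column of~\eqref{eqn:K-2-Surface-0} to be an isomorphism. The step demanding the most care is this last one: one must check that the weak equivalence of \lemref{lem:EM-K-2} is genuinely compatible with the two spectral sequences --- that it induces a map of spectral sequences, not merely an isomorphism on homotopy sheaves and on abutments separately --- so that an isomorphism at $E_2$ propagates to the middle vertical arrow. The functoriality in $X$ of both the Postnikov/Dold-Kan construction and the descent spectral sequence is what secures this compatibility.
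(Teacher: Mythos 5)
Your proposal is correct and follows essentially the same route as the paper: identify the homotopy sheaves of $\wt{\sL}^{(2)}$ via Lemma~\ref{lem:EM-K-2}, obtain the top row from the Brown--Gersten descent spectral sequence and the bottom row from the (hyper)cohomology of the two-term complex $\sK^{\bullet}_2$ using the surface-dimension vanishing, and deduce the commutativity and the isomorphisms from the weak equivalence $\wt{\sL}^{(2)} \to \sK^{\bullet}_2[2]$ inducing a map of spectral sequences. The only cosmetic difference is that the paper phrases the bottom row via the exact triangle ${_{n}\sK_2} \to \sK^{\bullet}_2 \to {\sK_2}/{n}[-1]$ rather than the spectral sequence, which amounts to the same four-term sequence.
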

\begin{proof}
The bottom exact sequence follows from the exact triangle
\begin{equation}\label{eqn:K-2-Surface-1}
{_{n} {\sK}_2} \to {\sK}^{\bullet}_2 \to {{\sK}_2}/{n}[-1] \to 
{_{n} {\sK}_2}[1]
\end{equation}
in the derived category of Zariski sheaves of abelian groups on 
${\Sch_k}/{\zar}$.
On the other hand, Lemma~\ref{lem:EM-K-2} says that 
${\wt {\sK}}^{(2)}$ is an Eilenberg-Mac Lane complex of the type
$(\sK_2, 2)$ and there is a homotopy equivalence
${\wt{\sL}}^{(2)} \to \sK^{\bullet}_2[2]$. This in particular implies
that $\pi_1 \wt{\sL}^{(2)} \simeq \sK_2/n$, $\pi_2 \wt{\sL}^{(2)} \simeq
{_n\sK_2}$ and $\pi_i \wt{\sL}^{(2)} = 0$ for $i \neq 1,2$.
Applying the Brown-Gersten spectral sequence to $H^0(X, {\wt{\sL}}^{(2)})$
and using  Lemma~\ref{lem:EM-K-2}, we conclude the proof.
The commutativity follows because both rows are obtained by applying the
hypercohomology spectral sequence to the map 
${\wt{\sL}}^{(2)} \to \sK^{\bullet}_2[2]$.
\end{proof}

The key step in extending ~\eqref{eqn:K-2-ex-sm} to arbitrary surfaces is the
following result.

\begin{lem}\label{lem:Surface-0*1}
The map $H^2(X, {\sK}_3 ({\Z}/{n})) \to H^2(X, {_{n}{\sK}_2})$, induced by 
the universal coefficient theorem, has 
a natural factorization
\[
H^2(X, {\sK}_3 ({\Z}/{n})) \xrightarrow{c_2} H^2(X, {\sH}^1({\mu}_{n}(2)))
\xrightarrow{\nu}  H^2(X, {_{n}{\sK}_2})
\]
such that the map $\nu$ is an isomorphism.
\end{lem}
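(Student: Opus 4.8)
The plan is to realize the asserted factorization already at the level of Zariski sheaves and only then pass to $H^2(X,-)$. Recall from ~\eqref{eqn:Chern-2} that the natural map $\wt{\sL}\to\wt{\sL}^{(2)}$ induces on $\pi_2$ exactly the universal coefficient morphism $u\colon\sK_3(\Z/n)\to{_n\sK_2}$, where I use Lemma~\ref{lem:EM-K-2} to identify $\pi_2\wt{\sL}^{(2)}$ with ${_n\sK_2}$, while Gillet's Chern class $\wt{\sL}\xrightarrow{c_2}\Omega\sE_2$ induces on $\pi_2$ the map $c_2\colon\sK_3(\Z/n)\to\pi_2\Omega\sE_2=\sH^1(\mu_n(2))$ of the statement. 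Since $\sH^3(\mu_n(2))=0$, the two-stage truncation $\tau_{\le 2}\Omega\sE_2$ has homotopy sheaves only $\pi_1=\sH^2(\mu_n(2))$ and $\pi_2=\sH^1(\mu_n(2))$, and on $\pi_1$ the Chern class is precisely Hoobler's isomorphism $\sK_2/n\xrightarrow{\simeq}\sH^2(\mu_n(2))$. First I would use the compatibility of $c_2$ with the Postnikov and Bockstein structure to produce a morphism of sheaves $\nu'\colon\sH^1(\mu_n(2))\to{_n\sK_2}$ with $\nu'\circ c_2=u$; applying $H^2(X,-)$ then yields the factorization with $\nu=H^2(X,\nu')$.

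To see that $\nu'$ exists and to identify its kernel, I would compute $c_2$ and $u$ on the stalks, i.e. on the local rings $\sO_{X,x}$. On the residue fields the class $c_2$ is the projection of $K_3(-,\Z/n)$ onto its weight-two graded piece for the $\gamma$-filtration, which by Merkurjev--Suslin (the sheaf form being Hoobler's theorem) is $\sH^1(\mu_n(2))$; its kernel is the weight-three (Milnor) summand, which is contained in the kernel $\sK_3/n$ of $u$. Hence $u$ factors through $c_2$, the map $\nu'$ is the resulting surjection of sheaves, and one obtains a short exact sequence $0\to\sQ\to\sH^1(\mu_n(2))\xrightarrow{\nu'}{_n\sK_2}\to 0$, in which $\sQ$ is the sheaf whose stalks are the indecomposable parts $K_3^{\mathrm{ind}}(\sO_{X,x})/n$. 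The naturality of the factorization $u=\nu'\circ c_2$ is built in, since Gillet's classes are defined on the mod-$n$ $K$-theory presheaf itself and are compatible with the maps of ~\eqref{eqn:Chern-2}.

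It then remains to prove that $\nu=H^2(X,\nu')$ is an isomorphism. From the long exact cohomology sequence of the above short exact sequence and the vanishing of $H^p(X,-)$ for $p>2=\dim X$, the map $\nu$ is automatically surjective, and it is injective as soon as $H^2(X,\sQ)=0$; this vanishing is the main obstacle. I would establish it by the coniveau method: on a surface the top cohomology $H^2(X,\sQ)$ receives contributions only from the codimension-two (closed) points, where the residue field is the algebraically closed field $k$, and there $K_3(k)$ is divisible because $n$ is prime to $p=\Char(k)$ (Suslin), so $K_3^{\mathrm{ind}}(k)/n=0$ and the closed-point terms vanish. The genuinely delicate point, where I expect to spend the most effort, is that the Gersten resolution is not available at the singular points of $X$: I would either restrict to $X_{\sing}$, noting that the discrepancy is supported on a scheme of dimension $\le 1$ and so cannot contribute to the top cohomology of these sheaves, or compare directly the $\sE_2$- and $\sK^{\bullet}_2$-hypercohomology spectral sequences of ~\eqref{eqn:Chern-3} and ~\eqref{eqn:K-2-Surface-0}, deducing the vanishing from Hoobler's isomorphisms on the two outer columns together with the five lemma.
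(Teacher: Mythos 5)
Your overall strategy---factor the universal-coefficient map through $c_2$, identify the kernel, and kill its top cohomology---matches the paper's, and you correctly isolate the essential arithmetic input (Merkurjev--Suslin/Levine: over a field, $\ker(c_2)$ is the image of Milnor $K_3$, hence lands in $\ker(u)$). But the step where you claim $H^2(X,\sQ)=0$ contains a genuine gap. You argue by ``coniveau'' that the top cohomology only sees the closed points, where $K_3^{\mathrm{ind}}(k)/n=0$. This confuses the stalk of a sheaf at a closed point with the codimension-two term of a Gersten-type complex, which is a \emph{doubly delooped} functor of the residue field, not the original functor. The same reasoning applied to $\sK_2/n$ on a smooth projective surface would give $H^2(X,\sK_2/n)=0$ because $K_2(k)/n=0$; in fact $H^2(X,\sK_2/n)=\CH_0(X)\otimes\Z/n\neq 0$, the correct codimension-two Gersten term being $K_0(k(y))/n=\Z/n$. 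So the closed-point computation proves nothing, and moreover you have no Gersten resolution for $\sQ$ to run a coniveau argument with in the first place.

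What is actually needed is that $\sQ$ vanishes at \emph{every} point of the regular locus, including the generic point and the codimension-one points---equivalently, that $a\mapsto\{a,\zeta\}$ defines an isomorphism of sheaves $\sH^1(\mu_n(2))\to{_{n}\sK_2}$ on $X_{\rm reg}$. This is precisely \cite[Lemma~6.2]{BPW}, which you never invoke; once you have it, $\sQ$ is supported on $X_{\rm sing}$, which has dimension $\le 1$, and $H^2(X,\sQ)=0$ by Grothendieck vanishing, with no coniveau argument at all. This is essentially how the paper proceeds, except that it never constructs $\nu$ as $H^2$ of a sheaf map: it takes from \cite[Lemma~6.2]{BPW} a splitting $\phi:\sH^1(\mu_n(2))\to\sK_3(\Z/n)$ with $c_2\circ\phi=-1$ and $\psi\circ\phi=\{-,\zeta\}$, which yields at once the surjectivity of $c_2$ and the candidate isomorphism $-\psi\circ\phi$ for $\nu$, and then only has to check $\ker(c_2)\subseteq\ker(\psi)$ \emph{after applying} $H^2(X,-)$; there \cite[Lemma~1.3]{PW} reduces everything to the smooth locus and disposes of the singular stalks, which is exactly the other difficulty you acknowledge but leave unresolved in your sheaf-level construction of $\nu'$. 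I would restructure along those lines rather than trying to make the stalkwise factorization and the coniveau computation precise.
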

\begin{proof}
By \cite[Lemma~6.2, Variant~6.3]{BPW}, there is a commutative diagram
\begin{equation}\label{eqn:K-2-Surface-2}
\xymatrix@C1pc{
{\sH}^1({\mu}_{n}(2)) \ar[r]^-{\phi} \ar[dr] & {\sK}_3 ({\Z}/{n})
\ar[d]^{\psi} \ar[r]^-{c_2} & {\sH}^1({\mu}_{n}(2)) \ar@{-->}[dl] \\
& {_{n}{\sK}_2}}
\end{equation}
such that $c_2 \circ \phi = -1$ and the composite $\psi \circ \phi$ is given by 
$a \mapsto \{a, \zeta\} \in {_{n}{\sK}_2}$ 
(where $\zeta \in k^{\times}$ is a primitive $n^{\rm th}$ root of unity).
This composite map is surjective and
an isomorphism on the regular locus of $X$ by \cite[Lemma~6.2]{BPW}.
It follows that the induced map 
$\psi \circ \phi: H^2(X, {\sH}^1({\mu}_{n}(2))) \to H^2(X, {_{n}{\sK}_2})$ is 
an isomorphism (see \cite[Lemma~1.3]{PW}). 
We thus have a diagram
\begin{equation}\label{eqn:Surface-0*1-0}
\xymatrix@C1pc{
H^2(X, {\sH}^1({\mu}_{n}(2))) \ar[r]^{\phi} \ar[dr]_{\simeq} &
H^2(X, {\sK}_3 ({\Z}/{n})) \ar@{->>}[r]^-{c_2} \ar[d]^{\psi} &
H^2(X, {\sH}^1({\mu}_{n}(2))) \ar@{-->}[dl]^{\nu} \\
& H^2(X, {_{n}{\sK}_2})}
\end{equation}
in which the triangle on the left is commutative. To prove the lemma,
it is therefore sufficient to show that ${\rm Ker}(c_2) =
{\rm Ker}(\psi)$. Equivalently, ${\rm Ker}(c_2) \subseteq {\rm Ker}(\psi)$. 

We set $\sF = {\rm Ker}(c_2)$ so that we have a split exact
sequence 
\begin{equation}\label{eqn:Surface-0*1-1}
0 \to \sF \to {\sK}_3 ({\Z}/{n}) \xrightarrow{c_2} {\sH}^1({\mu}_{n}(2)) \to 0.
\end{equation}
By \cite[Lemma~1.3]{PW}, it suffices to show that the composite map
$\sF \to  {\sK}_3 ({\Z}/{n}) \xrightarrow{\psi} {_{n}{\sK}_2}$
of Zariski sheaves is zero on the smooth locus of $X$. 
Since this is a local question, it suffices to show that for 
a regular local ring $A$ which is essentially of finite type over $k$,
the map ${\rm Ker}(c_2) \to {_{n}{K}_2(A)}$ is zero.
By the Gersten resolution of $K_2(A)$, Bloch-Ogus resolution for
$H^1_{\etl}(A, \mu_n(2))$ and Gillet's resolution for $K_3(A, {\Z}/n)$
(see \cite{Quillen}, \cite{Bloch-O} and \cite{Gillet-1}), 
we can replace $A$ by its fraction field $F$.

We now have a commutative diagram 
\begin{equation}\label{eqn:Surface-0*1-2}
\xymatrix@C1pc{
K^M_3(F) \ar[d] \ar[r] & K_3(F, {\Z}/n) \ar[r]^-{c_2} \ar@{=}[d] &
H^1_{\etl}(F, \mu_n(2)) \ar@{-->}[d] \ar[r] & 0 \\
K_3(F) \ar[r] &  K_3(F, {\Z}/n) \ar[r]_-{\psi} & {_{n}{K}_2(F)} \ar[r] & 0.}
\end{equation}

One of the main results of \cite{Levine-Ind} (and also \cite{MS}) shows 
that the top row in ~\eqref{eqn:Surface-0*1-2} is exact. Since the bottom row is
clearly exact, we get the desired conclusion.
\end{proof}

Our first main result of this section is the following extension of 
~\eqref{eqn:K-2-ex-sm} to surfaces with arbitrary singularities.

\begin{thm}\label{thm:K-2-coh-0}
Let $X$ be a reduced quasi-projective surface over $k$ and let $n \ge 1$ be an 
integer prime to $p$. Then, there is a short exact sequence
\[
0 \to H^1(X, \sK_2) \otimes_{\Z} {\Z}/n \to H^3_{\etl}(X, \mu_n(2)) \to
{_{n}{\CH_0(X)}} \to 0.
\]
\end{thm}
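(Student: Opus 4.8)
The plan is to assemble the short exact sequence from the pieces already developed in this section, with the main work being the identification of the relevant cohomology groups and the verification that a suitable Chern-class comparison is an isomorphism in the right degrees. I would proceed as follows.

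First I would write down the hypercohomology sequence of the complex $\sK^{\bullet}_2 = (\sK_2 \xrightarrow{n} \sK_2)$ coming from the bottom row of the diagram in \lemref{lem:K-2-Surface}, together with the analogous Chern-class diagram ~\eqref{eqn:Chern-3} for the {\'e}tale side. Combining \lemref{lem:K-2-Surface} (which identifies $H^0(X,\wt{\sL}^{(2)})$ with $\H^2(X,\sK^{\bullet}_2)$ and computes the flanking terms as $H^0(X,\sK_2/n)$, $H^2(X,{_n\sK_2})$, $H^1(X,\sK_2/n)$) with the Chern-class factorization through $\wt{\sL}^{(2)}$ in ~\eqref{eqn:Chern-2}, I would build a map from the $\sK_2$-hypercohomology sequence to the {\'e}tale sequence whose target term is $H^3_{\etl}(X,\mu_n(2))$. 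The key input is \lemref{lem:Surface-0*1}: it provides the isomorphism $\nu: H^2(X,\sH^1(\mu_n(2))) \xrightarrow{\simeq} H^2(X,{_n\sK_2})$ factoring the universal-coefficient map, and this is precisely what upgrades the middle vertical arrow in ~\eqref{eqn:Chern-3} to an isomorphism on the relevant subquotient.

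Next I would identify the outer terms explicitly. Since $X$ is a surface, $H^i(X,\sK_2/n) = 0$ for $i \ge 2$ (cohomological dimension for the Zariski topology), so the term $H^0(X,\sK_2/n)$ feeds into $H^2(X,{_n\sK_2})$ and the cokernel on the right is controlled by $H^1(X,\sK_2/n)$. Using the universal coefficient sequence $0 \to H^1(X,\sK_2)\otimes_{\Z}\Z/n \to H^1(X,\sK_2/n) \to {_n}H^2(X,\sK_2) \to 0$ together with the fact that $H^2(X,\sK_2) \simeq \CH_0(X)$ (Bloch's formula; here it is the Levine--Weibel/modified version, which is where the surface hypothesis and the identification of $\CH_0$ enter), the torsion subgroup ${_n}H^2(X,\sK_2)$ becomes ${_n}\CH_0(X)$. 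Tracing through the maps, the composite sequence then reads $0 \to H^1(X,\sK_2)\otimes_{\Z}\Z/n \to H^3_{\etl}(X,\mu_n(2)) \to {_n}\CH_0(X) \to 0$, which is the asserted sequence.

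The hard part will be the exactness of the middle, i.e.\ showing that the Chern-class map $\H^2(X,\sK^{\bullet}_2) \to H^3_{\etl}(X,\mu_n(2))$ is an isomorphism so that the two four-term sequences can be spliced. This rests on the leftmost and rightmost vertical arrows of ~\eqref{eqn:Chern-3} being isomorphisms (Hoobler's theorem, giving $H^i(X,\sK_2/n) \simeq H^i(X,\sH^2(\mu_n(2)))$), the vanishing $\sH^3(\mu_n(2)) = 0$ used to truncate the {\'e}tale spectral sequence, and $\nu$ being an isomorphism on $H^2$. I would verify by a diagram chase on the five-term comparison that the induced map on the glued sequence is an isomorphism in the middle, using the five lemma on the outer isomorphisms; the genuinely delicate point, already isolated and resolved in \lemref{lem:Surface-0*1}, is that $\mathrm{Ker}(c_2) \subseteq \mathrm{Ker}(\psi)$, which for arbitrary (not merely isolated) singularities is what allows the argument of \cite{BPW} to go through on all of $X$ rather than just away from the singular locus.
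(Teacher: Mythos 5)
Your proposal follows the paper's proof essentially step for step: the comparison diagram assembled from \lemref{lem:K-2-Surface}, ~\eqref{eqn:Chern-2} and ~\eqref{eqn:Chern-3}, Hoobler's theorem for the outer vertical isomorphisms, \lemref{lem:Surface-0*1} to see that the two middle arrows out of $H^2(X, {{\sK}_{3}}({\Z}/{n}))$ are surjective with equal kernels, the resulting isomorphism $\H^2(X, \sK^{\bullet}_2) \simeq H^3_{\etl}(X, \mu_n(2))$, and finally universal coefficients together with Levine's formula $H^2(X,\sK_2) \simeq \CH^{LW}_0(X) \simeq \CH_0(X)$. Two small corrections: the universal coefficient sequence you invoke must have the hypercohomology group $\H^2(X, \sK^{\bullet}_2)$ as its middle term, not $H^1(X, \sK_2/n)$ --- on a singular surface these differ by the image of $H^2(X, {_n\sK_2})$, and it is $\H^2(X, \sK^{\bullet}_2)$ that the Chern class identifies with $H^3_{\etl}(X, \mu_n(2))$ (the two coincide only when that image vanishes, e.g.\ for smooth $X$ as in \lemref{lem:Bloch-surface}); likewise the four-term sequence terminates because $H^3(X,-)=0$ for Zariski sheaves on a surface, not because $H^2(X,\sK_2/n)$ vanishes. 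Both repairs are already supplied by \lemref{lem:K-2-Surface}, so the architecture and conclusion are unaffected; also note that the "five lemma" step is really the identical-kernels diagram chase, since the middle vertical arrows are only surjections, a point you correctly isolate.
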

\begin{proof}
It follows from \lemref{lem:K-2-Surface} and the map between the
Brown-Gersten spectral sequences associated to the morphism of
simplicial presheaves $\wt{\sL} \to \wt{\sL}^{(2)}$
that there is a commutative diagram
\begin{equation}\label{eqn:K-2-Surface-3}
\xymatrix@C1pc{
H^0(X, {{\sK}_{2}}/{n}) \ar[r]^<<<{d} \ar[d]_{\simeq} & 
H^2(X, {{\sK}_{3}}({\Z}/{n})) 
\ar[r] \ar[d]^{\psi} & H^0(X, {\wt{\sL}}) \ar[r] \ar[d] &
H^1(X, {{\sK}_{2}}/{n}) \ar[r] \ar[d]^{\simeq} &  0 \\
H^0(X, \sK_2/n) \ar[r]_{d} & H^2(X, {_n\sK_2}) \ar[r] &
H^0(X, \wt{\sL}^{(2)}) \ar[r] & H^1(X, \sK_2/n) \ar[r] & 0.}
\end{equation}

Combining this with ~\eqref{eqn:Chern-3} and \lemref{lem:K-2-Surface}, 
we obtain a commutative diagram
\begin{equation}\label{eqn:K-2-Surface-4}
\xymatrix@C.8pc{
H^0(X, \sK_2/n) \ar[r]^{d} & H^2(X, {_n\sK_2}) \ar[r] &
\H^2(X, \sK^{\bullet}_2) \ar[r] & H^1(X, \sK_2/n) \ar[r] & 0 \\
H^0(X, {{\sK}_{2}}/{n}) \ar[r]^<<<{d} \ar[d]^{c_2}_{\simeq} \ar[u]^{\simeq} & 
H^2(X, {{\sK}_{3}}({\Z}/{n})) \ar[r] \ar[d]^{c_2} \ar[u]_{\psi} & 
H^0(X, {\wt{\sL}}) \ar[r] \ar[d]^{c_2} \ar[u] &
H^1(X, {{\sK}_{2}}/{n}) \ar[r] \ar[d]^{\simeq}_{c_2} \ar[u]_{\simeq} &  0 \\
H^0(X, {\sH}^2({\mu}_n(2))) \ar[r]_{d} &
H^2(X, {\sH}^1({\mu}_{n}(2))) \ar[r] & H^3_{\etl} (X, {\mu}_{n}(2)) 
\ar[r] & H^1(X, {\sH}^2({\mu}_{n}(2))) \ar[r] & 0.}
\end{equation}

It follows from \lemref{lem:Surface-0*1} that $\psi$ and the corresponding
vertical arrow downward are surjective with identical kernels. A simple
diagram chase shows that the maps $H^0(X, {\wt{\sL}}) \to 
\H^2(X, \sK^{\bullet}_2)$ and $H^0(X, {\wt{\sL}}) \to H^3_{\etl} (X, {\mu}_{n}(2))$
are surjective with identical kernels. In particular, there is a
natural isomorphism $c_{2, X}: \H^2(X, \sK^{\bullet}_2) \xrightarrow{\simeq}
H^3_{\etl} (X, {\mu}_{n}(2))$. 
Using the exact sequence
\[
0 \to H^1(X, \sK_2) \otimes_{\Z} {\Z}/n \to \H^2(X, \sK^{\bullet}_2) \to
{_{n}{H^2(X, \sK_2)}} \to 0
\]
and the isomorphisms $H^2(X, \sK_2) \xrightarrow{\simeq} \CH^{LW}_0(X)
\xrightarrow{\simeq} \CH_0(X)$ (see \cite[Theorem~7]{Levine-1} and  
\cite[Theorem~3.17]{BK}), we now conclude the proof.
\end{proof}

\subsection{Theorem~\ref{thm:Intro-1} for singular surfaces}
\label{sec:Div}
As an application of \thmref{thm:K-2-coh-0}, we now prove a version of 
\thmref{thm:Intro-1} for singular surfaces. This result was
proven for normal projective surfaces in \cite[Theorem~7.9]{BPW}.
Let $l\neq p$ be a prime number. Let $X$ be a reduced projective surface over
$k$. We shall make no distinction between $\CH^{LW}_0(X)$ and $\CH_0(X)$
in view of \cite[Theorem~3.17]{BK}.

\begin{lem}\label{lem:Div-0}
$H^3_{\etl}(X, {\Q_l}/{\Z_l}(2))$ is divisible by $l^n$ for every $n \ge 1$.
\end{lem}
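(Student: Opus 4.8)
The plan is to deduce the stated divisibility from the torsion-freeness of the top $l$-adic cohomology group $H^4_{\etl}(X,\Z_l(2)) := \varprojlim_n H^4_{\etl}(X,\mu_{l^n}(2))$. Consider the exact sequence of (pro-){\'e}tale sheaves $0 \to \Z_l(2) \to \Q_l(2) \to \Q_l/\Z_l(2) \to 0$ and its long exact cohomology sequence, in which the image of the connecting map $H^3_{\etl}(X,\Q_l/\Z_l(2)) \to H^4_{\etl}(X,\Z_l(2))$ equals the kernel of $H^4_{\etl}(X,\Z_l(2)) \to H^4_{\etl}(X,\Q_l(2)) = H^4_{\etl}(X,\Z_l(2))\otimes_{\Z_l}\Q_l$, i.e. the torsion subgroup of $H^4_{\etl}(X,\Z_l(2))$. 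Hence, once I know that $H^4_{\etl}(X,\Z_l(2))$ is torsion-free, this connecting map vanishes and the natural map $H^3_{\etl}(X,\Q_l(2)) \to H^3_{\etl}(X,\Q_l/\Z_l(2))$ is surjective. Since its source is a $\Q_l$-vector space, hence uniquely divisible, its quotient $H^3_{\etl}(X,\Q_l/\Z_l(2))$ is divisible by $l^n$ for every $n \ge 1$, which is exactly the assertion.

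The heart of the matter is therefore the top cohomology $H^4_{\etl}(X,\mu_{l^n}(2))$ of the singular surface $X$, and showing that these groups assemble into a torsion-free $\Z_l$-module. First I would reduce to the regular locus: writing $U = X_{\rm reg}$ and $Z = X_{\rm sing}$, one has $\dim Z \le 1$, so $H^i_{\etl}(Z,-) = 0$ for $i \ge 3$ by {\'e}tale cohomological dimension, and the long exact sequence attached to $0 \to j_!(\mu_{l^n}(2)|_U) \to \mu_{l^n}(2) \to i_*(\mu_{l^n}(2)|_Z) \to 0$ yields an isomorphism $H^4_c(U,\mu_{l^n}(2)) \xrightarrow{\simeq} H^4_{\etl}(X,\mu_{l^n}(2))$, using properness of $X$ to identify $H^*_{\etl}(X,j_!(-))$ with the compactly supported cohomology $H^*_c(U,-)$. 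Poincar{\'e} duality on the smooth surface $U$ then identifies $H^4_c(U,\mu_{l^n}(2))$ with the dual of $H^0_{\etl}(U,\Z/n) \cong (\Z/n)^{s}$, where $s$ is the number of two-dimensional irreducible components of $X$. Passing to the inverse limit over $n$ gives $H^4_{\etl}(X,\Z_l(2)) \cong \Z_l^{s}$, which is torsion-free, completing the argument.

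The main obstacle is precisely this top-cohomology computation in the singular, possibly non-equidimensional and non-normal, setting: I must ensure that neither the singularities nor the lower-dimensional components of $X$ contribute in degree $4$, and that the trace/duality identification on $U$ is compatible with the transition maps $\mu_{l^{n+1}} \to \mu_{l^n}$, so that the limit is genuinely the free module $\Z_l^{s}$ rather than merely torsion-free at each finite level. All of these reduce to standard facts—vanishing $H^i_{\etl}(W,-) = 0$ for $i > 2\dim W$ with torsion coefficients prime to $p$, and Poincar{\'e} duality for smooth varieties—but assembling them carefully for an arbitrary reduced projective surface is where the care is needed.
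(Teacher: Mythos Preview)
Your argument is correct, and it takes a genuinely different route from the paper's. The paper deduces the divisibility directly from the exact sequence already established in \thmref{thm:K-2-coh-0}: passing to the limit gives
\[
0 \to H^1(X,\sK_2)\otimes_{\Z}{\Q_l}/{\Z_l} \to H^3_{\etl}(X,{\Q_l}/{\Z_l}(2)) \to \CH_0(X)\{l\} \to 0,
\]
and the paper observes that both outer terms are $l^n$-divisible---the left trivially, the right because $\CH_0(X)_{\deg 0}$ is generated by images of the divisible groups $\Pic^0(C)$ for reduced Cartier curves $C\subset X$. Your approach instead bypasses the $K$-theoretic exact sequence and the Cartier-curve argument entirely, reducing the question to the torsion-freeness of $H^4_{\etl}(X,\Z_l(2))$, which you compute via excision to the regular locus and Poincar{\'e} duality. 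This is more self-contained and arguably more elementary; the trade-off is that the paper's method gets the result essentially for free, since the short exact sequence above is needed anyway for the proof of \thmref{thm:Cyc-iso} that follows. One small typo: your $\Z/n$ in the Poincar{\'e} duality step should read $\Z/l^n$.
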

\begin{proof}
The exact sequence of \thmref{thm:K-2-coh-0} is compatible with
the maps ${\Z}/{l^n} \to {\Z}/{l^{n+1}}$. Taking the direct limit,
we obtain a short sequence
\begin{equation}\label{eqn:lem:Div-0-1}
0 \to H^1(X, \sK_2) \otimes_{\Z} {\Q_l}/{\Z_l} \to 
H^3_{\etl}(X, {\Q_l}/{\Z_l}(2)) \xrightarrow{\tau_X} \CH_0(X)\{l\} \to 0.
\end{equation}
The group on the left is divisible. It is known that $\CH_0(X)_{\deg 0}$ is 
generated by the images of the maps $\Pic^0(C) \to \CH_0(X)$, where
$C \subsetneq X$ is a reduced Cartier curve. Since $\Pic^0(C)$ is
$l^n$-divisible, it follows that
$\CH_0(X)_{\deg 0}$ is $l^n$-divisible.
In particular, $\CH_0(X)\{l\} = \CH_0(X)_{\deg 0}\{l\}$ is also $l^n$-divisible.
The lemma follows.
\end{proof} 


\begin{thm}\label{thm:Cyc-iso}
Given a reduced projective surface $X$ over $k$ and a prime $l \neq p$, the
following hold.
\begin{enumerate}
\item 
$H^1(X, \sK_2) \otimes_{\Z} {\Q_l}/{\Z_l} = 0$.
\item
The map $\tau_X: H^3_{\etl}(X, {\Q_l}/{\Z_l}(2)) \to \CH_0(X)\{l\}$
is an isomorphism.
\end{enumerate}
\end{thm}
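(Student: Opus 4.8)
The plan is to derive part (2) from part (1). The short exact sequence~\eqref{eqn:lem:Div-0-1} already shows that $\tau_X$ is surjective with kernel $H^1(X,\sK_2)\otimes_\Z \Q_l/\Z_l$, so (2) is \emph{equivalent} to (1), and I would spend all the effort on proving that $V := H^1(X,\sK_2)\otimes_\Z \Q_l/\Z_l$ vanishes. Two preliminary observations reduce this to a numerical statement. First, $V$ is automatically divisible, being a tensor product with the divisible group $\Q_l/\Z_l$. Second, $V$ has finite corank: applying \thmref{thm:K-2-coh-0} with $n=l$ produces an injection $H^1(X,\sK_2)/l \hookrightarrow H^3_{\etl}(X,\mu_l(2))$, whose target is finite by finiteness of \'etale cohomology with finite coefficients over an algebraically closed field; since the exact sequence $A\otimes \Z/l \to A\otimes\Q_l/\Z_l \xrightarrow{l} A\otimes\Q_l/\Z_l$ exhibits $(A\otimes\Q_l/\Z_l)[l]$ as a quotient of $A/l$, the corank of $V$ is at most $\dim_{\F_l} H^1(X,\sK_2)/l<\infty$. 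Thus $V\cong (\Q_l/\Z_l)^{s}$ with $s$ finite, and the whole theorem reduces to showing $s=0$.

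To pin down $s$ I would count coranks in~\eqref{eqn:lem:Div-0-1}. By \lemref{lem:Div-0} and the finiteness just noted, all three groups there are divisible $l$-primary groups of finite corank, so the sequence gives
\[
s = \operatorname{corank}\, H^3_{\etl}(X,\Q_l/\Z_l(2)) - \operatorname{corank}\,\CH_0(X)\{l\}.
\]
The strategy is to prove these two coranks coincide. For the cycle term I would use the semi-abelian Albanese $J^2(X)$ of \S\ref{sec:SAQ}: the prime-to-$p$ comparison between $\CH_0(X)_{\deg 0}$ and $J^2(X)$ (the $l\neq p$ case recorded in and around~\eqref{eqn:Div-0-3}) identifies $\operatorname{corank}\CH_0(X)\{l\}$ with $2a+t$, where $a$ and $t$ are the abelian and toric ranks of the $1$-motive $[\Lambda(X)\to \Pic_W(X^N)]$ defining $J^2(X)$. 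For the cohomology term I would compute $H^3_{\etl}(X,\Q_l/\Z_l(2))$ and show its corank is the weight-one contribution to $H^3$, which by Poincar\'e--Lefschetz duality is precisely the Tate module of the same $1$-motive, again $2a+t$. Matching these two numbers forces $s=0$.

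The cleanest way to make both computations simultaneously, and the step I expect to be the main obstacle, is reduction to the normal case. For a normal projective surface the result is \cite[Theorem~7.9]{BPW}, giving $V=0$ and $\tau$ an isomorphism there. I would then compare $X$ with its normalization $\pi\colon X^N\to X$ through the conductor square, whose non-normal locus is a reduced curve $Z\subset X$ with preimage $\wt{Z}\subset X^N$. Both functors $H^3_{\etl}(-,\Q_l/\Z_l(2))$ and the $1$-motive data sit in Mayer--Vietoris type sequences for this square, and all correction terms are supported on the one-dimensional schemes $Z$ and $\wt{Z}$. The key point I would need to verify is that these curve contributions die after $\otimes\,\Q_l/\Z_l$: the relevant $\sK_2$-cohomology of a curve is divisible, while the \'etale cohomology of a curve vanishes in the degrees that enter, so the coranks for $X$ differ from those for $X^N$ by matching amounts on the two sides. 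Carrying this out---identifying the boundary maps for the conductor square and checking that the $Z$- and $\wt{Z}$-contributions cancel on the cycle and cohomology sides alike---is the technical heart; granting it, the corank identity above yields $s=0$, hence $V=0$ and $\tau_X$ is an isomorphism.
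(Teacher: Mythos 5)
Your reduction of (2) to (1) via the exact sequence~\eqref{eqn:lem:Div-0-1}, and the observation that $V=H^1(X,\sK_2)\otimes_{\Z}\Q_l/\Z_l$ is divisible of finite corank so that everything comes down to the corank identity $\operatorname{corank}H^3_{\etl}(X,\Q_l/\Z_l(2))=\operatorname{corank}\CH_0(X)\{l\}$, are all correct and consistent with how the paper sets things up. The problem is that this corank identity \emph{is} the theorem, and the argument you offer for it does not go through as sketched. First, the identification $\operatorname{corank}\CH_0(X)\{l\}=2a+t$ requires the full prime-to-$p$ Roitman theorem for singular varieties (Mallick's Theorem~15), not just~\eqref{eqn:Div-0-3}, which only compares $A^d(X)$ with its semi-abelian quotient. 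Second, and more seriously, your route to $\operatorname{corank}H^3_{\etl}(X,\Q_l/\Z_l(2))=2a+t$ via the conductor square rests on the assertion that ``the \'etale cohomology of a curve vanishes in the degrees that enter.'' It does not: in the descent sequence for the square $(\wt Z\to X^N,\ Z\to X)$ the group $H^2_{\etl}(\wt Z,\mu_{l^n}(2))$ maps onto the kernel of $H^3_{\etl}(X,\mu_{l^n}(2))\to H^3_{\etl}(X^N,\mu_{l^n}(2))$, and for a proper curve this $H^2$ is free of rank the number of proper components. These contributions are exactly what produce the toric rank $t$ of $J^2(X)$ on the other side, so they must be matched against the cokernel of $H^2(X^N)\oplus H^2(Z)\to H^2(\wt Z)$ and the lattice $\Lambda(X)$ of~\eqref{eqn:Weil-Pic}; that matching is the entire analytic content of the statement and you have left it as a black box, justified by a vanishing claim that is false. (The description of the corank of $H^3$ as ``the weight-one contribution'' is also not what is needed; one needs the full dimension of $H^3(X,\Q_l(2))$ to equal $2a+t$, which is the \'etale realization statement for the Albanese $1$-motive of a singular surface --- precisely the hard input.)

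It is worth contrasting this with how the paper avoids ever needing the exact corank equality. The paper factors $\tau_X$ through $J^2(X)$ and proves, at each finite level, that $H^3_{\etl}(X,\mu_{l^n}(2))\to{_{l^n}J^2(X)}$ is surjective by exhibiting a single reduced Cartier curve $C\subset X$ with ${_{l^n}\Pic^0(C)}\surj{_{l^n}J^2(X)}$ (Mallick's Theorem~14). The kernel $T_{l^\infty}(X)$ of the limit map is then shown to satisfy $T_{l^\infty}\otimes\Z/l^n=0$ (using the divisibility from \lemref{lem:Div-0} together with the level-wise surjectivity), and to be \emph{finite} via the one-sided inequality $|H^3_{\etl}(X,\mu_{l^n}(2))|\le N_X\cdot|{_{l^n}J^2(X)}|$ with $N_X$ independent of $n$, obtained by dualizing the argument of \cite[Theorem~2.5.4]{BAS}. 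A finite group with $T\otimes\Z/l^n=0$ is zero. If you want to salvage your approach, you would need to upgrade that inequality to the exact $1$-motive comparison and then carry out the conductor-square bookkeeping honestly; as written, the proposal defers the main difficulty to an unproved step and supports it with an incorrect vanishing statement.
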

\begin{proof}
In view of ~\eqref{eqn:lem:Div-0-1}, the theorem is equivalent to showing that
$\tau_X$ is injective. To show this, it suffices to
prove the stronger assertion that the map 
$\delta:= \rho^{\rm semi}_X \circ \tau_X: H^3_{\etl}(X, {\Q_l}/{\Z_l}(2)) \to 
\CH_0(X)\{l\} \to J^2(X)\{l\}$ is an isomorphism. 

In order to prove this, we first prove a stronger
version of its surjectivity assertion, namely, that for every $n \ge 1$,
the map $H^3_{\etl}(X, \mu_{l^n}(2)) \to {_{l^n}{J^2(X)}}$ is surjective.
In view of \thmref{thm:K-2-coh-0}, it suffices to show that the map
${_{l^n}{\CH_0(X)}} \to {_{l^n}{J^2(X)}}$ is surjective.
To prove this, we use \cite[Theorem~14]{Mallick}, which says that we
can find a reduced Cartier curve $C \subsetneq X$ (a suitable hypersurface
section in a projective embedding) such that
$C \cap X_{\rm reg} \subseteq C_{\rm reg}$ and the induced map
${_{l^n}{\Pic^0(C)}} \to {_{l^n}{J^2(X)}}$ is surjective.
The commutative diagram
\[
\xymatrix@C1pc{
{_{l^n}{\Pic^0(C)}} \ar[r] \ar[d]_{\simeq} & {_{l^n}{\CH_0(X)}} \ar[d] \\
{_{l^n}{J^1(C)}}  \ar[r] & {_{l^n}{J^2(X)}}}
\]
then proves the desired surjectivity where the left vertical arrow is
an isomorphism because the ${\rm Ker}(\Pic^0(C) \surj J^1(C))$
is unipotent, and hence uniquely $l^n$-divisible.

In the rest of the proof, we shall ignore the Tate twist in {\'e}tale
cohomology. Let $\delta_n$ denote the composite map
$H^3_{\etl}(X, \mu_{l^n}(2)) \to {_{l^n}{\CH_0(X)}} \to {_{l^n}{J^2(X)}}$
so that $\delta = {\underset{n}\varinjlim} \ \delta_n$.
Using the above surjectivity, we get a direct system of short exact
sequences
\begin{equation}\label{eqn:Div-0-6}
0 \to T_{l^n}(X) \to H^3_{\etl}(X, \mu_{l^n}(2)) \to {_{l^n}{J^2(X)}} \to 0
\end{equation}
whose direct limit is the short exact sequence
\begin{equation}\label{eqn:Div-0-5}
0 \to T_{l^\infty}(X) \to H^3_{\etl}(X, {\Q_l}/{\Z_l}(2)) \to J^2(X)\{l\} \to 0.
\end{equation}

To prove the theorem, we are only left with showing that $T_{l^\infty}(X) = 0$.
Since $H^3_{\etl}(X, {\Q_l}/{\Z_l}(2)) \simeq {\underset{n}\varinjlim} \
H^3_{\etl}(X, \mu_{l^n}(2)) \simeq {\underset{n}\varinjlim} \
H^3_{\etl}(X, {\Z}/{l^n})$, it follows that
$H^3_{\etl}(X, {\Q_l}/{\Z_l}(2))$ \\
$= H^3_{\etl}(X, {\Q_l}/{\Z_l}(2))\{l\}$.
In particular, $T_{l^\infty}(X)$ is an $l$-primary torsion group.

We next consider a commutative diagram of short exact sequences
\[
\xymatrix@C.8pc{
0 \ar[r] & T_{l^\infty}(X) \ar[r] \ar[d]_{l^n_1} & 
H^3_{\etl}(X, {\Q_l}/{\Z_l}(2)) \ar[r]^-{\delta} \ar[d]^{l^n_2} &
J^2(X)\{l\} \ar[d]^{l^n_3} \ar[r] & 0 \\
0 \ar[r] & T_{l^\infty}(X) \ar[r] & 
H^3_{\etl}(X, {\Q_l}/{\Z_l}(2)) \ar[r]^-{\delta} &
J^2(X)\{l\} \ar[r] & 0,}
\]
in which the vertical arrows are simply multiplication by $l^n$.
It follows from \lemref{lem:Div-0} that the middle vertical arrow is
surjective. We have shown above that
$H^3_{\etl}(X, \mu_{l^n}(2)) \to {_{l^n}{J^2(X)}}$ is surjective.
In particular, the map ${\ker}(l^n_2) \to {\rm Ker}(l^n_3)$ is surjective.
It follows that $T_{l^\infty} \otimes_{\Z} {\Z}/{l^n} = 0$ for every $n \ge 1$. 
We thus only have to show that $T_{l^\infty}$ is finite to finish the proof.

Now, an easy argument that involves dualizing the argument of 
\cite[Theorem~2.5.4]{BAS} (see \cite[Proof of Theorem~15, Claim~2]{Mallick}),
shows that there is a positive integer $N_X$, depending
only on $X$ and not on the integer $n$, such that 
\begin{equation}\label{eqn:Div-0-7}
|H^3_{\etl}(X, \mu_{l^n}(2)))| \le N_X \cdot |{_{l^n}{J^2(X)}}|.
\end{equation}
Combining this with ~\eqref{eqn:Div-0-6}, it follows that
for every $n \ge 1$, either ${_{l^n}{J^2(X)}} = 0$ and hence $T_{l^n}(X) = 0$ or
${_{l^n}{J^2(X)}} \neq 0$  and
$|T_{l^n}(X)| \cdot |{_{l^n}{J^2(X)}}| \le N_X \cdot |{_{l^n}{J^2(X)}}|$. 
In particular, 
either $T_{l^n}(X) = 0$ or $|T_{l^n}(X)| \le N_X$ for every $n \ge 1$. 
But this implies that $T_{l^\infty}(X)$ is finite.
This completes the proof of the theorem. 
\end{proof}

\subsection{Relation with Bloch's construction}
In \cite[\S~2]{Bloch-tor}, Bloch constructed a map
$\lambda_X: \CH_0(X)\{l\} \to H^{2d-1}_{\etl}(X, {\Q_l}/{\Z_l}(2))$
for a smooth projective scheme $X$ of dimension $d \ge 1$ over $k$. 
We end this section with the following lemma that 
explains the relation between Bloch's construction 
and ours when $X$ is a smooth surface.

\begin{lem}\label{lem:Bloch-surface}
If $X$ is a smooth projective surface over $k$, then $\tau_X:
H^3_{\etl}(X, {\Q_l}/{\Z_l}(2)) \to \CH_0(X)\{l\}$ coincides with the negative
of the inverse of Bloch's map
$\lambda_X:  \CH_0(X)\{l\} \to H^3_{\etl}(X, {\Q_l}/{\Z_l}(2))$.
\end{lem}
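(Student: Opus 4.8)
The plan is to prove the identity $\tau_X = -\lambda_X^{-1}$. Since $\tau_X$ is an isomorphism by \thmref{thm:Cyc-iso} and, by Bloch's theorem, $\lambda_X$ is an isomorphism as well, this is equivalent to showing $\tau_X \circ \lambda_X = -\id$ on $\CH_0(X)\{l\}$. Both maps are compatible with the coefficient transitions ${\Z}/{l^m} \to {\Z}/{l^{m+1}}$, so I would first reduce to a statement at each finite level $n = l^m$, comparing Bloch's $\lambda_{X,n}\colon {_{n}\CH_0(X)} \to H^3_{\etl}(X,\mu_n(2))$ with the surjection $\tau_{X,n}\colon H^3_{\etl}(X,\mu_n(2)) \to {_{n}\CH_0(X)}$ extracted from \thmref{thm:K-2-coh-0}, and then pass to the limit. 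The conceptual point is that on a smooth surface both constructions are governed by the comparison, in cohomological degree two, between the Gersten/Bloch--Ogus coniveau spectral sequence computing $H^\bullet(X,\sK_2/n)$ and the one computing $H^\bullet_{\etl}(X,\mu_n(2))$; the only difference between the two maps is the normalization of the degree-two operation used to pass between $K$-theory and {\'e}tale cohomology.

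Concretely, I would recall Bloch's recipe from \cite[\S~2]{Bloch-tor} and observe that, for smooth $X$, it is built out of the Bloch--Ogus cycle-class comparison, i.e. the \emph{tautological} map $\phi$ of diagram \eqref{eqn:K-2-Surface-2}: under the identifications $\CH_0(X) \cong H^2(X,\sK_2)$ and ${_{n}{H^2(X,\sK_2)}} \cong H^2(X,{_{n}\sK_2}) \cong H^2(X,\sH^1(\mu_n(2)))$ (the last via the isomorphism $\nu$ of \lemref{lem:Surface-0*1}), the map $\lambda_{X,n}$ carries the cycle-class normalization attached to $\phi$. By contrast, $\tau_X$ was constructed in \thmref{thm:K-2-coh-0} from Gillet's Chern class $c_2$. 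Thus, up to the Bloch--Ogus identifications, the composite $\tau_{X,n}\circ\lambda_{X,n}$ is the endomorphism of $H^2(X,\sH^1(\mu_n(2)))$ induced by $c_2\circ\phi$.

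At this point the sign falls out of the local relation $c_2\circ\phi = -1$ recorded in \eqref{eqn:K-2-Surface-2}, which is \cite[Lemma~6.2]{BPW}; equivalently, it is the universal discrepancy $c_i = (-1)^{i-1}(i-1)!\,\mathrm{cl}_i$ between Gillet's $i$-th Chern class and the cycle class, specialized to $i=2$, where $(-1)^{2-1}(2-1)! = -1$. Hence $\tau_{X,n}\circ\lambda_{X,n} = -\id$ on ${_{n}\CH_0(X)}$ for every $n = l^m$, and passing to the limit yields $\tau_X\circ\lambda_X = -\id$, i.e. $\tau_X = -\lambda_X^{-1}$.

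The hard part will be to pin down that Bloch's geometric construction really corresponds to the cycle-class map $\phi$, and not already to Gillet's $c_2$ — in other words, that it carries the trivial normalization, so that the composite picks up exactly one factor of $-1$ rather than none. To secure this unambiguously I would use functoriality of both $\lambda$ and $\tau$ under proper push-forward from Cartier curves $C \subsetneq X$: since $\CH_0(X)\{l\}$ is generated by the images of the groups $\Pic^0(C)\{l\}$ (exactly as in the proof of \lemref{lem:Div-0}) and both maps are covariant for such push-forwards, the whole comparison reduces to the case $d = 1$. On a smooth curve $\lambda_C$ and $\tau_C$ are both computed by Kummer theory on $\Pic^0(C)$, where the two normalizations, and hence the precise sign, can be verified by a direct calculation.
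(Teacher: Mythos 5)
Your proposal is correct and its core is the same as the paper's: reduce to the finite levels $n=l^m$, use the vanishing of the top Zariski cohomology of $\sO_X^{\times}$ (hence of $\sH^1(\mu_n(2))\simeq{_{n}\sK_2}$) on a smooth surface to identify both $\H^2(X,\sK_2^{\bullet})$ with $H^1(X,\sK_2/n)$ and $H^3_{\etl}(X,\mu_n(2))$ with $H^1(X,\sH^2(\mu_n(2)))$, compare the two groups through the Galois symbol on the Gersten/Bloch--Ogus complexes, and locate the sign in the discrepancy $c_2\circ\phi=-1$ of \cite[Lemma~6.2]{BPW} between Gillet's $c_2$ (which builds $\tau_X$) and the tautological cycle class (which builds $\lambda_X$). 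The one place you genuinely diverge is in certifying the normalization of Bloch's map: the paper simply reads off from \cite[\S~2]{Bloch-tor} that $\lambda_X$ \emph{is} the limit of the Galois symbols $\beta_{l^m}$ on the $E_2^{1,2}$-term, so no further comparison is needed, whereas you propose to pin it down by pushing forward from Cartier curves and checking the sign via Kummer theory on $\Pic^0(C)$. That route can be made to work, but it costs two extra inputs that you should not treat as free: (i) every $l^m$-torsion class in $\CH_0(X)$ must be the push-forward of an $l^m$-torsion class in $\Pic^0$ of a curve --- this is a Roitman-type lemma (put the rational equivalence witnessing $l^m\alpha=0$ on the curve), not the divisibility statement quoted in \lemref{lem:Div-0}; and (ii) the covariance of $\tau$ under the Gysin map for $C\inj X$, which for the $c_2$-side is not a formal additivity of Chern classes and requires the same kind of Gysin compatibility the paper only establishes for hypersurface sections in \S~\ref{sec:WLD}. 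The paper's direct identification avoids both issues, at the price of leaning on the precise form of Bloch's construction.
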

\begin{proof}
To prove the lemma, we have to go back to the construction of 
$\tau_X: H^3_{\etl}(X, \mu_n(2)) \to {_{l^n}{\CH_0(X)}}$  
in \S~\ref{sec:Surf*} for $n \in k^{\times}$.
Since $X$ is smooth, there is an isomorphism of Zariski sheaves
${\sO^{\times}_X}/n \xrightarrow{\simeq} {\sH}^1({\mu}_{n}(2))  
\xrightarrow{\simeq} {_n\sK_2}$ by \cite[Lemma~6.2]{BPW}. 
Since $H^2(X, \sO^{\times}_X) = 0$, we see that the top
cohomology of all these sheaves vanish. 

This implies that the map $\H^2(X, \sK^{\bullet}_2) \to 
{_{n}{H^2(X, \sK_2)}}$ is simply the 
composite $\H^2(X, \sK^{\bullet}_2)$ \\
$\xrightarrow{\simeq} 
H^1(X, \sK_2/n) \to H^2(X, {_{n}{\sK_2}}) \to {_{n}{H^2(X, \sK_2)}}$.
Moreover, there is a diagram
\begin{equation}\label{eqn:BS-0}
\xymatrix@C1pc{
\H^2(X, \sK^{\bullet}_2) \ar[r]^-{\simeq} \ar@{-->}[d] & 
H^1(X, \sK_2/n) \ar[d]_{\beta_n}^{\simeq} \\
H^3_{\etl} (X, {\mu}_{n}(2)) \ar[r]_-{\simeq} &
H^1(X, {\sH}^2({\mu}_{n}(2))),}
\end{equation}
where the right vertical arrow is the Galois symbol map.
This induces a unique isomorphism $\alpha_n: H^3_{\etl} (X, {\mu}_{n}(2))
\xrightarrow{\simeq} \H^2(X, \sK^{\bullet}_2)$. 
One checks from ~\eqref{eqn:K-2-Surface-4} that $\tau_X$
is simply the composite 
\begin{equation}\label{eqn:BS-1}
H^3_{\etl} (X, {\mu}_{n}(2))
\xrightarrow{\alpha_n} \H^2(X, \sK^{\bullet}_2) \xrightarrow{\simeq}
H^1(X, \sK_2/n) \xrightarrow{\delta_n} {_{n}{H^2(X, \sK_2)}}.
\end{equation}

We also have a commutative diagram
\begin{equation}\label{eqn:BS-2}
\xymatrix@C.8pc{
{K_2(k(X))}/n \ar[r] \ar[d]_{\beta_n} & {\underset{x \in X^{(1)}} \coprod}\
{k(x)^{\times}}/n \ar[d]^{\beta_n} \ar[r] & 
{\underset{y \in X^{(2)}} \coprod}\ {\Z}/n \ar[d]^{\beta_n} \\
H^2_{\etl}(k(X), {\mu}_{n}(2)) \ar[r] & 
{\underset{x \in X^{(1)}} \coprod}\ H^1_{\etl}(k(x), {\mu}_{n}(1)) 
\ar[r] & 
{\underset{y \in X^{(2)}} \coprod}\ H^0_{\etl}(k(y), {\Z}/n),}
\end{equation}
where all vertical arrows are the Galois symbols and are isomorphisms. 

Since the Gersten resolution is universally exact (see \cite{Quillen} and
\cite{Grayson}), the middle
cohomology of the top row is $H^1(X, \sK_2/n)$ and the Bloch-Ogus
resolution (see \cite{Bloch-O}) 
shows that the middle cohomology of the bottom row is
$H^1(X, {\sH}^2({\mu}_{n}(2)))$. 
The isomorphism $\beta_n$ in ~\eqref{eqn:BS-0} is induced by the
vertical arrows of ~\eqref{eqn:BS-2}.

For a fixed prime $l \neq p$, the map $\delta_{l^m}$ in ~\eqref{eqn:BS-1}
becomes an isomorphism on the limit over $m \ge 1$. Assuming this
isomorphism, it follows from ~\eqref{eqn:BS-1} that
$\tau_X = {\underset{m}\varinjlim} \ \alpha_{l^m}$. 
On the other hand, it follows from
Bloch's construction in \cite[\S~2]{Bloch-tor} that
$\lambda_X = {\underset{m}\varinjlim} \ \beta_{l^m}$.
Since each $\alpha_n$ (with $n \in k^{\times}$) 
is the inverse of $\beta_n$ by definition, the lemma follows. 
\end{proof}

\section{Roitman's torsion for separably weakly normal 
surfaces}
\label{sec:finite-dim}
The Roitman torsion theorem says that for a smooth projective scheme of
dimension $d \ge 1$ over $k$, the Abel-Jacobi map $\rho_X: \CH_0(X)_{\deg 0}
\to J^d(X)$ is an isomorphism on the torsion subgroups.
This theorem was extended to normal projective schemes in \cite{KS}.
However, when $X$ has arbitrary singularities, this theorem is known only for
the torsion prime-to-$p$ (see \cite{BS-1} and \cite{Mallick}).
In this section, we extend the Roitman torsion theorem to separably 
weakly normal (see below for definition) 
surfaces. Later in this text, we shall prove a suitable 
generalization of this theorem in higher dimension.
This generalization will be used to prove \thmref{thm:Intro-2}.

\subsection{Separably weakly normal schemes}\label{sec:WN}
The weak normality is a singularity type of schemes, which in
characteristic $p > 0$, is closely related to various $F$-singularities.
These $F$-singularities are naturally encountered while running the minimal 
model program in positive characteristic.
Most of the $F$-regularity conditions imply weak normality.
In characteristic zero, weak normality coincides with the more familiar notion
of semi-normality. In this text, we shall study the Chow group of 0-cycles
on certain singular schemes whose singularities are closely related to
weak normality in positive characteristic.

Let $A$ be a reduced commutative Noetherian ring.
Let $B$ be the integral closure of $A$ in its
total quotient ring. Recall from \cite{Manaresi} that
the semi-normalization of $A$ is the largest among the subrings 
$A'$ of $B$ containing $A$ such that
\begin{enumerate}
\item 
$\forall \ x \in \Spec(A)$, there exists exactly one $x' \in \Spec(A')$ over 
$x$ and
\item
the canonical homomorphism $k(x) \to k(x')$ is an isomorphism.
\end{enumerate}
The weak normalization of $A$ is the largest among the subrings 
$A'$ of $B$ containing $A$ such that
\begin{enumerate}
\item 
$\forall \ x \in \Spec(A)$, there exists exactly one $x' \in \Spec(A')$ over 
$x$ and
\item
the field extension $k(x) \to k(x')$ is finite purely inseparable.
\end{enumerate}

We let $A_s \subset B$ and $A_w \subset B$ denote the semi-normalization
and weak normalization of $A$, respectively. One says that $A$ is
semi-normal (resp. weakly normal) if $A = A_s$ (resp. $A = A_w$).
It is clear from the above definition that $A \subset A_s \subset A_w \subset
B$. Moreover, $A_s = A_w$ if $A$ is a $\Q$-algebra.
To get a more geometric understanding of these singularities, we make the 
following definition.

\begin{defn}\label{defn:Pins}
Let $k$ be a field and let $R$ be a $k$-algebra which is finite as
a $k$-vector space. We shall say that $R$ is {\sl weakly separable}
over $k$ if it is reduced and either ${\rm char}(k) = 0$, 
or ${\rm char}(k) > 0$ and there is no inclusion of rings 
$k \subsetneq K \subset R$
such that $K$ is a purely inseparable field extension of $k$.   
\end{defn}

Recall that a commutative Noetherian ring $A$ is called an $S_2$ ring
if for every prime ideal $\fp$ of $A$, one has ${\rm depth}(A_{\fp}) \ge
{\rm min}({\rm ht}(\fp), 2)$. It follows easily from this definition that
a Cohen-Macaulay ring is $S_2$.

\begin{prop}\label{prop:weakns}
Let $A$ be reduced commutative Noetherian ring. 
Let $B$ be the integral closure of $A$ in its 
total quotient ring and let
$I \subset B$ be the largest ideal which is contained in $A$. 
Assume that $B$ is a finite $A$-module. Then the following hold.
\begin{enumerate}
\item
If $A$ is semi-normal, then $B/I$ is reduced. If $A$ is $S_2$, the 
converse also holds.
\item
If $A$ is weakly normal, then $B/I$ is reduced and the inclusion map
$A/I \to B/I$ is generically weakly separable. 
If $A$ is $S_2$, the converse also holds.
\end{enumerate}
\end{prop}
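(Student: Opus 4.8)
The plan is to organize everything around the conductor square. Since $I$ is the largest ideal of $B$ contained in $A$, it is exactly the conductor $\{b \in B : bB \subseteq A\}$; thus $I$ is a common ideal of $A$ and $B$, one has $A = \{b \in B : b \bmod I \in A/I\}$, and the inclusion $A/I \hookrightarrow B/I$ is injective. In particular, once $B/I$ is reduced, its subring $A/I$ is reduced as well, and for a minimal prime $\fp$ of $V(I)$ both $(A/I)_\fp = \kappa(\fp)$ (a field) and $R_\fp := (B/I)_\fp$ (a finite reduced $\kappa(\fp)$-algebra) are available. I would freely use the elementary criteria (recalled from \cite{Manaresi}): $A$ is seminormal iff every $b \in B$ with $b^2, b^3 \in A$ lies in $A$, and, in characteristic $p$, $A$ is weakly normal iff it is seminormal and every $b \in B$ with $b^p \in A$ lies in $A$ (in characteristic $0$ the two notions agree). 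I also use that seminormality and weak normality are local and that $A_\fp = B_\fp$ whenever $\fp \not\supseteq I$.

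For the forward implication of (1), assume $A$ is seminormal and take $b \in B$ with $b^2 \in I$. For every $c \in B$ one has $(bc)^2 = b^2 c^2 \in I \subseteq A$ and $(bc)^3 = b^2\,(b c^3) \in I \subseteq A$, so the criterion gives $bc \in A$; as $c$ is arbitrary, $bB \subseteq A$, i.e. $b \in I$. Hence $I$ is radical and $B/I$ is reduced. For the forward implication of (2), weak normality gives $A = A_w \supseteq A_s \supseteq A$, so $A$ is seminormal and $B/I$ is reduced by (1), whence $A/I \hookrightarrow B/I$ is an inclusion of reduced rings. To see it is generically weakly separable, I would localize at a minimal prime $\fp$ of $V(I)$ and suppose $R_\fp$ admits a purely inseparable subfield $\kappa(\fp) \subsetneq K \subseteq R_\fp$; choosing $\alpha \in K \setminus \kappa(\fp)$ with $\alpha^p \in \kappa(\fp)$ and lifting it to $b \in B_\fp$, one gets $b^p \in A_\fp$ but $b \notin A_\fp$, contradicting the weak normality of $A_\fp$.

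For the converses I would use the $S_2$ hypothesis, verifying the relevant criterion for an arbitrary $b \in B$ by first establishing $b \in A_\fp$ in codimension $\le 1$. If $\fp \not\supseteq I$ then $b \in B_\fp = A_\fp$. If $\fp$ is a height-one prime containing $I$ — a generic point of the conductor — then $(A/I)_\fp = \kappa(\fp)$, and writing $\bar b$ for the image of $b$ in $R_\fp$: in case (1), $b^2, b^3 \in A$ give $\bar b^2, \bar b^3 \in \kappa(\fp)$, so $\bar b = 0$ if $\bar b^2 = 0$ (reducedness) and $\bar b = \bar b^3(\bar b^2)^{-1} \in \kappa(\fp)$ otherwise, giving $b \in A_\fp$; in case (2), $b^p \in A$ gives $\bar b^p \in \kappa(\fp)$, so $\kappa(\fp)[\bar b] \subseteq R_\fp$ is reduced with $\bar b^p \in \kappa(\fp)$, hence a purely inseparable field extension of $\kappa(\fp)$, which generic weak separability forces to be trivial, again giving $b \in A_\fp$. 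In both cases $A' := A[b]$ is a module-finite reduced subring of $B$ with $M := A'/A$ satisfying $M_\fp = 0$ for $\mathrm{ht}(\fp) \le 1$.

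The crux — and the only place $S_2$ enters — will be to deduce $A' = A$ from the vanishing of $M$ in codimension $\le 1$. Let $\fp \in \mathrm{Ass}(M)$; then $\mathrm{ht}(\fp) \ge 2$, so $\mathrm{depth}\,A_\fp \ge 2$ by $S_2$, whence $H^0_\fp(A_\fp) = H^1_\fp(A_\fp) = 0$. Since $A'$ is reduced and the primes of $A'$ over $\fp$ are non-minimal, $A'_\fp$ has no $\fp$-power torsion, so $H^0_\fp(A'_\fp) = 0$. The long exact local cohomology sequence of $0 \to A_\fp \to A'_\fp \to M_\fp \to 0$ then yields $H^0_\fp(M_\fp) = 0$, contradicting $\fp \in \mathrm{Ass}(M)$; hence $M = 0$ and $b \in A$, so the criterion holds and $A$ is seminormal in (1) and weakly normal in (2). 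I expect this depth/local-cohomology propagation, together with checking that the conductor fibre is controlled by weak separability in precisely the sense of Definition~\ref{defn:Pins} (the subfield, rather than factorwise, formulation is what makes the argument go through), to be the main technical content; the forward implications are comparatively formal.
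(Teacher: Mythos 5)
Your proof is correct, but it takes a genuinely different route from the paper's. The paper is citation-driven: the forward half of (1) is quoted from Traverso, the converses are reduced to codimension one by invoking Greco--Traverso (Theorem 2.6) and Manaresi (Corollary IV.4) and then settled in the one-dimensional case via Bass--Murthy and Yanagihara's Lemma 2 / Proposition 3 applied to the conductor square of the localized ring. You instead work throughout with the elementary closure criteria ($b^2,b^3\in A\Rightarrow b\in A$ for seminormality; additionally $b^p\in A\Rightarrow b\in A$ in characteristic $p$ for weak normality): the forward half of (1) becomes a two-line conductor computation showing $I$ is radical, the forward half of (2) extracts from a purely inseparable subfield of the conductor fibre an element violating the $p$-th root criterion, and for the converses you verify the criterion by hand at primes of height $\le 1$ (where the conductor fibre is a reduced finite algebra over the residue field, so $\bar b=\bar b^3(\bar b^2)^{-1}$ resp.\ the $p$-th root dichotomy forces $\bar b\in\kappa(\fp)$) and then propagate to all of $A$ by the depth/local-cohomology argument on $0\to A_\fp\to A[b]_\fp\to M_\fp\to 0$. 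That last step is exactly the content the paper outsources to the Greco--Traverso and Manaresi codimension-one criteria, so your version is more self-contained and makes the role of $S_2$ completely transparent, at the cost of having to justify the elementary criteria themselves (standard: Swan/Hamann for seminormality, Itoh/Yanagihara for weak normality) and of implicitly assuming equicharacteristic --- in mixed characteristic the weak normality criterion requires $b^q\in A$ \emph{and} $qb\in A$, a caveat the paper's proof shares and which is harmless here since the proposition is only applied to $k$-schemes. Your closing observation that Definition~\ref{defn:Pins} must be read as a condition on subfields of the whole fibre algebra, rather than factorwise separability, is exactly the point that makes both your argument and the paper's appeal to Yanagihara go through.
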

\begin{proof}
If $A$ is semi-normal, then $B/I$ is reduced by \cite[Lemma~1.3]{Traverso}.
Suppose now that $A$ is $S_2$ and $B/I$ is reduced. 
By \cite[Theorem~2.6]{GC}, it suffices to show that $A_{\fp}$ is
semi-normal for every height one prime ideal $\fp$ in $A$.

Let $\fp \subset A$ be a prime ideal of height one.
Let $\{x_1, \cdots , x_r\} \subset B$ be a subset whose image in
$B/A$ generates it as an $A$-module.
We now note that $(A_{\fp}: B_{\fp}) = (A:B)_{\fp}$, where $I := (A:B) =
\{a \in A| aB \subset A\} = {\rm ann}(B/A)$. 
The first equality uses the fact that
$B/A$ is a finite $A$-module so that $I = \stackrel{r}{\underset{i = 1}
\cap} (A: x_i)$ and $(J \cap J')_{\fp} = J_{\fp} \cap J'_{\fp}$
(see \cite[Ex.~4.8]{Matsumura}).
Since ${B_{\fp}}/{I_{\fp}} =(B/I)_{\fp}$ is reduced (by our assumption)
and $A_{\fp}$ is 1-dimensional, it follows from \cite[Proposition~7.2]{BM}
and \cite[Theorem~3.6]{Traverso} that $A_{\fp}$ is semi-normal.  
This proves (1).

Suppose now that $A$ is weakly normal. Then it is clearly semi-normal.
In particular, $B/I$ is reduced by (1). We now show that 
$A/I \inj B/I$ is generically weakly separable. Note that as $A$ is reduced and
generically weakly normal, $I$ must have height at least one in $A$.
Let $\fp$ be a minimal prime
of $I$ in $A$ and let $k(\fp)$ be its residue field. Then 
$\fp A_{\fp} = \fp B_{\fp}$ is the Jacobson radical of $B_{\fp}$
and hence ${B_{\fp}}/{\fp B_{\fp}}$ is a product of finite field extensions of 
$k(\fp)$.

We consider the commutative diagram
\begin{equation}\label{eqn:prop:weakns-1}
\xymatrix@C1pc{
A_{\fp} \ar[r] \ar[d] & B_{\fp} \ar[d] & \\
k(\fp) \ar[r] & {B_{\fp}}/{\fp B_{\fp}} \ar[r]^-{\simeq} & 
\stackrel{s}{\underset{i =1}\prod} k(\fq_i),}
\end{equation}
in which the square is Cartesian, the horizontal arrows are injective finite 
morphisms and the vertical arrows are surjective. 
Since $A_{\fp}$ is weakly normal (see \cite[Theorem~IV.3]{Manaresi})
and $B_{\fp}$ is its integral closure, it means that $A_{\fp}$ is weakly normal
in $B_{\fp}$. We conclude from \cite[Proposition~3]{Yanagihara} that
$k(\fp)$ is weakly normal in $\stackrel{s}{\underset{i =1}\prod} k(\fq_i)$.
We now apply \cite[Lemma~2]{Yanagihara} to deduce that the lower
horizontal arrow in ~\eqref{eqn:prop:weakns-1} is weakly separable,
as desired.

Conversely, suppose that $A$ is $S_2$ and the inclusion  
$A/I \inj B/I$ is generically weakly separable map of reduced schemes. 
By \cite[Corollary~IV. 4]{Manaresi}, it suffices 
to show that $A_{\fp}$ is weakly normal for every height one prime ideal 
$\fp$ in $A$. 

Let $\fp \subset A$ be a prime ideal of height one.
Since $I_{\fp} = (A_{\fp}: B_{\fp})$ as shown above, 
it follows that $A_{\fp} = B_{\fp}$ (hence $A_{\fp}$ is weakly normal)
if $I \not\subset \fp$. 
We can therefore assume that $I \subset \fp$. Since $A$ is generically 
normal (and hence weakly normal), $I$ must have height at least one in $A$. 
It follows that $\fp$ must be a minimal prime ideal of $I$.

It follows from (1) that $A_{\fp}$ is semi-normal.
In particular,  $I_{\fp} =
\fp A_{\fp} = \fp B_{\fp}$. Furthermore, $\fp B_{\fp}$ is
the Jacobson radical of $B_{\fp}$ such that ${B_{\fp}}/{\fp B_{\fp}}$
is a finite product of finite field extensions of $k(\fp)$.
This gives rise to a Cartesian square of rings as in ~\eqref{eqn:prop:weakns-1}.
Our assumption says that the lower horizontal arrow in 
~\eqref{eqn:prop:weakns-1} is weakly separable.  We conclude again from 
\cite[Lemma~2, Proposition~3]{Yanagihara} that $A_{\fp}$ is weakly normal.
This proves 
\nolinebreak 
(2).  
\end{proof}


\begin{exm}\label{example:s-normal}
Using \propref{prop:weakns}, we can construct many examples of semi-normal
rings which are not weakly normal.
Let $k$ be perfect field of characteristic $p > 0$ and
consider the Cartesian square of rings 
\begin{equation}\label{eqn:s-normal-0}
\xymatrix@C1pc{
A \ar[r]^-{\psi} \ar[d] & k[x,t] \ar@{->>}[d]  \\
k[x] \ar[r]^-{\phi} & \frac{k[x,t]}{(t^p-x)}}
\end{equation} 
in which the right vertical arrow is the canonical surjection
and $\phi$ is the canonical inclusion.
In particular, one has $A = \{(f(x), g(x,t))| f(t^p) = g(t^p, t)\}$.
Since $\phi$ is a finite purely inseparable map of reduced rings which is
not an isomorphism, it follows that $\psi$ is not an isomorphism.
It follows from \propref{prop:weakns} that
$A$ is semi-normal but not weakly normal.
\end{exm}


\begin{exm}\label{example:w-normal}
We now provide an example of a weakly normal $S_2$ ring $A$ with normalization
$B$ and reduced conductor $I$ such that the map $A/I \to B/I$ is not 
generically separable.
Let $k$ be a perfect field of characteristic $p > 0$
and consider the Cartesian square 
\begin{equation}\label{eqn:w-normal-0}
\xymatrix@C1pc{
A \ar[r]^-{\psi} \ar[d] & k[x,t] \ar[d] \\
k[x] \ar[r]^-{\phi} & \frac{k[x,t]}{t(t^p-x)},}
\end{equation}
where the right vertical arrow is the canonical quotient map and
the lower horizontal arrow is given by $\phi(x) = x$.
One can easily check (e.g., use Eisenstein's criterion) that
$t^p -x$ is an irreducible polynomial in $k[x,t]$.
If we let $\fq_1 = (t)$ and $\fq_2 = (t^p -x)$, then we see that
$k(x) \to k(\fq_1)$ is an isomorphism and $k(x) \to 
k(\fq_2) \xrightarrow{\simeq} k(x^{1/p})$ is purely inseparable.
In particular, $\phi$ induces a weakly separable map between the function
fields. It follows from \cite[Proposition~3]{Yanagihara} that
$A$ is weakly normal. We just saw however that $\phi$ is not generically
separable. 

Note also that the kernels of the vertical arrows in 
~\eqref{eqn:w-normal-0} are isomorphic and the kernel on the
right is a principal ideal generated by $f(x,t) = t(t^p-x)$.
Since $f(x,t)$ a non-zero-divisor in $k[x,t]$ and lies in $A$, it
must be a non-zero-divisor in $A$. Since $k[x]$ is Cohen-Macaulay,
it follows that $A$ is Cohen-Macaulay too (see \cite[Theorem~17.3]{Matsumura}).
In particular, it is $S_2$. 

\end{exm}


Motivated by Example~\ref{example:w-normal}, we make the following 
definition.

\begin{defn}\label{defn:swn}
Let $A$ be a commutative reduced Noetherian ring with finite normalization
$B$. Let $I \subset B$ be the largest ideal lying in $A$.
We shall say that $A$ is {\sl separably weakly normal} if $B/I$ is
reduced and the induced map $A/I \to B/I$ is generically separable. 
We shall say that a Noetherian scheme $X$ is (separably) weakly normal
(resp. semi-normal) if the coordinate ring of every affine open in $X$ 
is (separably) weakly normal (resp. semi-normal).
\end{defn}

It follows from \propref{prop:weakns} that if $A$ is $S_2$ and
separably weakly normal, then it is weakly normal. 
On the other hand, Example~\ref{example:w-normal} shows that 
a weakly normal $S_2$ ring may not be separably weakly normal.
For $\Q$-algebras, semi-normality implies separably weak normality by
\propref{prop:weakns}.
The three singularity types coincide for $\Q$-algebras which are $S_2$.
Our goal in the rest of this text is to study the torsion in the
Chow group of 0-cycles on schemes which are separably weakly normal.

\begin{remk}\label{remk:Ignore}
Since we only deal with schemes which are separably weakly normal
in the rest of this text, the reader can, in principle, only read 
Definition~\ref{defn:swn} in \S~\ref{sec:WN} and move to the next subsection.
Our discussion of semi-normality and weak normality
is primarily meant to motivate the reader to 
Definition~\ref{defn:swn}, and to give a comparison of various 
non-normal singularity types which are closely related yet 
different.
\end{remk} 

We end this subsection by recalling the notion of conducting ideals 
and conducting subschemes.
Let $A$ be a reduced commutative Noetherian ring and let $B$ be a subring of
the integral closure of $A$ in its total quotient ring such that $A \subset B$.
Assume that $B$ is a finite $A$-module.
Recall that an ideal $I \subset A$ is called a conducting ideal for the 
inclusion $A \subset B$ if $I = IB$. It is clear from this definition
that $I \subset (A:B)$. Furthermore, one knows that
$(A:B)$ is the largest conducting ideal for $A \subset B$ 
(e.g., see \cite[Ex.~2.11]{HS}).
If $X$ is a reduced Noetherian scheme and $f: X' \to X$ is a 
finite birational map,
then a closed subscheme $Y \subset X$ is called a conducting subscheme
if $\sI_Y \subset \sO_X$ is a sheaf of conducting ideals for the
inclusion of sheaves of rings $\sO_X \subset f_*(\sO_{X'})$.

\subsection{The torsion theorem for separably weakly normal surfaces}
\label{sec:TS}
For the remaining part of this section, we shall identify the two 
Chow groups $\CH^{LW}_0(X)$ and $\CH_0(X)$ for curves and
surfaces using \cite[Theorem~3.17]{BK}.
To prove the Roitman torsion theorem for a separably
weakly normal projective surface, we
need the following excision result for our Chow group.
We fix an algebraically closed field $k$.

\begin{lem}\label{lem:WN-0}
Let $X$ be a reduced quasi-projective separably 
weakly normal surface over $k$ and let 
$f:\ov{X} \to X$ denote the 
normalization map. Let $Y \subset X$ denote the smallest conducting
closed subscheme with $\ov{Y} = Y \times_X \ov{X}$. Then there is an
exact sequence 
\begin{equation}\label{eqn:WN-0-1}
SK_1(\ov{X}) \oplus SK_1(Y) \to SK_1(\ov{Y}) \to \CH_0(X) \to \CH_0(\ov{X}) \to
0.
\end{equation}
\end{lem}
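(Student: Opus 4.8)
The plan is to exhibit the displayed four-term sequence as a fragment of a Mayer--Vietoris sequence in Zariski cohomology of the sheaf $\sK_2$ for the conductor square, and to obtain that Mayer--Vietoris sequence from an excision (homotopy-Cartesian) statement for the $K$-theory of this square that is governed by separable weak normality. Write $\sI = (\sO_X : f_*\sO_{\ov X})$ for the conductor sheaf. By the very definition of $Y$ (the smallest conducting subscheme) and of $\ov Y = Y \times_X \ov X$, both $Y$ and $\ov Y$ are cut out by $\sI$, and the square
\[
\xymatrix@C1pc{
\ov Y \ar[r]^-{j} \ar[d]_-{g} & \ov X \ar[d]^-{f} \\
Y \ar[r]_-{i} & X}
\]
is a Milnor square: it is Cartesian and co-Cartesian, with $\sO_X \xrightarrow{\simeq} \sO_Y \times_{\sO_{\ov Y}} f_*\sO_{\ov X}$ (this is the standard conductor square; cf. \lemref{lem:rel-et-2*}).

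Granting such a Mayer--Vietoris sequence, the lemma is immediate. Indeed, under the identifications $\CH_0(Z) \cong H^2(Z, \sK_2)$ for the surfaces $Z = X, \ov X$ (as in the proof of \thmref{thm:K-2-coh-0}, using \cite[Theorem~7]{Levine-1} and \cite[Theorem~3.17]{BK}) and $SK_1(Z) = H^1(Z, \sK_2)$ for $Z = \ov X, Y, \ov Y$, together with $H^2(Y, \sK_2) = H^2(\ov Y, \sK_2) = 0$ (as $Y$ and $\ov Y$ are one-dimensional), the tail
\[
H^1(\ov X, \sK_2) \oplus H^1(Y, \sK_2) \to H^1(\ov Y, \sK_2) \to H^2(X, \sK_2) \to H^2(\ov X, \sK_2) \to 0
\]
of the Mayer--Vietoris sequence is exactly \eqref{eqn:WN-0-1}; in particular the final surjectivity is forced by the vanishing of the next term $H^2(\ov Y, \sK_2)$.

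To produce the Mayer--Vietoris sequence I would show that the conductor square induces a homotopy-Cartesian square of Zariski sheaves of $K$-theory spectra on $X$; since $f, i, fj$ are finite, hence affine, the associated Brown--Gersten descent squares then give a long exact Mayer--Vietoris sequence in the hypercohomology of $\sK$, from which the $\sK_2$-cohomology segment above is extracted by the coniveau spectral sequence (the bookkeeping being the same low-degree degeneration used for \thmref{thm:K-2-coh-0}). Homotopy-Cartesianness is a stalkwise condition: away from $Y$ it is trivial because $f$ is an isomorphism, while at a point $x \in Y$, writing $A = \sO_{X,x}$, $B$ for the semilocal stalk of $f_*\sO_{\ov X}$, and $I = \sI_x$, continuity of $K$-theory reduces it to the assertion that
\[
\xymatrix@C1pc{
K(A) \ar[r] \ar[d] & K(B) \ar[d] \\ K(A/I) \ar[r] & K(B/I)}
\]
is homotopy-Cartesian in the range of degrees relevant to $\sK_2$, i.e. that the birelative groups $K_i(A, B, I)$ vanish for $i \le 2$.

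The vanishing of these birelative $K$-groups is the heart of the matter and the step I expect to be the main obstacle, since $K$-theory famously fails excision for a general Milnor square. This is exactly where the hypothesis enters: by \defref{defn:swn} and \propref{prop:weakns}, separable weak normality of $X$ forces $I$ to be a radical conductor with $B/I$ reduced and with $A/I \to B/I$ generically separable. The birelative obstruction is supported on $\Spec(B/I)$ and is measured by the relative differentials (equivalently relative cyclic homology) of $A/I \to B/I$; I would argue that reducedness together with generic separability makes these relative differentials vanish at the generic points of $\ov Y$, killing the birelative $K_2$ there, and then dispose of the finitely many closed points of $Y$ by a direct computation. Controlling this obstruction in positive characteristic is precisely what distinguishes the separably weakly normal case treated here from the merely weakly normal case, where, as \exref{example:w-normal} shows, the residue extension along the conductor can be inseparable and the argument breaks down.
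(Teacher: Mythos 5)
Your overall architecture---conductor square, excision obstruction measured by birelative $K$-theory, relative differentials killed by generic separability, Levine's formula to pass to $\CH_0$---is the same as the paper's. But the step you yourself flag as the heart of the matter is set up in a way that cannot work: you want the conductor square to induce a stalkwise homotopy-Cartesian square of $K$-theory sheaves, i.e.\ you want the birelative groups $K_i(A,B,I)$ to vanish for $i\le 2$ at every point of $Y$. This is false. By Geller--Weibel \cite{GW}, the birelative $K_1$-sheaf is $\sK_{1,(X,\ov{X},Y)}\simeq {\sI_Y}/{\sI_Y^2}\otimes_{\sO_Y}\Omega^1_{\ov{Y}/Y}$. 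Separable weak normality makes $\ov{Y}\to Y$ a finite \emph{generically} \'etale map of reduced curves, so $\Omega^1_{\ov{Y}/Y}$ is a skyscraper supported at the ramification points---but it is genuinely nonzero there whenever $\ov{Y}\to Y$ ramifies, which it typically does. So the square is not homotopy-Cartesian at those stalks, your plan to ``dispose of the finitely many closed points by a direct computation'' has nothing to dispose of (the local obstruction survives), and the sheaf-level Mayer--Vietoris sequence for $\sK_2$ you want to extract does not exist in the form stated. Controlling birelative $K_2$ stalkwise would be harder still and is likewise unavailable.

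What saves the argument is a \emph{global}, not stalkwise, vanishing, and this is exactly how the paper proceeds: the obstruction sheaf $\sK_{1,(X,\ov{X},Y)}$ has $0$-dimensional support on the curve $Y$, hence $H^1(Y,\sK_{1,(X,\ov{X},Y)})=0$; the Thomason--Trobaugh spectral sequence identifies $K_0(X,\ov{X},Y)$ with precisely this $H^1$, and Bass's vanishing of the birelative sheaves $\sK_{i,(X,\ov{X},Y)}$ for $i\le 0$ gives $K_i(X,\ov{X},Y)=0$ for $i\le -1$. Thus one only gets (and only needs) $K_i(X,\ov{X},Y)=0$ for $i\le 0$, not $i\le 2$. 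A diagram chase in the two relative $K$-theory sequences then yields the Mayer--Vietoris fragment
\[
K_1(\ov{X})\oplus K_1(Y)\to K_1(\ov{Y})\to \wt{K}_0(X)\to \wt{K}_0(\ov{X})\oplus\wt{K}_0(Y)\to \wt{K}_0(\ov{Y}),
\]
which one compares with the units--Pic sequence and cuts down by kernels, using Levine's formula $SK_0(Z)\simeq H^2(Z,\sK_2)\simeq\CH_0(Z)$, to obtain ~\eqref{eqn:WN-0-1}. You should replace your sheaf-level descent claim by this global low-degree birelative vanishing plus diagram chase; the rest of your outline (the role of \defref{defn:swn}, the identification of the end terms, the vanishing of $H^2$ on curves) is consistent with the paper.
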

\begin{proof}
We have a commutative diagram of relative and birelative $K$-theory exact 
sequences:
\begin{equation}\label{eqn:WN-0-2}
\xymatrix@C.8pc{
& & K_0(X, \ov{X}, Y) \ar[d] & & & K_{-1}(X, \ov{X}, Y) \ar[d] \\ 
K_1(X) \ar[r] \ar[d] & K_1(Y) \ar[r] \ar[d] & K_0(X,Y) \ar[r] \ar[d] &
K_0(X) \ar[r] \ar[d] & K_0(Y) \ar[r] \ar[d] & K_{-1}(X,Y) \ar[d] \\
K_1(\ov{X}) \ar[r] & K_1(\ov{Y}) \ar[r] & K_0(\ov{X},\ov{Y}) \ar[r] \ar[d] &
K_0(\ov{X}) \ar[r] & K_0(\ov{Y}) \ar[r] & K_{-1}(\ov{X},\ov{Y}) \ar[d] \\
& & K_{-1}(X, \ov{X}, Y) & & & K_{-2}(X, \ov{X}, Y).}
\end{equation}

Using the Thomason-Trobaugh spectral sequence and the results of Bass 
\cite{Bass} that
the sheaves $\sK_{i, (X, \ov{X}, Y)}$ on $Y$ vanish for $i \le 0$, we see that
$K_i(X, \ov{X}, Y) = 0$ for $i \le -1$. 
It also follows from the spectral sequence that 
$K_0(X, \ov{X}, Y) \simeq H^1(Y, \sK_{1, (X, \ov{X}, Y)})$.
It follows from \cite[Theorem~0.2]{GW} that
$H^1(Y, \sK_{1, (X, \ov{X}, Y)}) \simeq 
H^1(X, {{\sI}_{Y}}/{{\sI}_{Y}^2} \otimes_{\sO_Y} {\Omega}^1_{{\ov Y}/{Y}})$.
Since $X$ is separably weakly normal, it follows from 
Definition~\ref{defn:swn} that
the map $\ov{Y} \to Y$ is a finite generically {\'e}tale map of reduced
schemes.
Hence ${\Omega}^1_{{\ov Y}/{Y}}$ has 0-dimensional support and
we conclude that 
$H^1(X, {{\sI}_{Y}}/{{\sI}_{Y}^2} \otimes_{\sO_Y} {\Omega}^1_{{\ov Y}/{Y}}) = 0$.
In particular, $K_0(X, \ov{X}, Y) = 0$. 

A diagram chase in ~\eqref{eqn:WN-0-2} shows that there is an exact sequence
\[
K_1(\ov{X}) \oplus K_1(Y) \to K_1(\ov{Y}) \to \wt{K}_0(X) \to \wt{K}_0(\ov{X})
\oplus \wt{K}_0(Y) \to \wt{K}_0(\ov{Y}),
\]
where $\wt{K}_0(-) = {\rm Ker}(K_0(-) \to H^0(-, \Z))$.
Using the map of sheaves $\sO^{\times}_X \to f_*(\sO^{\times}_{\ov{X}})$, this 
exact sequence maps to a similar unit-Pic exact sequence 
\[
U(\ov{X}) \oplus U(Y) \to U(\ov{Y}) \to \Pic(X) \to \Pic(\ov{X})
\oplus \Pic(Y) \to \Pic(\ov{Y}).
\]
Taking the kernels and using the Levine's formula 
$SK_0(Z):= {\rm Ker}(\wt{K}_0(Z) \to \Pic(Z)) 
\simeq H^2(Z, \sK_2) \simeq \CH_0(Z)$ for a reduced surface $Z$,
we get ~\eqref{eqn:WN-0-1}.
\end{proof}

\begin{lem}\label{lem:SK-1-general}
Let $Y$ be a reduced curve over $k$ and let $r \ge 0$ denote the number of
irreducible components of $Y$ which are projective over $k$.
If $Y$ is affine, then $SK_1(Y)$ is uniquely divisible. If $Y$ is projective,
then $SK_1(Y)$ is divisible, $SK_1(Y)\{l\} \simeq ({\Q_l}/{\Z_l})^{r}$
for a prime $l \neq p$ and $SK_1(Y)\{p\} = 0$.
\end{lem}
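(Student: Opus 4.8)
The plan is to reinterpret $SK_1$ as a sheaf cohomology group and then reduce, through the normalization, to the case of smooth curves where the tame symbol makes everything explicit. For any reduced curve $Y$ the descent (Brown--Gersten/Thomason--Trobaugh) spectral sequence $H^p(Y,\sK_q)\Rightarrow K_{q-p}(Y)$ has only the columns $p=0,1$ since $\dim Y=1$, so it collapses to a short exact sequence $0\to H^1(Y,\sK_2)\to K_1(Y)\to H^0(Y,\sK_1)\to 0$. Because a commutative local ring has trivial $SK_1$, the Zariski sheaf $\sK_1$ is just $\sO^{\times}_Y$, and the edge map $K_1(Y)\to H^0(Y,\sO^{\times}_Y)$ is the determinant; hence $SK_1(Y)=H^1(Y,\sK_2)$ and the lemma becomes a statement about $H^1(Y,\sK_2)$. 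Since $SK_1$ is additive over connected components and the normalization of a projective (resp.\ affine) component is again projective (resp.\ affine), it suffices after the reduction below to treat one smooth curve of each type.

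The heart of the matter is the smooth case. For $C$ smooth the Gersten resolution (see \cite{Quillen}) identifies $H^1(C,\sK_2)$ with the cokernel of the tame symbol $\partial\colon K_2(k(C))\to \bigoplus_{x} k(x)^{\times}=\bigoplus_{x} k^{\times}$, the sum running over the closed points of $C$ (so $k(x)=k$). As $\bigoplus_x k^{\times}$ is divisible, its quotient $SK_1(C)$ is divisible in every case. When $C$ is affine there is no global constraint relating the points and the cokernel is uniquely divisible: for $C=\A^1$ Milnor's theorem makes $\partial$ split surjective, and in general the nontrivial part is a quotient of $k^{\times}\otimes_{\Z}\Pic(C)$, which is uniquely divisible because $\Pic(C)$ is divisible and $k^{\times}\otimes_{\Z}D$ is torsion-free for $D$ divisible. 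When $C$ is projective, Weil reciprocity says the product map $\bigoplus_x k^{\times}\to k^{\times}$ annihilates $\im(\partial)$, hence descends to a surjection $SK_1(C)\surj k^{\times}$ whose kernel is governed by the uniquely divisible group $k^{\times}\otimes_{\Z}\Pic^0(C)$; the torsion long exact sequence then gives $SK_1(C)\{l\}\cong k^{\times}\{l\}\cong {\Q_l}/{\Z_l}$ for $l\neq p$ and $SK_1(C)\{p\}\cong k^{\times}\{p\}=0$. Summing over the components of $\ov{Y}$ produces one copy of ${\Q_l}/{\Z_l}$ per projective component, i.e.\ $({\Q_l}/{\Z_l})^{r}$, and no $p$-torsion.

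For a general reduced $Y$ I would run the argument of \lemref{lem:WN-0} one dimension lower. Let $\pi\colon\ov{Y}\to Y$ be the normalization, $Z\subset Y$ the conductor subscheme (now $0$-dimensional) and $\ov{Z}=Z\times_Y\ov{Y}$. The conductor square is a Milnor square, and its relative and birelative $K$-theory (via Thomason--Trobaugh together with Bass's vanishing, exactly as in \lemref{lem:WN-0}) yields an exact sequence computing $SK_1(Y)$ from $SK_1(\ov{Y})$ and the $K$-theory of the $0$-dimensional schemes $Z,\ov{Z}$. Since $Z$ and $\ov{Z}$ are semilocal we have $SK_1(Z)=SK_1(\ov{Z})=0$ and $K_1=$ units, while the correction terms are assembled from $k^{\times}$-tori and from $K_2$ of Artinian $k$-algebras. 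For $l\neq p$ all of these are uniquely $l$-divisible, so (granting the point below) $SK_1(Y)$ is divisible and $SK_1(Y)\{l\}\cong SK_1(\ov{Y})\{l\}=({\Q_l}/{\Z_l})^{r}$, which also forces $SK_1(Y)$ to be uniquely divisible when $Y$ is affine.

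The step I expect to be the main obstacle is the $p$-torsion. The birelative term of the conductor square is governed by $\Omega^1_{\ov{Z}/Z}$, a finite-dimensional $k$-vector space and hence a nontrivial $p$-torsion group in characteristic $p$ (and similarly the relative $K_2$ of the Artinian pieces can be $p$-torsion). The assertion $SK_1(Y)\{p\}=0$ forces this additive contribution to feed the Picard$/\wt{K}_0$ level, equivalently the $\G_a$-part of $\CH_0(Y)$ as one already sees for the cuspidal cubic, rather than $SK_1(Y)=H^1(Y,\sK_2)$. Pinning down the variance of the birelative sequence so that these differentials do not reach $SK_1$, whence $SK_1(Y)\{p\}\cong SK_1(\ov{Y})\{p\}=0$ and $SK_1(Y)$ is genuinely $p$-divisible, is the delicate point. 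A secondary obstacle is the smooth projective input that the kernel of $SK_1(C)\surj k^{\times}$ is truly uniquely divisible, i.e.\ that the $k^{\times}\otimes_{\Z}\Pic^0(C)$ contribution carries no torsion, together with the verification that $SK_1(Y)\to SK_1(\ov{Y})$ is surjective.
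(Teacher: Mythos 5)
The paper's own proof is a one-line citation: by \cite[Theorem~5.3]{BPW} one has $SK_1(Y)\simeq (k^{\times})^r\oplus V$ with $V$ uniquely divisible, and every assertion of the lemma is then immediate ($k^{\times}$ is divisible with $k^{\times}\{l\}\simeq{\Q_l}/{\Z_l}$ for $l\neq p$ and $k^{\times}\{p\}=0$ since $k$ is algebraically closed of exponential characteristic $p$, and $r=0$ in the affine case). What you have written is, in effect, an attempt to reprove that structure theorem of Barbieri-Viale--Pedrini--Weibel from scratch, and the attempt is not complete: the two places you yourself flag as ``obstacles'' are precisely where all the mathematical content lives, and neither is resolved.

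Concretely: (i) for a smooth projective curve $C$ over an algebraically closed field, the unique divisibility of $V(C)=\ker(SK_1(C)\surj k^{\times})$ is a genuine theorem (it rests on the computation of torsion in $K_2$ and $SK_1$ of curves via Merkurjev--Suslin/Suslin rigidity, as in \S 5 of \cite{BPW}); your assertion that this kernel is ``governed by $k^{\times}\otimes_{\Z}\Pic^0(C)$'' is not justified and cannot be, since a priori the cokernel of the tame symbol has no such description --- this is the statement to be proved, not a lemma one may quote. (ii) In the conductor-square step the relative and birelative terms in characteristic $p$ (units $1+\fm$ of Artinian local $k$-algebras, relative $K_2$'s, $\Omega^1_{\ov Z/Z}$) are honest nonzero $p$-torsion groups, and the lemma's claim $SK_1(Y)\{p\}=0$ is exactly the claim that none of this survives into $H^1(Y,\sK_2)$; you state explicitly that you cannot pin down why, so the $p$-part of the lemma --- which is the part actually used in the proof of Theorem~\ref{thm:Fin-dim-main} --- is unproved. (You also leave the surjectivity of $SK_1(Y)\to SK_1(\ov Y)$ unchecked.) The efficient repair is not to fill these holes by hand but to invoke \cite[Theorem~5.3]{BPW} directly, which packages all of this; as written, your argument establishes only the prime-to-$p$ statements, and those only modulo the unproved input on smooth projective curves.
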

\begin{proof}
It follows from \cite[Theorem~5.3]{BPW} that 
$SK_1(Y) \simeq (k^{\times})^r \oplus V$, where $V$ is uniquely divisible.
The lemma follows directly from this isomorphism.
\end{proof}

\begin{thm}\label{thm:Fin-dim-main}
Let $X$ be a reduced projective separably weakly normal surface over an 
algebraically closed field $k$ of exponential characteristic $p$.
Then, $A^2(X)$ is a semi-abelian variety and the Abel-Jacobi map
$\rho_X: \CH_0(X)_{\deg 0} \to A^2(X)$ is an isomorphism on the torsion
subgroups.
\end{thm}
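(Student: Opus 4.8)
The plan is to descend everything to the normalization $f\colon \ov{X}\to X$, where the statement is already known. Since $\ov{X}$ is a normal projective surface, it has no conductor in codimension one, so the universal regular quotient $A^2(\ov{X})$ is an abelian variety and $\rho_{\ov{X}}\colon \CH_0(\ov{X})_{\deg 0}\to A^2(\ov{X})$ is an isomorphism on torsion subgroups, including $p$-torsion, by \cite{KS}. Let $Y\subset X$ be the smallest conducting subscheme and $\ov{Y}=Y\times_X\ov{X}$, as in \lemref{lem:WN-0}; both are reduced projective curves, and \defref{defn:swn} guarantees that $\ov{Y}\to Y$ is finite and generically {\'e}tale. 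The excision sequence of \lemref{lem:WN-0} then produces a short exact sequence
\[
0\to T\to \CH_0(X)_{\deg 0}\to \CH_0(\ov{X})_{\deg 0}\to 0,
\]
where $T=\cok\big(SK_1(\ov{X})\oplus SK_1(Y)\to SK_1(\ov{Y})\big)$ is a quotient of $SK_1(\ov{Y})$.

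The next step is to build the matching sequence of commutative algebraic groups. Using the semi-abelian description of \S\ref{sec:SAQ} together with the universal-regular-quotient constructions of \cite{ESV} and \cite{Mallick}, I would produce an exact sequence
\[
0\to G\to A^2(X)\to A^2(\ov{X})\to 0
\]
sitting in a commutative ladder with the Chow sequence above, the vertical maps being the Abel--Jacobi maps, and with $\rho_X$ carrying $T$ into $G=\ker\big(A^2(X)\to A^2(\ov{X})\big)$. Because $A^2(\ov{X})$ is abelian, the group $G$ absorbs both the torus part of the semi-abelian Albanese $J^2(X)$ and the unipotent radical $A^2_{\rm unip}(X)$; thus proving that $G$ is a torus is exactly the assertion that $A^2(X)$ is semi-abelian.

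The heart of the proof, and the step I expect to be the main obstacle, is precisely this: showing that $G$ is a torus, i.e.\ that $A^2_{\rm unip}(X)=0$. This is where separable weak normality is indispensable and where the argument genuinely goes beyond the classical prime-to-$p$ case, for a nontrivial unipotent radical would contribute $p$-torsion $\cong k$ that no divisible Chow-theoretic source can match. I would identify the unipotent radical, via its Lie algebra following \cite{ESV} and \cite{Mallick}, with a cohomology group built from the conductor square; since \defref{defn:swn} forces $\ov{Y}\to Y$ to be generically {\'e}tale, the sheaf ${\Omega}^1_{\ov{Y}/Y}$ has $0$-dimensional support and the additive contribution to $G$ vanishes. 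This is the same vanishing mechanism that kills the birelative term $K_0(X,\ov{X},Y)$ in the proof of \lemref{lem:WN-0}. With $A^2(\ov{X})$ abelian and $G$ a torus, $A^2(X)$ is semi-abelian, which is the first assertion of the theorem.

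It remains to compare torsion across the ladder. For a prime $l\neq p$, the isomorphism $\rho_X\colon \CH_0(X)\{l\}\xrightarrow{\simeq}A^2(X)\{l\}$ follows from \thmref{thm:Cyc-iso} together with the identification ${_{l^n}{A^2(X)}}\xrightarrow{\simeq}{_{l^n}{J^2(X)}}$ of \eqref{eqn:Div-0-3}. For the new $p$-primary part I would reduce to $\ov{X}$: by \lemref{lem:SK-1-general} the curves carry no $p$-torsion, and since $k$ is algebraically closed of characteristic $p$ these $SK_1$-groups are in fact uniquely $p$-divisible; hence $SK_1(\ov{Y})\otimes\Q_p/\Z_p=0$, so $T$ is $p$-divisible, $T\otimes\Q_p/\Z_p=0$, and (using $p$-divisibility of the surface term $SK_1(\ov{X})$) also $T\{p\}=0$. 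The $p$-primary six-term sequence attached to the Chow row then gives $\CH_0(X)\{p\}\xrightarrow{\simeq}\CH_0(\ov{X})\{p\}$, while $G$ being a torus gives $G\{p\}=0=G\otimes\Q_p/\Z_p$ and hence $A^2(X)\{p\}\xrightarrow{\simeq}A^2(\ov{X})\{p\}$. Combining these two isomorphisms with the fact that $\rho_{\ov{X}}$ is an isomorphism on $p$-torsion completes the argument. A secondary technical point to be checked along the way is the $p$-divisibility of $SK_1(\ov{X})$ for the normal projective surface $\ov{X}$, which is needed to conclude $T\{p\}=0$.
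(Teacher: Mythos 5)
Your reduction of the $p$-primary torsion comparison to the normalization is essentially the paper's own argument: the exact sequence $0 \to T \to \CH_0(X)_{\deg 0} \to \CH_0(\ov{X})_{\deg 0} \to 0$ extracted from \lemref{lem:WN-0}, the unique $p$-divisibility of $T$ obtained from \lemref{lem:SK-1-general} together with $SK_1(\ov{X})\otimes {\Q_p}/{\Z_p}=0$ (the paper cites \cite[Theorem~5.6]{Krishna-1} for exactly the ``secondary technical point'' you flag at the end), and the theorem of \cite{KS} for the normal surface $\ov{X}$ all appear there. The genuine gap is in what you yourself call the heart of the proof: the vanishing of $A^2_{\rm unip}(X)$. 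You propose to identify the unipotent radical, via its Lie algebra, with a coherent cohomology group built from the conductor square and then kill it using the $0$-dimensional support of ${\Omega}^1_{\ov{Y}/Y}$. No such description of $A^2(X)$ or of its unipotent part is available in positive characteristic: the paper stresses in \S\ref{sec:SAQ} that beyond existence essentially nothing is known about $A^d(X)$ when $p>1$, and the Lie-algebra computation of \cite{ESV} is a characteristic-zero device. Moreover, the sheaf ${\Omega}^1_{\ov{Y}/Y}$ is what controls the birelative term $K_0(X,\ov{X},Y)$ in \lemref{lem:WN-0} via \cite{GW}; it has no known bearing on ${\rm Lie}(A^2_{\rm unip}(X))$. (The other natural direct route, semi-abelian-ness of $\Pic^0$ of general Cartier curves, also fails here, since hyperplane sections of weakly normal varieties need not be weakly normal in characteristic $p$.) Thus the step on which your whole ladder rests --- $G$ being a torus, hence $A^2(X)\{p\}\simeq A^2(\ov{X})\{p\}$ --- is unsupported.

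The paper avoids this by reversing the logical order: semi-abelian-ness is an \emph{output} of the torsion comparison, not an input. It first shows that $(\CH_0(X)_{\deg 0})_{\rm tors} \to A^2(X)_{\rm tors}$ is surjective, using finitely many reduced complete intersection Cartier curves with $\prod_i \Pic^0(C_i) \surj A^2(X)$ and \cite[Lemma~2.7]{Krishna-1}; it then proves that the composite $\rho^{\rm semi}_X\colon \CH_0(X)_{\deg 0} \to J^2(X)$ into the semi-abelian quotient is injective on torsion --- and this is where your normalization/$SK_1$ excision argument is actually deployed, with $J^2$ in place of $A^2$ so that only known functorialities and the isomorphism $J^2(X)\{p\}\to J^2(\ov{X})\{p\}$ from \cite{KS} are needed. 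Since $A^2_{\rm unip}(X)$ is $p$-primary torsion of bounded exponent, every element of it is the image of a torsion $0$-cycle that dies in $J^2(X)$, so these two facts force $A^2_{\rm unip}(X)=0$, and the torsion isomorphism for $\rho_X$ follows at once. If you replace your Lie-algebra step by this surjectivity-plus-injectivity argument and run your diagram with $J^2$ rather than $A^2$, your proof closes.
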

\begin{proof}
We can find a finite collection of reduced complete intersection Cartier
curves $\{C_1, \cdots , C_r\}$ on $X$ such that the induced map
of algebraic groups $\stackrel{r}{\underset{i =1} \prod} \Pic^0(C_i) \to
A^2(X)$ is surjective (see, for instance, \cite[(7.1), p.~657]{ESV}). 
In characteristic zero, we can further assume by \cite[Corollary~2.5]{CGM}
that each of these curves is weakly normal.
Since a surjective morphism of smooth connected algebraic groups
restricts to a surjective map on their unipotent and semi-abelian parts
and since $\Pic^0(C)$ is semi-abelian if $C$ is a weakly normal curve
(easy to check), it follows that $A^2(X)$ is a semi-abelian variety in
characteristic zero.

In characteristic $p \ge 2$, the surjections
$\stackrel{r}{\underset{i =1} \prod} \Pic^0(C_i) \surj A^2(X) \surj J^2(X)$  
and \cite[Lemma~2.7]{Krishna-1} together imply that 
the induced maps $(\CH^2(X)_{\deg 0})_{\rm tors} \to A^2(X)_{\rm tors}
\to J^2(X)_{\rm tors}$ are also surjective. 
Since $A^2_{\rm unip}(X)$ is a $p$-primary torsion group of
bounded exponent, the theorem will follow if we prove, in any
characteristic, that the composite map 
$\rho^{\rm semi}_X: \CH_0(X)_{\deg 0} \to A^2(X) \to J^2(X)$ is 
injective on the torsion subgroup.

It follows from \cite[Theorem~15]{Mallick} that the map
$\CH_0(X)_{\deg 0}\{l\} \to J^2(X)\{l\}$ 
is an isomorphism for every prime $l \neq p$.
This also follows immediately from \thmref{thm:K-2-coh-0} and
the proof of \thmref{thm:Cyc-iso}.
We can thus assume that $p \ge 2$ and $l = p$.

Let $f: \ov{X} \to X$ be the normalization
and consider the commutative diagram
\begin{equation}\label{eqn:SK-1-curve-1*} 
\xymatrix@C1pc{
\CH_0(X)_{\deg 0}\{p\} \ar[r]^-{f^*} \ar[d] & 
\CH_0(\ov{X})_{\deg 0}\{p\} \ar[d] \\
J^2(X)\{p\} \ar[r] & J^2(\ov{X})\{p\}.}
\end{equation}

The right vertical arrow is an isomorphism by \cite[Theorem~1.6]{KS}.
Thus, it suffices to prove the stronger assertion that the map
$f^*: \CH_0(X)\{p\} \to \CH_0(\ov{X})\{p\}$ is an isomorphism.
It is enough to show that $A := {\rm Ker}(\CH_0(X) \surj \CH_0(\ov{X}))$
is uniquely $p$-divisible.

It follows from
Lemma~\ref{lem:WN-0} that there is an exact sequence
\[
SK_1(\ov{X}) \oplus SK_1(Y) \to SK_1(\ov{Y}) \to A \to 0.
\]
Following the notations of Lemma~\ref{lem:WN-0}, it follows from 
\lemref{lem:SK-1-general} that $SK_1(Y)$ and $SK_1(\ov{Y})$ are
uniquely $p$-divisible. Furthermore, it follows from
\cite[Theorem~5.6]{Krishna-1} that $SK_1(\ov{X}) \otimes {\Q_p}/{\Z_p} = 0$.
An elementary argument now shows that $A$ must be uniquely $p$-divisible.
This finishes the proof.
\end{proof}

\begin{cor}\label{cor:fin-dim-cor}
Let $X$ be a reduced projective separably weakly normal surface over 
$\ov{\F}_p$.
Then $\CH_0(X)_{\deg 0}$ is finite-dimensional. That is, the Abel-Jacobi map
$\rho_X: \CH_0(X)_{\deg 0} \to A^2(X)$ is an isomorphism.
\end{cor}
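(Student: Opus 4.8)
The plan is to derive the corollary directly from Theorem~\ref{thm:Fin-dim-main}, exploiting the special arithmetic of the base field $\ov{\F}_p$. Since $\rho_X$ is surjective by construction and, by Theorem~\ref{thm:Fin-dim-main}, restricts to an isomorphism on torsion subgroups, the statement will follow once I show that over $\ov{\F}_p$ \emph{both} the source $\CH_0(X)_{\deg 0}$ and the target $A^2(X)$ coincide with their own torsion subgroups. Granting this, $\rho_X$ equals its restriction to torsion, which yields an isomorphism $\CH_0(X)_{\deg 0} = (\CH_0(X)_{\deg 0})_{\rm tors} \xrightarrow{\simeq} A^2(X)_{\rm tors} = A^2(X)$; so $\rho_X$ is itself an isomorphism.

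For the target, recall that $A^2(X)$ is a connected commutative algebraic group over $\ov{\F}_p$ (in fact a semi-abelian variety, by Theorem~\ref{thm:Fin-dim-main}). Every such group has only torsion $\ov{\F}_p$-points: its abelian-variety quotient has a finite group of points over each finite subfield $\F_q \subset \ov{\F}_p$, and hence torsion points over $\ov{\F}_p$; its toral part contributes only roots of unity; and its unipotent part is killed by $p$, since in characteristic $p$ the group $\G_a(\ov{\F}_p) = \ov{\F}_p$ is an $\F_p$-vector space. As an extension of torsion groups is torsion, this gives $A^2(X) = A^2(X)_{\rm tors}$.

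For the source, I would invoke the fact already used in the proof of Lemma~\ref{lem:Div-0}, namely that $\CH_0(X)_{\deg 0}$ is generated by the images of the pushforward maps $\Pic^0(C) \to \CH_0(X)$, where $C \subsetneq X$ runs over reduced Cartier curves. For each such curve over $\ov{\F}_p$, the group $\Pic^0(C)$ is the group of $\ov{\F}_p$-points of the generalized Jacobian of $C$, again a connected commutative algebraic group, and hence torsion by exactly the same reasoning as above. Therefore $\CH_0(X)_{\deg 0}$ is a torsion group, completing the reduction.

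I do not expect a serious obstacle at this stage: the entire difficulty has been absorbed into Theorem~\ref{thm:Fin-dim-main}, and the role of the hypothesis $k = \ov{\F}_p$ is precisely to kill the obstruction to finite-dimensionality that is present over, say, $\C$. The only point meriting care is the torsion-ness of $\CH_0(X)_{\deg 0}$, which in positive characteristic ultimately rests on the observation that even the unipotent summands of the relevant algebraic groups have $p$-torsion $\ov{\F}_p$-points, so that no torsion-free part can survive.
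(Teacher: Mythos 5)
Your proposal is correct and follows essentially the same route as the paper: reduce via Theorem~\ref{thm:Fin-dim-main} to showing that $\CH_0(X)_{\deg 0}$ is torsion, and deduce that from the generation of this group by images of $\Pic^0(C)$ for reduced Cartier curves $C$ together with the fact that $G(\ov{\F}_p)$ is torsion for any smooth connected commutative algebraic group $G$. The only (harmless) difference is that you verify separately that $A^2(X)(\ov{\F}_p)$ is torsion, which already follows from the surjectivity of $\rho_X$ once the source is known to be torsion.
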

\begin{proof}
In view of \thmref{thm:Fin-dim-main}, it suffices to show that for
any reduced projective scheme $X$ of dimension $d \ge 1$ over $\ov{\F}_p$,
the group $\CH^{LW}_0(X)_{\deg 0}$ is a torsion abelian group.

Given $\alpha \in \CH^{LW}_0(X)_{\deg 0}$, we can find a reduced Cartier curve
$C \subseteq X$ such that $\alpha$ lies in the image of the push-forward
map $\Pic^0(C) \to \CH^{LW}_0(X)_{\deg 0}$. It is therefore enough to show that
$\Pic^0(C)$ is torsion. But this is a special case of the more general
fact that $G(\ov{\F}_p)$ is torsion whenever $G$ is a 
smooth commutative algebraic group over $\ov{\F}_p$.
\end{proof}

\section{Bloch's torsion theorem for 0-cycles 
with modulus}\label{sec:TEC}
We continue with our assumption that $k$ is algebraically closed with
exponential characteristic $p$. Let $X$ be a smooth projective scheme of
dimension $d \ge 1$ over $k$ and let $D \subset X$ be an effective Cartier
divisor on $X$. We shall prove \thmref{thm:Intro-1} in this section.

\subsection{Relative {\'e}tale cohomology}\label{sec:Et-coh}
Given an {\'e}tale sheaf $\sF$ on $\Sch_k$ and a finite map $f:Y \to X$
in $\Sch_k$, let $\sF_{(X,Y)} := {\rm Cone}(\sF|_{X} \to f_*(\sF|_Y))[-1]$
be the chain complex of {\'e}tale sheaves on $X$. The exactness of
$f_*$ on {\'e}tale sheaves implies that there is a long exact
sequence of {\'e}tale (hyper)cohomology groups
\begin{equation}\label{eqn:rel-et}
0 \to H^0_{\etl}(X, \sF_{(X,Y)}) \to H^0_{\etl}(X, \sF) \to
H^0_{\etl}(Y, \sF) \to H^1_{\etl}(X, \sF_{(X,Y)}) \to \cdots .
\end{equation}

If $Y \inj X$ is a closed immersion with complement $j: U \inj X$,
then one checks immediately from the above definition that
$H^*_{\etl}(X,  \sF_{(X,Y)})$ is canonically isomorphic to 
$H^*_{\etl}(X, j_{!}(\sF|_U))$. We conclude that if $X$ is projective over $k$
and $Y \inj X$ is closed,
then $H^*_{\etl}(X,  \sF_{(X,Y)})$ is same as the {\'e}tale 
cohomology with compact
support $H^*_{c, \etl}(X \setminus Y, \sF)$ of $X \setminus Y$.

Let us now consider an abstract blow-up diagram in $\Sch_k$:
\begin{equation}\label{eqn:rel-et-0}
\xymatrix@C1pc{
Y' \ar[r]^{f'} \ar[d]_{\pi'} & X' \ar[d]^{\pi} \\
Y \ar[r]_{f} & X.}
\end{equation}
This is a Cartesian square in which the horizontal arrows are closed
immersions, $\pi$ is a proper morphism such that 
$X' \setminus Y' \xrightarrow{\simeq} X \setminus Y$. 
The proper base change theorem for the {\'e}tale cohomology implies that
for a torsion constructible {\'e}tale sheaf $\sF$ on $\Sch_k$,
the cohomology groups $H^*_{\etl}(-, \sF)$ satisfy the $cdh$-descent.
In particular, the canonical map 
\begin{equation}\label{eqn:rel-et-1}
\pi^*: H^i_{\etl}(X, \sF_{(X,Y)}) \to H^i_{\etl}(X', \sF_{(X',Y')})
\end{equation}
is an isomorphism for every $i \ge 0$.
We shall write the {\sl relative {\'e}tale cohomology groups}
$H^*_{\etl}(X, \sF_{(X,Y)})$ for a closed immersion $Y \inj X$ 
as $H^*_{\etl}(X|Y, \sF)$.

\subsection{A weak Lefschetz type theorem for the double}
\label{sec:WLD}
We now come back to our situation of $X$ being a smooth projective scheme
of dimension $d \ge 1$ over $k$ and $D \subset X$ an effective Cartier divisor.
Let $\{E_1, \cdots , E_r\}$ be the set of irreducible components of 
$D_{\rm red}$.
If $d \ge 3$, we choose, as in \S~\ref{sec:Lef}, a closed embedding 
$X \inj \P^N_k$ and a smooth hypersurface section 
$\tau:Y = X \cap H_1 \inj X$ such that $Y$ is not contained in $D$
and $Y \cap E_i$ is integral for every $1 \le i \le r$.
We set $F = D \cap Y$. We shall use these notations throughout the
rest of this section.

Given a prime-to-$p$ integer $n$, it is clear from the definition of the 
relative {\'e}tale cohomology and its $cdh$-descent
(see \S~\ref{sec:Et-coh}) associated to the Cartesian 
square ~\eqref{eqn:rel-et-2}
(see \lemref{lem:rel-et-2*}) that the pull-back maps
via the closed immersions $\iota_\pm: X \inj S_X$ induce, for each
$i \ge 0$, a split exact sequence of {\'e}tale cohomology
\begin{equation}\label{eqn:BK-Main-et}
0 \to H^i_{\etl}(X|D, \mu_{n}(j)) \xrightarrow{p_{+, *}} 
H^i_{\etl}(S_X, \mu_{n}(j))  
\xrightarrow{\iota^*_-} H^i_{\etl}(X, \mu_{n}(j))  \to 0.
\end{equation}
Here, the splitting of $\iota^*_-$ is given by the pull-back
$\Delta^*: H^i_{\etl}(X, \mu_{n}(j)) \to  H^i_{\etl}(S_X, \mu_{n}(j))$.

Let us next recall from Gabber's construction \cite{Fujiwara} of the
Gysin morphism for {\'e}tale cohomology  (see, also
\cite[Definition~2.1]{Navarro})
that the regular closed embeddings $\tau:Y \inj X$ and $\tau_1:S_Y \inj S_X$ 
induce Gysin morphisms $\tau_*: H^i_{\etl}(Y, \mu_{n}(j)) \to
H^{i+2}_{\etl}(X, \mu_{n}(j+1))$ and $\tau_{1,*}: H^i_{\etl}(S_Y, \mu_{n}(j)) \to
H^{i+2}_{\etl}(S_X, \mu_{n}(j+1))$ for $i \ge 0$. Furthermore, it follows
from the Cartesian square ~\eqref{eqn:PF-alb-sing-1} and
\cite[Corollary~2.12]{Navarro} (see also
\cite[Proposition~1.1.3]{Fujiwara}) that the pull-back
via the closed immersions $\iota_\pm: X \inj S_X$ induces a commutative diagram
\begin{equation}\label{eqn:PF-alb-sing-3} 
\xymatrix@C1pc{
H^i_{\etl}(S_Y, \mu_{n}(j)) \ar[r]^-{\iota^*_\pm} \ar[d]_{\tau_{1,*}} & 
H^i_{\etl}(Y, \mu_{n}(j)) \ar[d]^{\tau_*} \\
H^{i+2}_{\etl}(S_X, \mu_{n}(j+1)) \ar[r]_-{\iota^*_\pm} & 
H^{i+2}_{\etl}(X, \mu_{n}(j+1)).}
\end{equation}

\begin{lem}\label{lem:Lef-etale}
If $d \ge 3$, then the Gysin maps $\tau_*: H^{2d-3}_{\etl}(Y, \mu_{n}(d-1)) \to
H^{2d-1}_{\etl}(X, \mu_{n}(d))$ and 
$\tau_{1,*}: H^{2d-3}_{\etl}(S_Y, \mu_{n}(d-1)) 
\to H^{2d-1}_{\etl}(S_X, \mu_{n}(d))$ are isomorphisms.
\end{lem}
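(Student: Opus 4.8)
The plan is to deduce the statement for the double $S_X$ from the classical weak Lefschetz theorem by splitting the {\'e}tale cohomology of $S_X$ into an absolute and a relative part via the split exact sequence~\eqref{eqn:BK-Main-et}, thereby reducing everything to a statement about $H^1$. I would first settle the smooth Gysin map $\tau_*$, and then assemble $\tau_{1,*}$ from it together with a relative weak Lefschetz statement.

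For $\tau_*\colon H^{2d-3}_\etl(Y,\mu_n(d-1))\to H^{2d-1}_\etl(X,\mu_n(d))$, I would apply Poincar{\'e} duality on the smooth projective schemes $X$ and $Y$. It identifies $H^{2d-1}_\etl(X,\mu_n(d))$ with $H^1_\etl(X,\Z/n)^\vee$ and $H^{2d-3}_\etl(Y,\mu_n(d-1))$ with $H^1_\etl(Y,\Z/n)^\vee$, turning $\tau_*$ into the dual of the restriction $\tau^*\colon H^1_\etl(X,\Z/n)\to H^1_\etl(Y,\Z/n)$. As $\tau\colon Y\inj X$ is a smooth hyperplane section and $d\ge 3$, the weak Lefschetz theorem makes $\tau^*$ an isomorphism in degree $1<d-1$; hence $\tau_*$ is an isomorphism.

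For $\tau_{1,*}$, I would exploit that~\eqref{eqn:BK-Main-et} is natural in $X$ and that, by~\eqref{eqn:PF-alb-sing-3}, $\iota^*_-\circ\tau_{1,*}=\tau_*\circ\iota^*_-$. Thus $\tau_{1,*}$ carries the relative summand $H^{2d-3}_\etl(Y|F,\mu_n(d-1))={\rm Ker}(\iota^*_-)$ into $H^{2d-1}_\etl(X|D,\mu_n(d))={\rm Ker}(\iota^*_-)$, producing a map of split short exact sequences whose right-hand vertical map is $\tau_*$ and whose middle map is $\tau_{1,*}$. Since $\tau_*$ is already an isomorphism, the five lemma reduces the claim to showing that the induced relative Gysin map $H^{2d-3}_\etl(Y|F,\mu_n(d-1))\to H^{2d-1}_\etl(X|D,\mu_n(d))$ is an isomorphism. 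For this I would use the identification of \S\ref{sec:Et-coh}, namely $H^*_\etl(X|D,-)\cong H^*_{c,\etl}(X\setminus D,-)$ and likewise for $(Y,F)$, under which the relative Gysin map becomes the Gysin map attached to the closed codimension-one immersion $Y\setminus F=(X\setminus D)\cap H_1\inj X\setminus D$ of smooth quasi-projective varieties. Poincar{\'e} duality on $X\setminus D$ (dimension $d$) and on $Y\setminus F$ (dimension $d-1$) then presents this map as the dual of the restriction $H^1_\etl(X\setminus D,\Z/n)\to H^1_\etl(Y\setminus F,\Z/n)$, which is an isomorphism by the Lefschetz theorem for the fundamental group of an open smooth variety: since $\dim(X\setminus D)=d\ge 3$, the general section $Y\setminus F$ induces $\pi_1^\etl(Y\setminus F)\xrightarrow{\simeq}\pi_1^\etl(X\setminus D)$, hence an isomorphism on $H^1(-,\Z/n)=\Hom(\pi_1^\etl(-),\Z/n)$.

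I expect the relative step to be the main obstacle, for two reasons. First, one must check that the relative Gysin map coming from Gabber's Gysin morphism on the doubles agrees, under the compactly supported identification, with the ordinary Gysin map on the open complements; this should follow from the functoriality of Gabber's construction together with the $cdh$-descent and excision properties recorded in \S\ref{sec:Et-coh}, but it is the step that requires care. Second, the cohomological input is the open-variety weak Lefschetz theorem, whose applicability rests precisely on the fact that deleting the Cartier divisor $D$ leaves a smooth variety of dimension $\ge 3$, so that the $\pi_1$-Lefschetz theorem governs the general section $Y\setminus F$. Granting these, the absolute and relative halves recombine through the five lemma to give that $\tau_{1,*}$ is an isomorphism, completing the proof.
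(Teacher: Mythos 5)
Your handling of $\tau_*$ is fine, and your overall architecture for $\tau_{1,*}$ is sound and genuinely different in presentation from the paper's: you split $H^*_{\etl}(S_X)$ into the relative and absolute summands via ~\eqref{eqn:BK-Main-et}, check using ~\eqref{eqn:PF-alb-sing-3} that $\tau_{1,*}$ preserves the splitting, and reduce to a Gysin map on compactly supported cohomology of the smooth open complements, which Poincar{\'e} duality converts into the restriction $H^1_{\etl}(X\setminus D,\Z/n)\to H^1_{\etl}(Y\setminus F,\Z/n)$. The paper instead works directly with the normalization sequence $0\to\Lambda_{S_X}\to\pi_*\Lambda_{X\amalg X}\to\Lambda_D\to 0$ and its analogue on $S_Y$, obtaining the ladder ~\eqref{eqn:Lef-etale-3}, and closes by a four-lemma chase whose two non-obvious inputs are weak Lefschetz for the projective smooth $X,Y$ and the isomorphism $H^{2d-4}_{\etl}(F,\Lambda)\to H^{2d-2}_{\etl}(D,\Lambda)$ on top-degree cohomology, the latter coming from the bijection between irreducible components of $F$ and $D$ forced by the choice of $H_1$ in ~\eqref{eqn:Bertini}. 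Your argument is essentially the Poincar{\'e} dual of this one, and it buys a cleaner conceptual reduction (everything becomes a statement about $H^1$ of open varieties); the paper's version avoids any duality on non-proper schemes and any compatibility check between Gabber's Gysin maps and the compactly supported identification, which is the ``step that requires care'' you yourself flag.

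The one genuine weak point is your final citation. In positive characteristic there is no off-the-shelf ``Lefschetz theorem for $\pi_1^{\etl}$ of an open smooth variety'': for the full {\'e}tale fundamental group such statements fail or are delicate because of wild ramification along $D$, and even the prime-to-$p$ statement you need is not a standard reference one can simply invoke for a general hyperplane section of $X\setminus D$. The assertion you need is true, but it has to be proved, e.g.\ by semi-purity: since $D\subset X$ has pure codimension one with singular locus of codimension $\ge 2$ in $X$, one has $H^1_{D,\etl}(X,\Z/n)=0$ and $H^2_{D,\etl}(X,\Z/n(1))\simeq\bigoplus_{i=1}^r\Z/n\cdot[E_i]$, giving
$0\to H^1_{\etl}(X,\Z/n)\to H^1_{\etl}(X\setminus D,\Z/n)\to\bigoplus_{i=1}^r\Z/n(-1)\to H^2_{\etl}(X,\Z/n)$, and likewise for $(Y,F)$; comparing the two sequences uses weak Lefschetz on $X,Y$ together with the fact that $[E_i]\mapsto[F_i]$ bijectively with multiplicity one. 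At that point you are using exactly the same two inputs as the paper (weak Lefschetz on the compactifications and the component matching from ~\eqref{eqn:Bertini}), so the claimed shortcut through $\pi_1$ does not actually save anything and, as stated, leaves a gap that must be filled before the five-lemma step is justified.
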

\begin{proof}
The first isomorphism is a well known consequence of the weak Lefschetz theorem 
for {\'e}tale cohomology. The main problem is to prove the second
isomorphism. 
Since $k$ is algebraically closed, we shall replace $\mu_n$ by the
constant sheaf $\Lambda = {\Z/n}$.

Recall from \cite[\S~2]{Deligne} that the line bundle $\sO_X(Y)$
(which we shall write in short as $\sO(Y)$) on $X$ 
has a canonical class $[\sO(Y)] \in H^1_{Y, \etl}(X, \G_m)$ and its image
via the boundary map $H^1_{Y, \etl}(X, \G_m) \to H^2_{Y, \etl}(X, \Lambda)$ is 
Deligne's localized Chern class $c_1(Y)$. 
Here, $H^*_{Y, \etl}(X, -)$ denotes the {\'e}tale cohomology with support
in $Y$. On the level of the derived 
category $D^{+}(Y, \Lambda)$, this Chern class is given in terms
of the map $c_1(Y): \Lambda \to \tau^{!}\Lambda(1)[2]$.
Using this Chern class, Gabber's Gysin morphism 
$\tau_*: H^*_{\etl}(Y, \Lambda(j)) \to H^{*+2}_{\etl}(X, \Lambda(j+1))$ is the one
induced on the cohomology by the composite map 
$\tau_*: \tau_*(\Lambda_Y) \to \tau_* \tau^{!}(\Lambda_Y(1)[2]) \to
\Lambda_X(1)[2]$ in $D^{+}(X, \Lambda)$ .

Corresponding to the Cartesian square ~\eqref{eqn:PF-alb-sing-1},
we have $\pi^*(\sO(S_Y)) = \sO(Y \amalg Y)$ and hence we
have a commutative diagram
\begin{equation}\label{eqn:Lef-etale-0}
\xymatrix@C1pc{
H^1_{S_Y,\etl}(S_X, \G_m) \ar[r]^{\partial} \ar[d]_{\pi^*} & 
H^2_{S_Y, \etl}(S_X, \Lambda) \ar[d]^{\pi^*} \\
H^1_{Y \amalg Y,\etl}(X \amalg X, \G_m) \ar[r]_{\partial} & 
H^2_{Y \amalg Y,\etl}(X \amalg X, \Lambda).}
\end{equation}

We thus have commutative diagrams
\begin{equation}\label{eqn:Lef-etale-1}
\xymatrix@C1pc{
\tau_{1, *}(\Lambda_{S_Y}) \ar[r]^-{\tau_{1,*}} \ar[d]_{\pi^*} &
\Lambda_{S_X}(1)[2] \ar[d]^{\pi^*} &
\tau_*(\Lambda_{Y}) \ar[r]^{\tau_*} \ar[d]_{\iota^*_F} & 
\Lambda_{X}(1)[2] \ar[d]^{\iota^*_D} \\
\pi_*\psi_*(\Lambda_{Y \amalg Y}) \ar[r]_-{\pi_* \psi_*} & 
\pi_*\Lambda_{X \amalg X}(1)[2] & \tau_*\iota_{F,*}(\Lambda_F) \ar[r]_-{\tau_*} &
\iota_*(\Lambda_D(1)[2])}
\end{equation}
in $D^{+}(S_X, \Lambda)$ and $D^{+}(X, \Lambda)$.

We next observe that the canonical map $\Lambda_{X \amalg X} \to 
\Lambda_X \oplus \Lambda_X$, induced by the inclusions of the two
components, is an isomorphism.
We thus have a sequence of maps
$\Lambda_{S_X} \xrightarrow{\pi^*} \pi_*(\Lambda_{X \amalg X}) \simeq
\Lambda_{X_+} \oplus \Lambda_{X_-} \to \Lambda_D$, 
where the last map is the difference of two restrictions
$\Lambda_{X_\pm} \to \iota_*(\Lambda_D)$.
Furthermore, it is easy to check that the sequence
\[
0 \to \Lambda_{S_X}(j) \xrightarrow{\pi^*} \pi_*(\Lambda_{X \amalg X}(j)) \to
\Lambda_D(j) \to 0
\]
is exact. A combination of this with ~\eqref{eqn:Lef-etale-1} yields
a commutative diagram of exact triangles in $D^{+}(S_X, \Lambda)$:
\begin{equation}\label{eqn:Lef-etale-2}
\xymatrix@C1pc{
0 \ar[r] & \tau_{1, *}(\Lambda_{S_Y}) \ar[r]^-{\pi^*} \ar[d]_{\tau_{1,*}} &
\pi_*\psi_*(\Lambda_{Y \amalg Y}) \ar[d]^-{\pi_* \psi_*} \ar[r] & 
\tau_{1,*}(\Lambda_F) \ar[d]^{\tau_{1,*}} \ar[r] & 0 \\
0 \ar[r] & \Lambda_{S_X}(1)[2] \ar[r]_-{\pi^*} & \pi_*(\Lambda_{X \amalg X}(1)[2])
\ar[r] & \Lambda_D(1)[2] \ar[r] & 0.}
\end{equation}

Since all the underlying maps in ~\eqref{eqn:PF-alb-sing-1} are finite,
we obtain a commutative diagram of long exact sequence of cohomology
groups
\begin{equation}\label{eqn:Lef-etale-3}
\xymatrix@C.3pc{
H^{2d-4}_{\etl}(Y \amalg Y, \Lambda(d-1)) \ar[r] \ar[d] &
H^{2d-4}_{\etl}(F, \Lambda(d-1)) \ar[r] \ar[d] &
H^{2d-3}_{\etl}(S_Y , \Lambda(d-1)) \ar[r] \ar[d]^{\tau_{1,*}} &
H^{2d-3}_{\etl}(Y \amalg Y, \Lambda(d-1)) \ar[r] \ar[d] & 0 \\
H^{2d-2}_{\etl}(X \amalg X, \Lambda(d)) \ar[r] &
H^{2d-2}_{\etl}(D, \Lambda(d)) \ar[r] &
H^{2d-1}_{\etl}(S_X , \Lambda(d)) \ar[r] &
H^{2d-1}_{\etl}(X \amalg X, \Lambda(d)) \ar[r] & 0.}
\end{equation}

Since $d \ge 3$, it follows from the weak Lefschetz theorem for {\'e}tale
cohomology (see \cite[Theorem~VI.7.1]{Milne}) 
that the vertical arrow on the left end of ~\eqref{eqn:Lef-etale-3}
is surjective and the one on the right end is an isomorphism. 
We next note that the inclusion $F = D \cap H_1 \inj D$ induces 
a bijection between the irreducible components of $F$ and $D$ 
by our choice of the hypersurface $H_1$ (see \S~\ref{sec:Lef}).
It follows from this, together with the isomorphism
$H^{2d-2}_{\etl}(D, \Lambda) \xrightarrow{\simeq} H^{2d-2}_{\etl}(D^N, \Lambda)$ and
\cite[Lemma~VI.11.3]{Milne}, that the second vertical arrow from the left 
in ~\eqref{eqn:Lef-etale-3} is
an isomorphism. A diagram chase now shows that $\tau_{1,*}$ is an
isomorphism. This proves the lemma.
\end{proof}

\begin{remk}\label{remk:WLD-*}
We should warn here that \lemref{lem:Lef-etale} proves an
analogue of the weak Lefschetz theorem only for a specific {\'e}tale
cohomology group. We do not expect this to be true for other
cohomology groups in general and it will depend critically on $D$.
\end{remk}

\subsection{The torsion in Chow group with modulus and relative
{\'e}tale cohomology}\label{sec:Tor-thm}
Let $D \subset X$ be an effective Cartier divisor on a smooth scheme $X$
as above.
Recall from \cite[\S~4, 5]{BK} that there are maps
$p_{\pm, *}: \CH_0(X|D) \to \CH_0(S_X)$ and $\iota^*_{\pm}: \CH_0(S_X) \to 
\CH_0(X)$, where $p_{\pm, *}([x]) = \iota_{\pm, *}([x])$ and
$\iota^*_{\pm}$ is induced by the projection map
$\sZ_0(S_X \setminus D) = \sZ_0(X_+ \setminus D) \oplus  
\sZ_0(X_- \setminus D) \surj \sZ_0(X_\pm \setminus D)$. 
It follows at once from this description of
the projection maps $\iota^*_{\pm}$ that there is a
commutative diagram
\begin{equation}\label{eqn:PF-alb-sing-2} 
\xymatrix@C1pc{
\CH_0(S_Y)_{\deg 0} \ar[r]^-{\iota^*_\pm} \ar[d]_{\tau_{1,*}} & 
\CH_0(Y)_{\deg 0} \ar[d]^{\tau_*} \\
\CH_0(S_X)_{\deg 0} \ar[r]_-{\iota^*_\pm} & \CH_0(X)_{\deg 0}.}
\end{equation}

We shall use the following decomposition theorem from \cite[Theorem~7.1]{BK}.
\begin{thm}\label{thm:BK-Main}
The projection map 
$\Delta: S_X \to X$ induces a flat pull-back
$\Delta^*: \CH_0(X) \to \CH_0(S_X)$ such that $\iota^*_\pm \circ \Delta^*
= {\rm Id}_{\CH_0(X)}$. Moreover, there is a split exact sequence
\begin{equation}\label{eqn:BK-Main-0}
0 \to \CH_0(X|D) \xrightarrow{p_{+, *}} \CH_0(S_X) \xrightarrow{\iota^*_-}
\CH_0(X) \to 0.
\end{equation}
\end{thm}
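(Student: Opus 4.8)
The plan is to isolate the one genuinely hard point (injectivity of $p_{+,*}$) and obtain everything else formally from the good behaviour of the finite flat fold map $\Delta$. First I would establish $\Delta^*$ and its basic properties. Working locally as in \lemref{lem:rel-et-2*}, if $X=\Spec A$ and $D=\Spec A/(f)$ with $f$ a non-zero-divisor, then $S_X=\Spec R$ with $R=A\times_{A/(f)}A$, and the diagonal $A\hookrightarrow R$ realises $R=A\cdot(1,1)\oplus A\cdot(0,f)$ as a free $A$-module of rank $2$. Hence $\Delta$ is finite and flat of degree $2$, étale over $X\setminus D$, so there is a flat pull-back on $0$-cycles with $\Delta^*([x])=[\iota_+(x)]+[\iota_-(x)]$ for every closed point $x\in X\setminus D$. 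To see that $\Delta^*$ respects rational equivalence I would use base change: for a finite map $\nu\colon C\to X$ from a normal curve and $f\in k(C)^\times$, the fibre product $C\times_X S_X$ is the double $S(C,\nu^*D)$, which is a good curve relative to $S_X$ in the sense of \defref{defn:0-cycle-S-1} (the inclusion of a double is l.c.i. and preserves the singular locus, as in \S~\ref{sec:Lef}), and $\Delta^*\,\divf_C(f)$ is the divisor of the pulled-back function on $S(C,\nu^*D)$. Thus $\Delta^*$ carries $\sR_0(X)$ into $\sR_0(S_X)$, and the identity $\iota^*_\pm\circ\Delta^*=\id$ is immediate from the cycle-level formula, since $\iota^*_\pm$ projects $[\iota_+(x)]+[\iota_-(x)]$ onto the single class $[x]$.

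Given $\Delta^*$, the exact sequence and its splitting are purely formal. Writing a class on $S_X$ as $\iota_+(a_+)+\iota_-(a_-)$ with $a_\pm\in\sZ_0(X\setminus D)$, one has, already at the level of cycles, the identity
\begin{equation*}
\iota_+(a_+)+\iota_-(a_-)=p_{+,*}(a_+-a_-)+\Delta^*(a_-),
\end{equation*}
so $\CH_0(S_X)=\im(p_{+,*})+\im(\Delta^*)$. Surjectivity of $\iota^*_-$ follows from $\iota^*_-\Delta^*=\id$, and $\iota^*_-\circ p_{+,*}=0$ because $\im(p_{+,*})$ is supported on $X_+$. If $\beta\in\ker(\iota^*_-)$, write $\beta=p_{+,*}(a)+\Delta^*(b)$; applying $\iota^*_-$ gives $0=\iota^*_-\Delta^*(b)=b$ in $\CH_0(X)$, so $\Delta^*(b)=0$ and $\beta=p_{+,*}(a)\in\im(p_{+,*})$. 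This shows $\ker(\iota^*_-)=\im(p_{+,*})$, with $\Delta^*$ splitting $\iota^*_-$.

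The main obstacle is the injectivity of $p_{+,*}$, which does \emph{not} follow formally: applying $\iota^*_+$ only shows that $a$ dies in $\CH_0(X)$, not in the finer group $\CH_0(X|D)$. Here I would analyse a relation witnessing $\iota_+(a)\in\sR_0(S_X)$, that is, a good curve $\nu\colon(C,Z)\to S_X$ and $g\in\sO^\times_{C,Z}$ with $\nu_*\divf(g)=\iota_+(a)$. The idea is to separate $C$ into the closure $C_+$ of its part over $X_+\setminus D$ and the closure $C_-$ of its part over $X_-\setminus D$, meeting along the finite set $\nu^{-1}(D)\subseteq Z$. Passing to the normalisation of $C_+$, the requirement that $g$ be regular and invertible along $Z\supseteq\nu^{-1}(D)$ (built into the definition of $\sO^\times_{C,Z}$, since $(S_X)_{\sing}=D_{\rm red}$) is precisely what forces the restriction of $g$ to lie in the modulus group $G(\ov{C_+},\varphi^*D)$ of \S~\ref{def:DefChowMod1}; one then checks that the contribution of $C_-$ together with the glued points over $D$ cancels, so that $a$ is a divisor of a function with modulus $D$ and hence vanishes in $\CH_0(X|D)$. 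Carrying this out for an arbitrary good curve requires Bloch's moving lemma to bring the curve into a position where its components meet $D$ well and the normalisation is controlled; this reduction, together with the bookkeeping of multiplicities along $D$, is the technical heart of the argument and is exactly the content of \cite[Theorem~7.1]{BK}.
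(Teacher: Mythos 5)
The paper offers no proof of this statement: it is quoted verbatim from \cite[Theorem~7.1]{BK} (``We shall use the following decomposition theorem from \cite[Theorem~7.1]{BK}''), so there is no internal argument to compare against. That said, your formal skeleton is the correct one and matches the structure of the cited result: the computation $R = A\cdot(1,1)\oplus A\cdot(0,f)$ showing $\Delta$ is finite flat of degree $2$ is right, the identity $\iota_+(a_+)+\iota_-(a_-) = p_{+,*}(a_+-a_-)+\Delta^*(a_-)$ at the cycle level does reduce the whole theorem to the injectivity of $p_{+,*}$, and you correctly isolate that injectivity as the only non-formal point. One small gap in the formal part: to see that $\Delta^*$ carries $\sR_0(X)$ into $\sR_0(S_X)$ you must first present the relations on the smooth scheme $X$ by functions that are regular and invertible along $\nu^{-1}(D)$ (otherwise the pullback to $S(C,\nu^*D)$ does not lie in $\sO^{\times}_{S(C,\nu^*D),Z}$ with $Z$ containing the preimage of $(S_X)_{\rm sing}$); this needs the moving lemma the paper invokes right after Definition~\ref{def:DefChowMod-Definition}, and should be said explicitly.

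The genuine gap is in your sketch of the injectivity of $p_{+,*}$. You claim that the condition $g\in\sO^{\times}_{C,Z}$ with $Z\supseteq\nu^{-1}(D)$ ``is precisely what forces the restriction of $g$ to lie in the modulus group $G(\ov{C_+},\varphi^*D)$''. This is false: membership in $\sO^{\times}_{C,Z}$ only requires $g$ to be a unit in the local rings at points of $Z$, whereas $G(\ov{C_+},\varphi^*D)=\ker\bigl(\sO^{\times}_{\ov{C_+},\varphi^*D}\to\sO^{\times}_{\varphi^*D}\bigr)$ requires $g\equiv 1$ on the (possibly non-reduced) divisor $\varphi^*D$, a much stronger condition that an arbitrary unit along $Z$ has no reason to satisfy. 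The mechanism that actually produces the modulus condition is different: since $g$ is a single regular invertible function on the good curve $C\subset S_X$, its restrictions $g_+$ and $g_-$ to the two branches $C\cap X_{\pm}$ agree along $C\cap D$ (they glue to one function on the curve obtained by identifying the branches along $D$), and it is the comparison of $g_+$ with $g_-$ --- for instance their ratio, after a moving argument that puts the two branches over a common curve in $X$ --- that is congruent to $1$ modulo $\nu^*D$ and hence yields a relation in $\CH_0(X|D)$. Without this ingredient your argument produces no function with modulus at all; and since you ultimately defer this step to \cite[Theorem~7.1]{BK}, which is the very statement being proved, the proposal is circular exactly where the content lies.
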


\begin{thm}\label{thm:Torsion-Bloch}
Let $X$ be a smooth projective scheme of dimension $d \ge 1$
over an algebraically closed field $k$ of exponential characteristic $p$.
Then for any prime $l \neq p$, there is an isomorphism 
\[
\lambda_{X|D}: 
\CH_0(X|D)\{l\} \xrightarrow{\simeq}  H^{2d-1}_{\etl}(X|D, {\Q_l}/{\Z_l}(d)).
\]
\end{thm}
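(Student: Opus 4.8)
The plan is to reduce everything to the double $S_X = S(X,D)$ and then to induct on $d = \dim X$. By \thmref{thm:BK-Main} the map $p_{+,*}$ realizes $\CH_0(X|D)$ as the kernel of $\iota^*_-\colon \CH_0(S_X)\to \CH_0(X)$, so on $l$-primary torsion (which, as $S_X$ is connected for $D\neq\emptyset$, sits in the degree-zero part) we obtain a split short exact sequence
\[
0 \to \CH_0(X|D)\{l\} \xrightarrow{p_{+,*}} \CH_0(S_X)\{l\} \xrightarrow{\iota^*_-} \CH_0(X)\{l\} \to 0 .
\]
Taking the filtered colimit over $n = l^m$ in \eqref{eqn:BK-Main-et} gives the parallel split sequence for $H^{2d-1}_{\etl}(-,{\Q_l}/{\Z_l}(d))$. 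Since Bloch's theorem already supplies an isomorphism for the smooth summand $\CH_0(X)\{l\}$, it suffices to carry the complementary relative summand through the induction, making every comparison map compatible with the projection $\iota^*_-$ and invoking the five lemma.

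First I would settle the base cases. The case $d=1$ is classical: both sides compute the $l$-primary torsion of the generalized Jacobian, matched through relative $H^1$. For $d=2$, $S_X$ is a reduced projective surface and \thmref{thm:Cyc-iso} provides an isomorphism $\tau_{S_X}\colon H^3_{\etl}(S_X,{\Q_l}/{\Z_l}(2)) \xrightarrow{\simeq} \CH_0(S_X)\{l\}$. The construction of $\tau$ in \S~\ref{sec:K-2-surf} is natural for pullback, hence $\tau$ commutes with $\iota^*_-$; comparing $\tau_{S_X}$ with the isomorphism $\tau_X$ for the smooth surface $X$ (the classical case, cf. \lemref{lem:Bloch-surface}) via the five lemma shows that $\tau_{S_X}$ restricts to an isomorphism on the relative summand, whose inverse is the sought $\lambda_{X|D}$.

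For the inductive step I assume $d\ge 3$ and that the theorem holds in dimension $d-1$. I choose the smooth hyperplane section $\tau\colon Y = X\cap H_1 \hookrightarrow X$ together with its double $S_Y$ as in \S~\ref{sec:WLD}, and set $F = D\cap Y$. The pushforward $\tau_{1,*}$ on $\CH_0(S_\bullet)\{l\}$ is an isomorphism by \propref{prop:PF-alb-sing}, while the Gysin map $\tau_{1,*}\colon H^{2d-3}_{\etl}(S_Y,\cdot)\to H^{2d-1}_{\etl}(S_X,\cdot)$ is an isomorphism by \lemref{lem:Lef-etale} after passing to the colimit over $n=l^m$. By \eqref{eqn:PF-alb-sing-2} and \eqref{eqn:PF-alb-sing-3} both pushforwards commute with $\iota^*_-$, and the corresponding maps $\tau_*$ on the smooth summands $\CH_0(Y)\{l\}\to\CH_0(X)\{l\}$ and $H^{2d-3}_{\etl}(Y,\cdot)\to H^{2d-1}_{\etl}(X,\cdot)$ are isomorphisms by classical weak Lefschetz (for the Albanese together with Roitman's theorem for smooth varieties in the first case, and by the first isomorphism of \lemref{lem:Lef-etale} in the second). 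A five lemma applied to the two split sequences then yields isomorphisms of relative summands $\CH_0(Y|F)\{l\}\xrightarrow{\simeq}\CH_0(X|D)\{l\}$ and $H^{2d-3}_{\etl}(Y|F,\cdot)\xrightarrow{\simeq}H^{2d-1}_{\etl}(X|D,\cdot)$. Composing these with the inductive isomorphism $\lambda_{Y|F}$ defines $\lambda_{X|D}$ and exhibits it as an isomorphism.

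The two genuinely hard inputs, namely the weak Lefschetz isomorphism for the relative \'etale cohomology of the double (\lemref{lem:Lef-etale}) and the Lefschetz isomorphism for the semi-abelian Albanese (\propref{prop:PF-alb-sing}), are already in hand, so the remaining difficulty is organizational rather than geometric. The hard part will be the systematic bookkeeping that keeps every map compatible with the splitting into relative and smooth summands: one must know that $\tau$ in the surface case, and both the cycle and the \'etale pushforwards in the inductive step, commute with $\iota^*_-$, so that the five-lemma descent to the relative summand is legitimate. A minor but necessary point is the exactness of the filtered colimit $\varinjlim_m \mu_{l^m}(d) = {\Q_l}/{\Z_l}(d)$, which upgrades the finite-coefficient Lefschetz statements to ${\Q_l}/{\Z_l}$ coefficients.
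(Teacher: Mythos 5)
Your proposal is correct and follows essentially the same route as the paper: reduction to the double via \thmref{thm:BK-Main} and ~\eqref{eqn:BK-Main-et}, base cases from the Kummer sequence and \thmref{thm:Cyc-iso}, and induction on dimension using \propref{prop:PF-alb-sing} and \lemref{lem:Lef-etale}. The only difference is bookkeeping: the paper carries the induction on an isomorphism $\lambda_{S_X}$ for the double compatible with $\iota^*_\pm$ (checked via a cube diagram using the compatibility of Bloch's map with Gysin pushforward) and extracts the relative statement at the end, whereas you induct on $\lambda_{Y|F}$ directly and transfer through five-lemma isomorphisms of the relative summands, which proves the same theorem.
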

\begin{proof}
If $D = \emptyset$, we take $\lambda_{X|D}$ to be the
isomorphism $\lambda_X: \CH_0(X)\{l\} \xrightarrow{\simeq}  
H^{2d-1}_{\etl}(X, {\Q_l}/{\Z_l}(d))$ given by Bloch \cite[\S~2]{Bloch-tor}.

We now assume $D \neq \emptyset$ and let $S_X$ be the double of $X$ along $D$. 
We shall first prove by induction on $d$ that there exists
an isomorphism 
\begin{equation}\label{eqn:extra}
\lambda_{S_X}: \CH_0(S_X)\{l\} \xrightarrow{\simeq}  
H^{2d-1}_{\etl}(S_X, {\Q_l}/{\Z_l}(d))
\end{equation}
such that $\iota^*_\pm \circ \lambda_{S_X} = \lambda_X \circ \iota^*_\pm$.

When $d =1$,
it follows easily from the Kummer sequence and \cite[Proposition~1.4]{LW}
that there is a natural isomorphism
$H^1_{\etl}(C, \mu_n(1)) \xrightarrow{\simeq} {_{n}{\CH_0(C)}}$ for any
reduced curve $C$ over $k$ and any integer $n \ge 1$ prime to $p$.
The naturality of this isomorphism proves our assertion.
Note that this isomorphism coincides with that of Bloch when
$C$ is smooth, as one directly checks 
(or see \cite[Proposition~3.6]{Bloch-tor}).


We next assume $d = 2$. In this case, we have shown in 
\thmref{thm:K-2-coh-0} that there is a homomorphism
$\tau_Y: H^3_{\etl}(Y, \mu_n(2)) \to {_{n}{\CH_0(Y)}}$ for any reduced 
quasi-projective surface $Y$ over $k$ and any integer $n$ prime to $p$. 
We claim that the diagram
\begin{equation}\label{eqn:dim-2-com}
\xymatrix@C1pc{
H^3_{\etl}(S_X, \mu_n(2)) \ar[r]^-{\tau_{S_X}} \ar[d]_{\iota^*_-} &  
{_{n}{\CH_0(S_X)}} \ar[d]^{\iota^*_-} \\
H^3_{\etl}(X, \mu_n(2)) \ar[r]_-{\tau_X} &  
{_{n}{\CH_0(X)}}}
\end{equation}
commutes.

To prove this, recall from the construction of $\tau_Y$
in \S~\ref{sec:K-2-surf} that there are isomorphisms
$H^3_{\etl}(Y, \mu_n(2)) \xrightarrow{\simeq} \H^2(Y, \sK^{\bullet}_2)$ and
$\CH_0(Y)  \xrightarrow{\simeq} \CH^{LW}_0(X) \xrightarrow{\simeq} 
H^2(Y, \sK_2)$ (see \cite[Theorem~3.17]{BK})
which are clearly functorial for the normalization map $Y^N \to Y$.
Since $X_\pm$ are two disjoint components of the normalization
$S^N_X$, we see that these isomorphisms are functorial for the
inclusions $\iota_\pm: X_\pm \inj S_X$.

For a surface $Y$, the map $\tau_Y$ is then
the natural map $\H^2(Y, \sK^{\bullet}_2) \to {_{n}{H^2(Y, \sK_2)}}$
obtained via the exact sequence $\H^2(Y, \sK^{\bullet}_2) \to 
H^2(Y, \sK_2) \xrightarrow{n} H^2(Y, \sK_2)$.
The commutativity of ~\eqref{eqn:dim-2-com} then follows immediately
from the naturality of the complex of Zariski sheaves on 
$\sK_2[-1] \to \sK^{\bullet}_2 \to \sK_2$ on $\Sch_k$.
This proves the claim.

\thmref{thm:Cyc-iso} says that $\tau_Y:H^{3}_{\etl}(Y, {\Q_l}/{\Z_l}(2))
\xrightarrow{\simeq} \CH_0(Y)\{l\}$ is an isomorphism 
on the limit for every prime $l \neq p$.
We let $\lambda_Y: \CH_0(Y)\{l\} \to H^{3}_{\etl}(Y, {\Q_l}/{\Z_l}(2))$
be the negative of the inverse of this isomorphism.
When $Y$ is smooth over $k$, it follows from 
\lemref{lem:Bloch-surface} that $\lambda_Y$ agrees with Bloch's isomorphism.
It follows then from ~\eqref{eqn:dim-2-com} that 
\begin{equation}\label{eqn:dim-2-com*}
\iota^*_\pm \circ \lambda_{S_X} = \lambda_X \circ \iota^*_\pm.
\end{equation}

We now assume $\dim(X) \ge 3$. We choose a closed embedding $X \inj \P^N_k$
and hypersurfaces $H_1, \cdots, H_{d-2}$ as in ~\eqref{eqn:Bertini}. 
We continue to use the notations of \propref{prop:PF-alb-sing}. 
We set $Y = X_1 := X \cap H_1$ and assume by induction that
there is an isomorphism $\lambda_{S_Y}: \CH_0(S_Y)\{l\} \xrightarrow{\simeq}  
H^{2d-3}_{\etl}(S_Y, {\Q_l}/{\Z_l}(d-1))$ such that the diagram
\begin{equation}\label{eqn:Torsion-Bloch}
\xymatrix@C1pc{
\CH_0(S_Y)\{l\} \ar[r]^-{\lambda_{S_Y}} \ar[d]_{\iota^*_\pm} & 
H^{2d-3}_{\etl}(S_Y, {\Q_l}/{\Z_l}(d-1)) \ar[d]^{\iota^*_\pm} \\
\CH_0(Y)\{l\} \ar[r]_-{\lambda_{Y}} &
H^{2d-3}_{\etl}(Y, {\Q_l}/{\Z_l}(d-1))} 
\end{equation}
commutes.

We now consider the diagram
\begin{equation}\label{eqn:Torsion-Bloch-0}
\xymatrix@C1pc{
\CH_0(S_Y)\{l\} \ar[r]^-{\lambda_{S_Y}} \ar[d]_{\tau_{1,*}} &  
H^{2d-3}_{\etl}(S_Y, {\Q_l}/{\Z_l}(d-1)) \ar[d]^{\tau_{1,*}} \\
\CH_0(S_X)\{l\} \ar@{.>}[r]_-{\lambda_{S_X}} & 
H^{2d-1}_{\etl}(S_X, {\Q_l}/{\Z_l}(d)).}
\end{equation}

It follows from \propref{prop:PF-alb-sing} that the left vertical arrow is
an isomorphism and \lemref{lem:Lef-etale} says that the right vertical
arrow is an isomorphism. Since $\lambda_{S_Y}$ is an isomorphism too,
it follows that there is a unique isomorphism
$\lambda_{S_X}: \CH_0(S_X)\{l\} \xrightarrow{\simeq} 
H^{2d-1}_{\etl}(S_X, {\Q_l}/{\Z_l}(d))$ such that ~\eqref{eqn:Torsion-Bloch-0}
commutes.

We have to check that $\iota^*_\pm \circ \lambda_{S_X} = \lambda_{X} \circ 
\iota^*_{\pm}$. For this, we consider the diagram
\[
\xymatrix@C.001pc{
\CH_0(S_Y)\{l\} \ar[dd]_-{\lambda_{S_Y}} \ar[dr]_{\tau_{1,*}} 
\ar[rr]^{\iota^*_\pm} & & \CH_0(Y))\{l\} \ar[dd]^>>>>>>>{\lambda_{Y}} \ar[dr]^{\tau_*} & \\
& \CH_0(S_X))\{l\} \ar[rr]_<<<<<<<<<<<{\iota^*_\pm} \ar[dd]^>>>>>>>{\lambda_{S_X}} & & 
\CH_0(X))\{l\} \ar[dd]^{\lambda_X} \\
H^{2d-3}_{\etl}(S_Y, {\Q_l}/{\Z_l}(d-1)) \ar[rr]^<<<<<<<<<<<{\iota^*_\pm} 
\ar[dr]_{\tau_{1,*}} & &
H^{2d-3}_{\etl}(Y, {\Q_l}/{\Z_l}(d-1)) \ar[dr]_{\tau_*} & \\
&  H^{2d-1}_{\etl}(S_X, {\Q_l}/{\Z_l}(d)) \ar[rr]_-{\iota^*_\pm} & &
H^{2d-1}_{\etl}(X, {\Q_l}/{\Z_l}(d)).}
\]

We need to show that the front face of this cube commutes. Since
$\tau_{1,*}:\CH_0(S_Y)\{l\} \to \CH_0(S_X)\{l\}$ is an isomorphism, it 
suffices to show that all other faces of the cube commute.
The top face commutes by ~\eqref{eqn:PF-alb-sing-2} and
the bottom face commutes by ~\eqref{eqn:PF-alb-sing-3}.
The left face commutes by ~\eqref{eqn:Torsion-Bloch-0} and the right face
commutes by \cite[Proposition~3.3]{Bloch-tor}. Finally, the back face commutes
by ~\eqref{eqn:Torsion-Bloch} and we are done.

To finish the proof of the theorem, we use ~\eqref{eqn:BK-Main-et} and
\thmref{thm:BK-Main} and consider the diagram
of split exact sequences:
\begin{equation}\label{eqn:Torsion-Bloch-2}
\xymatrix@C1pc{
0 \ar[r] & \CH_0(X|D)\{l\} \ar[r]^-{p_{+,*}} \ar@{-->}[d] &
\CH_0(S_X)\{l\} \ar[r]^-{\iota^*_-} \ar[d]^{\lambda_{S_X}} &
\CH_0(X)\{l\} \ar[d]^{\lambda_X} \ar[r] & 0 \\
0 \ar[r] & H^{2d-1}_{\etl}(X|D, {\Q_l}/{\Z_l}(d)) \ar[r]_-{p_{+,*}} &
H^{2d-1}_{\etl}(S_X, {\Q_l}/{\Z_l}(d)) \ar[r]_-{\iota^*_-} &
H^{2d-1}_{\etl}(X, {\Q_l}/{\Z_l}(d)) \ar[r] & 0.}
\end{equation}

It follows from ~\eqref{eqn:extra} that the square on the right is commutative.
Moreover, the maps $\lambda_{S_X}$ and $\lambda_X$ are both isomorphisms.
We conclude that there is an isomorphism
$\lambda_{X|D}: \CH_0(X|D)\{l\} \xrightarrow{\simeq} 
H^{2d-1}_{\etl}(X|D, {\Q_l}/{\Z_l}(d))$.
\end{proof}

\subsection{Applications}\label{sec:Appl}
We now deduce two applications of \thmref{thm:Torsion-Bloch}.
Since the {\'e}tale cohomology of $\mu_n(j)$ is nil-invariant whenever
$(n,p) =1$, it follows from ~\eqref{eqn:rel-et} that
$H^*_{\etl}(X|D, \mu_n(j)) \xrightarrow{\simeq} 
H^*_{\etl}(X|D_{\rm red}, \mu_n(j))$. 
Using \thmref{thm:Torsion-Bloch}, we therefore obtain the following result 
about the prime-to-$p$ torsion in the Chow group with modulus.

\begin{thm}\label{thm:nil-inv} 
Let $X$ be a smooth projective scheme of dimension $d \ge 1$ over $k$ 
let $D \subset X$ be an effective Cartier divisor.
Then, the restriction map ${_{n}{\CH_0(X|D)}} \to {_{n}{\CH_0(X|D_{\rm red})}}$ 
is an isomorphism for every integer $n$ prime to $p$.
\end{thm}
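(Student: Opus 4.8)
The plan is to deduce the statement from the isomorphism of \thmref{thm:Torsion-Bloch} together with the nil-invariance of $\mu_n$-{\'e}tale cohomology for $n$ prime to $p$. First I would pin down the map in question. Since $X \setminus D = X \setminus D_{\rm red}$ as sets, the cycle groups coincide, $\sZ_0(X|D) = \sZ_0(X|D_{\rm red})$, while for any finite $\varphi_C \colon C \to X$ the inequality $\varphi_C^*(D_{\rm red}) \le \varphi_C^*(D)$ of effective divisors gives $G(C, \varphi_C^*(D)) \subseteq G(C, \varphi_C^*(D_{\rm red}))$. Passing to cokernels yields the canonical surjection $\CH_0(X|D) \surj \CH_0(X|D_{\rm red})$, which is the restriction map under consideration, and hence a map ${_{n}\CH_0(X|D)} \to {_{n}\CH_0(X|D_{\rm red})}$ on torsion.

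On the cohomological side I would first check that the induced map $H^{2d-1}_{\etl}(X|D, \mu_n(d)) \to H^{2d-1}_{\etl}(X|D_{\rm red}, \mu_n(d))$ is an isomorphism for every $n$ prime to $p$. By the definition in \S~\ref{sec:Et-coh}, the relative group sits in the long exact sequence \eqref{eqn:rel-et} attached to ${\rm Cone}(\mu_n(d)|_X \to f_*(\mu_n(d)|_D))[-1]$, and the inclusion $D_{\rm red} \inj D$ induces a morphism of these long exact sequences. Since $D$ and $D_{\rm red}$ share the same underlying topological space and $\mu_n$-{\'e}tale cohomology is insensitive to nilpotents when $(n,p)=1$, the comparison $H^*_{\etl}(D, \mu_n(d)) \to H^*_{\etl}(D_{\rm red}, \mu_n(d))$ is an isomorphism, whence the five lemma gives the relative version. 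Taking the colimit over $\mu_{l^m} \hookrightarrow \mu_{l^{m+1}}$ upgrades this to an isomorphism $H^{2d-1}_{\etl}(X|D, {\Q_l}/{\Z_l}(d)) \xrightarrow{\simeq} H^{2d-1}_{\etl}(X|D_{\rm red}, {\Q_l}/{\Z_l}(d))$ for each prime $l \neq p$.

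I would then assemble the square
\[
\xymatrix@C1pc{
\CH_0(X|D)\{l\} \ar[r]^-{\lambda_{X|D}} \ar[d] & H^{2d-1}_{\etl}(X|D, {\Q_l}/{\Z_l}(d)) \ar[d]^-{\simeq} \\
\CH_0(X|D_{\rm red})\{l\} \ar[r]^-{\lambda_{X|D_{\rm red}}} & H^{2d-1}_{\etl}(X|D_{\rm red}, {\Q_l}/{\Z_l}(d)),}
\]
in which both horizontal arrows are the isomorphisms of \thmref{thm:Torsion-Bloch} and the right vertical arrow is the nil-invariance isomorphism just constructed. Once this square commutes, the left vertical arrow, namely the restriction on $l$-primary torsion, is an isomorphism for every $l \neq p$. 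For a general $n$ prime to $p$ I would then decompose $n = \prod_i l_i^{a_i}$ and use ${_{n}A} = \bigoplus_i {_{l_i^{a_i}}A}$ with ${_{l^a}A} \subseteq A\{l\}$: an isomorphism of $l$-primary subgroups restricts to an isomorphism on each fixed torsion order, so summing over the $l_i$ yields ${_{n}\CH_0(X|D)} \xrightarrow{\simeq} {_{n}\CH_0(X|D_{\rm red})}$.

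The main obstacle is the commutativity of the displayed square, that is, the naturality of $\lambda_{X|D}$ under the passage from $D$ to $D_{\rm red}$. This must be traced through the construction in \thmref{thm:Torsion-Bloch}: there $\lambda_{X|D}$ is extracted, via the split sequences \eqref{eqn:BK-Main-0} and \eqref{eqn:BK-Main-et}, from the map $\lambda_{S_X}$ on the double, which is itself determined by Bloch's map $\lambda_X$ on the fixed smooth scheme $X$ through the relations $\iota^*_\pm \circ \lambda_{S_X} = \lambda_X \circ \iota^*_\pm$. Because $X$, and hence $\lambda_X$, is unchanged when $D$ is replaced by $D_{\rm red}$, the required compatibility reduces to the functoriality of the double $S(X,-)$ and of the relative {\'e}tale cohomology under $D_{\rm red} \inj D$, which is exactly the naturality already used to build the nil-invariance isomorphism. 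I expect this bookkeeping, rather than any new geometric input, to be the one point that needs care.
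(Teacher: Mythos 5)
Your proposal is correct and follows essentially the same route as the paper: the paper's entire proof is the two-sentence remark preceding the theorem, namely that nil-invariance of $\mu_n$-cohomology for $(n,p)=1$ gives $H^*_{\etl}(X|D,\mu_n(j))\simeq H^*_{\etl}(X|D_{\rm red},\mu_n(j))$ via ~\eqref{eqn:rel-et}, combined with \thmref{thm:Torsion-Bloch}. You are in fact more careful than the paper, which silently passes over the naturality of $\lambda_{X|D}$ under $D_{\rm red}\inj D$ (needed to conclude that the \emph{restriction map}, and not merely some abstract comparison, is an isomorphism); your reduction of that point to the compatibility $\iota^*_\pm\circ\lambda_{S_X}=\lambda_X\circ\iota^*_\pm$ and the functoriality of the double is the right way to fill it in.
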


Our second application is the following extension of \thmref{thm:Cyc-iso} 
to the cohomology of relative $K_2$-sheaf on a smooth surface.
Recall that for a closed immersion $Y \inj X$ in $\Sch_k$, the
relative $K$-theory sheaf $\sK_{i, (X,Y)}$ is the Zariski sheaf on $X$
associated to the presheaf $U \mapsto K_i(U,Y\cap U)$.

\begin{thm}\label{thm:modulus-main-1}
Let $X$ be a smooth projective surface over an algebraically closed
field $k$ of exponential characteristic $p$ and let $l \neq p$ be a
prime. Let $D \subset X$ be an effective Cartier divisor. Then, the following 
hold.
\begin{enumerate}
\item
$H^1(X, \sK_{2, (X,D)})\otimes_{\Z} {\Q_l}/{\Z_l} = 0$.
\item
$H^2(X, \sK_{2, (X,D)})\{l\} \simeq H^3_{\etl}(X|D, {\Q_l}/{\Z_l}(2))$.
\end{enumerate}
\end{thm}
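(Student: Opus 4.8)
The plan is to reduce both assertions to the reduced projective surface $S_X = S(X,D)$, where they are governed by \thmref{thm:Cyc-iso}, in exactly the manner in which \thmref{thm:Torsion-Bloch} was reduced to the surface case. The essential point is that the relative $K$-cohomology $H^i(X,\sK_{2,(X,D)})$ appears as a canonical direct summand of $H^i(S_X,\sK_2)$, matching the splitting of the relative {\'e}tale cohomology recorded in ~\eqref{eqn:BK-Main-et}. Both splittings will be induced by the same pair of maps, namely the projection $\iota^*_-$ and its section $\Delta^*$.

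First I would establish, for $i = 1,2$, a split short exact sequence
\[
0 \to H^i(X, \sK_{2,(X,D)}) \to H^i(S_X, \sK_2) \xrightarrow{\iota^*_-} H^i(X, \sK_2) \to 0,
\]
in which $\Delta^*$ splits $\iota^*_-$ and the kernel summand is relative $K$-cohomology. This is the relative refinement of \thmref{thm:BK-Main}: the conductor square ~\eqref{eqn:rel-et-2}, which is both co-Cartesian and Cartesian by \lemref{lem:rel-et-2*}, carries the retraction $\Delta\colon S_X \to X$ with $\Delta \circ \iota_\pm = \id$, and the excision property of the double for relative $K$-theory yields a decomposition $\Delta_*\sK_{2,S_X} \cong \sK_{2,X} \oplus \sK_{2,(X,D)}$ of Zariski sheaves on $X$ (cf.\ \cite{BK}). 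Since $\Delta$ is finite, $H^i(S_X, \sK_2) = H^i(X, \Delta_*\sK_{2,S_X})$ splits accordingly; for $i = 2$ this is compatible, via Levine's isomorphism $H^2(-,\sK_2) \simeq \CH_0(-)$, with the cycle-theoretic decomposition $\CH_0(S_X) \simeq \CH_0(X) \oplus \CH_0(X|D)$ of \thmref{thm:BK-Main}. Identifying $\ker(\iota^*_-)$ with $H^i(X,\sK_{2,(X,D)})$ — that is, the excision property of the double in relative $K$-theory — is the main obstacle; once it is in hand the remainder is formal.

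For part (1), I would apply \thmref{thm:Cyc-iso} to the reduced projective surface $S_X$ to obtain $H^1(S_X, \sK_2) \otimes_{\Z} {\Q_l}/{\Z_l} = 0$. Tensoring the $i=1$ splitting above with ${\Q_l}/{\Z_l}$ and using additivity of $-\otimes_{\Z}{\Q_l}/{\Z_l}$, the relative summand $H^1(X, \sK_{2,(X,D)}) \otimes_{\Z} {\Q_l}/{\Z_l}$ vanishes as well, which is (1).

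For part (2), I would apply \thmref{thm:Cyc-iso} to $S_X$ to get the isomorphism $\tau_{S_X}\colon H^3_{\etl}(S_X, {\Q_l}/{\Z_l}(2)) \xrightarrow{\simeq} \CH_0(S_X)\{l\} = H^2(S_X, \sK_2)\{l\}$. Passing to the limit in ~\eqref{eqn:BK-Main-et} splits the source as $H^3_{\etl}(X, {\Q_l}/{\Z_l}(2)) \oplus H^3_{\etl}(X|D, {\Q_l}/{\Z_l}(2))$, while taking $l$-primary torsion in the $i=2$ splitting above splits the target as $H^2(X,\sK_2)\{l\} \oplus H^2(X, \sK_{2,(X,D)})\{l\}$, both splittings being induced by $\iota^*_-$ and $\Delta^*$. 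The commutativity of ~\eqref{eqn:dim-2-com} gives $\iota^*_- \circ \tau_{S_X} = \tau_X \circ \iota^*_-$, so $\tau_{S_X}$ carries $\ker(\iota^*_-)$ into $\ker(\iota^*_-)$ and induces $\tau_X$ on the quotients. Since $\tau_{S_X}$ is an isomorphism and $\tau_X$ is an isomorphism by \thmref{thm:Cyc-iso} for the smooth surface $X$, a diagram chase on the resulting map of split short exact sequences forces $\tau_{S_X}$ to restrict to an isomorphism $H^3_{\etl}(X|D, {\Q_l}/{\Z_l}(2)) \xrightarrow{\simeq} H^2(X, \sK_{2,(X,D)})\{l\}$ on the kernels, which is the assertion of (2).
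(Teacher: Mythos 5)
Your overall strategy --- reduce to the reduced projective surface $S_X$ and peel off a relative direct summand using $\Delta\circ\iota_-=\id_X$ --- is the right one, and for part (1) it is essentially what the paper does. But the step you yourself flag as ``the main obstacle'' is a genuine gap, and it does not resolve in the form you assert. The retraction $\Delta\circ\iota_-=\id_X$ gives, canonically, the split short exact sequences
\[
0\to H^i(S_X,\sK_{2,(S_X,X_-)})\to H^i(S_X,\sK_2)\xrightarrow{\iota^*_-} H^i(X,\sK_2)\to 0,
\]
where the kernel summand is the cohomology of the relative sheaf of the \emph{pair} $(S_X,X_-)$. Your proposal replaces this summand by $H^i(X,\sK_{2,(X,D)})$, i.e.\ asserts a sheaf-level decomposition $\Delta_*\sK_{2,S_X}\simeq\sK_{2,X}\oplus\sK_{2,(X,D)}$. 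That is precisely excision for relative $K_2$ of the conductor (Milnor) square of the double, and it is not available: the discrepancy between $\sK_{2,(S_X,X_-)}$ and $\sK_{2,(X_+,D)}$ is the birelative sheaf $\sK_{2,(S_X,X_-,D)}$, which is in general nonzero (compare the birelative terms appearing in \lemref{lem:WN-0}), and \cite{BK} does not prove such an excision. What the paper actually establishes, in the last paragraph of its proof, is only that $\iota^*_+:H^1(S_X,\sK_{2,(S_X,X_-)})\to H^1(X,\sK_{2,(X,D)})$ is \emph{surjective}, using the Geller--Weibel vanishing $\sK_{1,(S_X,X_-,D)}={\sI_D}/{\sI_D^2}\otimes\Omega^1_{D/X}=0$ and the fact that the birelative $\sK_2$-sheaf is supported on the curve $D$. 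A surjection suffices for part (1), since $-\otimes_{\Z}{\Q_l}/{\Z_l}$ is right exact and the source vanishes after tensoring (the paper extracts this from the diagram ~\eqref{eqn:rel-et-3} together with \thmref{thm:Torsion-Bloch}; your alternative of quoting \thmref{thm:Cyc-iso}(1) for $S_X$ and passing to the direct summand gives the same vanishing of $H^1(S_X,\sK_{2,(S_X,X_-)})\otimes_{\Z}{\Q_l}/{\Z_l}$). So part (1) can be repaired, but only after you downgrade your claimed isomorphism to this surjection and actually prove it.

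For part (2) your route again rests on the unproved identification of $\ker(\iota^*_-)$ in degree $2$ with $H^2(X,\sK_{2,(X,D)})$. The paper sidesteps this entirely: it invokes the Bloch formula with modulus $\CH_0(X|D)\xrightarrow{\simeq}H^2(X,\sK_{2,(X,D)})$ (a combination of \cite[Theorem~1.7]{BK} and \cite[Lemma~2.1]{Krishna-3}) and then applies \thmref{thm:Torsion-Bloch} to conclude $H^2(X,\sK_{2,(X,D)})\{l\}\simeq\CH_0(X|D)\{l\}\simeq H^3_{\etl}(X|D,{\Q_l}/{\Z_l}(2))$. If you prefer to run your splitting argument, you must supply the degree-$2$ comparison $H^2(S_X,\sK_{2,(S_X,X_-)})\simeq H^2(X,\sK_{2,(X,D)})$; this does hold, because the birelative sheaf is supported on the one-dimensional scheme $D$ so its second and third cohomology vanish, but it is an argument you have to make --- it is not a formal consequence of $\Delta\circ\iota_\pm=\id_X$.
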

\begin{proof}
It follows from \thmref{thm:Torsion-Bloch} that
$H^3_{\etl}(X|D, {\Q_l}/{\Z_l}(2)) \simeq \CH_0(X|D)\{l\}$. On the
other hand, a combination of \cite[Theorem~1.7]{BK} and 
\cite[Lemma~2.1]{Krishna-3} shows that there is a canonical isomorphism
$\CH_0(X|D) \xrightarrow{\simeq} H^2(X, \sK_{2, (X,D)})$.
This proves (2).

To prove (1), we consider the
following commutative diagram for any integer $n \in k^{\times}$. 

\begin{equation}\label{eqn:rel-et-3}
\xymatrix@C.8pc{
& 0 \ar[d] & 0 \ar[d] & 0 \ar[d] & \\
0 \ar[r] & H^1(S_X, \sK_{2, (S_X, X_-)})\otimes_{\Z} {\Z}/{n} 
\ar[r] \ar[d] & H^3_{\etl}(S_X|X_-, \mu_n(2)) \ar[r] \ar[d] &
{_{n}{\CH_0(X|D)}} \ar[r] \ar[d] & 0 \\
0 \ar[r] & H^1(S_X, \sK_{2, S_X})\otimes_{\Z} {\Z}/{n} \ar[r] \ar[d] &
H^3_{\etl}(S_X, \mu_n(2)) \ar[r] \ar[d] &
{_{n}{\CH_0(S_X)}} \ar[r] \ar[d] & 0 \\
0 \ar[r] & H^1(X, \sK_{2, X})\otimes_{\Z} {\Z}/{n} \ar[r] \ar[d] &
H^3_{\etl}(X, \mu_n(2)) \ar[r] \ar[d] &
{_{n}{\CH_0(X)}}  \ar[r] \ar[d] & 0 \\
& 0 & 0 & 0 &}
\end{equation}

The columns on the left and in the middle are exact by the splitting
$\Delta \circ \iota_- = {\rm Id}_X$ in ~\eqref{eqn:rel-et-2}. The column on the
right is exact by \thmref{thm:BK-Main}. 
The middle and the bottom rows are exact by \thmref{thm:K-2-coh-0}.
It follows that the top row is also exact.
Using the isomorphism $\iota^*_+: H^3_{\etl}(S_X|X_-, \mu_n(2))
\xrightarrow{\simeq} H^3_{\etl}(X|D, \mu_n(2))$ (see ~\eqref{eqn:rel-et-1})
and taking the direct limit of the terms in ~\eqref{eqn:rel-et-3} with 
respect to the direct system $\{{\Z}/{l^n}\}_n$,
we obtain a short exact sequence
\begin{equation}\label{eqn:rel-et-3**}
0 \to H^1(S_X, \sK_{2, (S_X, X_-)})\otimes_{\Z} {\Q_l}/{\Z_l} 
\to H^3_{\etl}(X|D, {\Q_l}/{\Z_l}(2)) \to 
\CH_0(X|D)\{l\} \to 0.
\end{equation}

Using ~\eqref{eqn:rel-et-3**} 
\thmref{thm:Torsion-Bloch}, we conclude that
$H^1(S_X, \sK_{2, (S_X, X_-)}) \otimes_{\Z} {\Q_l}/{\Z_l} = 0$ for
every prime $l \neq p$.
To finish the proof, it suffices now to show that the
pull-back map $\iota^*_+: H^1(S_X, \sK_{2, (S_X, X_-)}) \to
H^1(X, \sK_{2, (X, D)})$ is surjective.

Given an open subset $W \subset D$, let $U = S_X \setminus (D \setminus W)$
be the open subset of $S_X$. Let $\sK_{i, (S_X, X_-, D)}$ be the Zariski 
sheaf on $D$ associated to the presheaf $W \mapsto 
K_i(U, X_+ \cap U, X_- \cap U) =
{\rm hofib}((K(U, X_- \cap U) \xrightarrow{i^*_+} K(X_+ \cap U, D \cap U))$ 
(see \cite[Proposition~A.5]{PW}). There is an exact sequence of 
$K$-theory sheaves
\[
\nu_{*}(\sK_{2, (S_X, X_-, D )}) \to \sK_{2, (S_X, X_-)} \to
\sK_{2, (X_+, D)} \to \nu_{*}(\sK_{1, (S_X, X_-, D)}),
\]
where $\nu:D \inj S_X$ is the inclusion.
We have $\sK_{1, (S_X, X_-, D)} = {\sI_D}/{\sI^2_D} \otimes_D \Omega^1_{D/X}$
by \cite[Theorem~1.1]{GW} and the latter term is zero.
We thus get an exact sequence
\[
\nu_{*}(\sK_{2, (S_X, X_-, D)} ) \to  \sK_{2, (S_X, X_-)} \to
\sK_{2, (X_+, D)} \to 0.
\]
Since $H^2(S_X, \iota_{*}(\sK_{2, (S_X, X_-, D)}) = 
H^2(D, \sK_{2, (S_X, X_-, D)}) = 0$,
we get 
$H^1(X, \sK_{2, (S_X, X_-)}) \surj H^1(X, \sK_{2, (X_+,D)}) \simeq
H^1(X, \sK_{2, (X,D)})$.
\end{proof}

\section{Roitman's torsion theorem for 0-cycles with 
modulus}\label{sec:RTM}
In this section, we prove the Roitman torsion theorem 
(\thmref{thm:Intro-2}) for the Chow group of 0-cycles with modulus.
We shall deduce this result by proving a more general Roitman torsion theorem 
for singular varieties which are obtained by joining two smooth schemes
along a common reduced Cartier divisor. 
As before, we assume the base field $k$ to 
be algebraically closed of exponential characteristic $p$.

\enlargethispage{25pt}

\subsection{Join of two smooth schemes along a common divisor}
\label{sec:gluing}
Let $X_+$ and $X_-$ be two smooth connected quasi-projective schemes of 
dimension $d \ge 1$ over $k$ and let 
$X_+ \stackrel{i_+}{\hookleftarrow} D \stackrel{i_-}{\hookrightarrow} X_-$
be two closed embeddings such that $D$ is a reduced effective Cartier divisor
on each $X_\pm$ via these embeddings. Let $X$ be the quasi-projective scheme
over $k$ such that the square
\begin{equation}\label{eqn:Bertini-0}
\xymatrix@C1pc{
D \ar[r]^-{i_+} \ar[d]_{i_-} & X_+ \ar[d]^{\iota_+} \\
X_- \ar[r]_-{\iota_-} & X}
\end{equation}
is co-Cartesian in $\Sch_k$.

It is easy to check that all arrows in ~\eqref{eqn:Bertini-0} are closed 
immersions and $X$ is a reduced scheme with irreducible
components $\{X_+, X_-\}$ with $X_{\rm sing} = D$. In particular, the
canonical map $\pi = \iota_+ \amalg \iota_-: X_+ \amalg X_- \to X$ is the
normalization map. 
Let $U = X_{\rm reg} = (X_+ \setminus D) \amalg (X_- \setminus D)$.
We shall use the following important further properties of $X$.

\begin{lem}\label{lem:CM-join}
The scheme $X$ satisfies the following properties.
\begin{enumerate}
\item
It is Cohen-Macaulay.
\item
It is separably weakly normal.
\item
It is weakly normal.
\item
The map $D \to X_+ \times_X X_-$ is an isomorphism.
\end{enumerate}
\end{lem}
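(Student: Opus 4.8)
The plan is to reduce everything to commutative algebra by working affine-locally: over an affine open $\Spec(A)$ of $X$ meeting $D$, the co-Cartesian square \eqref{eqn:Bertini-0} becomes a Cartesian square of rings $A = A_+ \times_R A_-$, where $A_\pm$ are the (regular) coordinate rings of $X_\pm$ and $R = A_+/(f_+) = A_-/(f_-) = \sO_D$, with each $f_\pm$ a nonzerodivisor because $D$ is a Cartier divisor. Since $X_+ \amalg X_-$ is normal and $\pi$ is finite and birational, $\wt{A} := A_+ \times A_-$ is the normalization of $A$. I would first dispose of (4): it is exactly \lemref{lem:rel-et-2*} applied to this square, which gives $A_+ \otimes_A A_- \xrightarrow{\simeq} R$, i.e. $X_+ \times_X X_- \xrightarrow{\simeq} D$; the isomorphism is canonical and therefore glues.

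For (1), I would use the conductor sequence $0 \to A \to \wt{A} \to R \to 0$ of $A$-modules, where the right map is the difference of the two restrictions $\wt{A} = A_+ \times A_- \to R$. Here $\wt{A}$ is Cohen-Macaulay of dimension $d$ (as $A_\pm$ are regular and finite over $A$) and $R = \sO_D$ is Cohen-Macaulay of dimension $d-1$ (a Cartier divisor in a regular scheme). The depth lemma applied to this sequence gives $\operatorname{depth}_{\fm} A \ge \min(d, (d-1)+1) = d$ at every point $\fm \in D$, while $\dim A_{\fm} = d$; hence $A$ is Cohen-Macaulay. Away from $D$ the scheme $X$ is smooth, so there is nothing to check there.

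For (2) I would compute the conductor explicitly. A direct check shows $(A:\wt{A}) = (f_+) \times (f_-)$ as an ideal $I$ of $\wt{A}$ lying in $A$, so that $\wt{A}/I = R \times R$, which is reduced because $D$ is reduced. Moreover the surjection $A \to R$, $(a_+,a_-) \mapsto \ov{a}_+ = \ov{a}_-$, has kernel exactly $I$, giving $A/I \xrightarrow{\simeq} R$; under these identifications the induced map $A/I \to \wt{A}/I$ is the diagonal $R \to R \times R$. Since this is a product of two isomorphisms onto the two factors, it is generically {\'e}tale, in particular generically separable. By \defref{defn:swn}, $A$ is separably weakly normal, proving (2).

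Finally, (3) follows formally: by (1) the ring $A$ is Cohen-Macaulay, hence $S_2$, and by (2) it is separably weakly normal; the remark following \defref{defn:swn} (equivalently, the converse direction of \propref{prop:weakns}(2), using that every separable map is weakly separable in the sense of \defref{defn:Pins}) then shows $A$ is weakly normal. The only real content is the Cohen-Macaulay verification in (1) and the identification of the conductor square in (2); once the conductor $I = (f_+) \times (f_-)$ and the diagonal description of $A/I \to \wt{A}/I$ are in hand, separability is automatic, precisely because here we glue two components transversally along a \emph{reduced} divisor rather than along an inseparable identification as in Example~\ref{example:w-normal}.
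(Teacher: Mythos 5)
Your proof is correct and follows essentially the same route as the paper: (4) via Lemma~\ref{lem:rel-et-2*}, (2) by identifying the conductor square (conductor $=(f_+)\times(f_-)$, quotient $R\times R$ reduced, the induced map being the diagonal/collapse map, hence generically separable), and (3) from (1) and (2) via the converse direction of Proposition~\ref{prop:weakns}(2) using that Cohen--Macaulay implies $S_2$. The only difference is in (1), where the paper simply cites results of Ananthnarayan--Avramov--Moore while you give a self-contained argument via the conductor exact sequence $0\to A\to A_+\times A_-\to R\to 0$ and the depth lemma; that argument is valid and arguably more transparent.
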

\begin{proof}
Because $X_+$ and $X_-$ are smooth of dimension $d$, they are
Cohen-Macaulay. Since $D$ is an effective Cartier divisor on
a smooth scheme of dimension $d$, it is Cohen-Macaulay of dimension $d-1$.
The statement (1) now follows from 
\cite[Lemma~1.2, Lemma~1.5, (1.5.3)]{AAM}. 
Second statement follows because $D \subset X$ is the smallest conducting
closed subscheme for the normalization $\pi: X^N \to X$ which is reduced.
Furthermore, the map $D \times_X X^N \simeq D \amalg D \xrightarrow{\pi} D$ is 
just the collapse map and hence generically separable.
The statement (3) follows from the previous two and  (4)
follows from \lemref{lem:rel-et-2*}.
\end{proof}

\subsection{The main result}\label{sec:Main-RT}
We shall use the set-up of \S~\ref{sec:gluing} throughout this section. 
We assume from now on that $d = \dim(X) \ge 3$.
Let $C \inj X$ be a reduced Cartier curve (see \S~\ref{sec:Chow-sing})
such that each irreducible
component of $C \setminus D$ is smooth and $C \setminus D$ has only double point
singularities.

\begin{lem}\label{lem:Bertini}
There exists a locally closed embedding $X \inj \P^n_k$ such that for a 
general set of distinct hypersurfaces $H_1, \cdots , H_{d-2} \subset \P^n_k$
of large degree containing $C$, the intersection
$L = H_1 \cap \cdots \cap H_{d-2}$ satisfies the following.
\begin{enumerate}
\item
$X \cap L$ is reduced.
\item
$D \cap L$ is reduced.
\item
$X_\pm \cap L$ is an integral normal surface 
whose singular locus is contained in $C_{\rm sing} \cap D$.
\item
The inclusion $C \subset (X \cap L)$ is a local complete intersection along $D$.
\end{enumerate}
\end{lem}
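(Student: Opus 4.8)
The plan is to deduce all four assertions from the Bertini theorems for hypersurface sections containing a fixed subscheme, due to Altman--Kleiman \cite{KL}, applied not to the singular scheme $X$ directly but to its two smooth components through the normalization $\pi\colon X_+ \amalg X_- \to X$. First I would fix a locally closed embedding $X \inj \P^n_k$ and, for $m \gg 0$, consider the linear system $|\sI_C(m)|$ of degree-$m$ hypersurfaces containing $C$. Since $\sI_C(m)$ is globally generated for $m \gg 0$ by Serre vanishing, its base locus is exactly $C$, and using the conductor sequence $0 \to \sO_X \to \pi_*\sO_{X_+ \amalg X_-} \to \sO_D \to 0$ one checks that the restriction $H^0(\P^n_k, \sI_C(m)) \to H^0(X_\pm, \sI_{C_\pm}(m))$ is surjective, where $C_\pm$ denotes the restriction of $C$ to $X_\pm$. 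Thus a general $H \in |\sI_C(m)|$ restricts on each smooth component to a general member of the system $|\sI_{C_\pm}(m)|$, which is base-point-free off $C_\pm$ and is exactly the situation governed by \cite{KL}. The essential geometric input is that $C_\pm$ is a local complete intersection in the \emph{smooth} variety $X_\pm$: away from $D$ this holds because each component of $C \setminus D$ is smooth and $C \setminus D$ has only ordinary double points (both lci), while along $D$ it is the defining property of a good curve (Definition~\ref{defn:0-cycle-S-1}(3)), since $\iota\colon C \inj X$ is lci at every point over $X_{\rm sing} = D$. I already expect the delicate part to be the behaviour along $C \cap D$, where the base curve meets the non-normal locus of the join.

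For (1) and (2) I would run a reducedness argument. By Lemma~\ref{lem:CM-join}(1) the scheme $X$ is Cohen--Macaulay, and $D$ is Cohen--Macaulay as a reduced Cartier divisor in a smooth scheme; hence for a general complete intersection $L = H_1 \cap \cdots \cap H_{d-2}$ the schemes $X \cap L$ and $D \cap L$ are Cohen--Macaulay of dimensions $2$ and $1$. Since $\dim C = 1$, the generic points of $X \cap L$ and of $D \cap L$ all lie off the base locus $C$ (the intersections with $C$ are contained in the finite set $C \cap D$), where the reducedness form of \cite{KL} applies; combined with the $S_1$ property coming from Cohen--Macaulayness, this forces both $X \cap L$ and $D \cap L$ to be reduced.

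For (3) I would argue one smooth component at a time. The smoothness theorem of \cite{KL} gives that $X_\pm \cap L$ is smooth off $C_\pm$, and a short local Jacobian computation shows it is also smooth along the smooth locus of $C_\pm$ (the differentials of the general defining equations span the required conormal directions). At an ordinary double point of $C_\pm$ lying off $D$, modelled locally by $\sI_{C_\pm} = (z_1,\dots,z_{d-2},xy)$ in $X_\pm = \A^d$, the general members $h_j = \sum_i a_{ji}z_i + b_j\,xy$ have $dh_j = \sum_i a_{ji}(0)\,dz_i$ at the node, and for general coefficients the matrix $(a_{ji}(0))$ is invertible, so $X_\pm \cap L$ is smooth there as well. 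Hence $\mathrm{Sing}(X_\pm \cap L) \subseteq C_{\rm sing} \cap D$, a finite set. Irreducibility follows by passing to $\mathrm{Bl}_{C_\pm} X_\pm$, on which the strict transform of $|\sI_{C_\pm}(m)|$ is very ample for $m \gg 0$, so Bertini irreducibility applies as we cut down from $\dim X_\pm = d \ge 3$ to dimension $2$; together with reducedness from (1) this gives integrality. Being a complete intersection in a smooth scheme, $X_\pm \cap L$ is Cohen--Macaulay, hence $S_2$, and a finite singular locus gives $R_1$, so it is normal by Serre's criterion.

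Finally, for (4) I would work locally at a point $p \in C \cap D$, where $C \inj X$ is lci of codimension $d-1$, cut out in $X$ by a regular sequence $g_1,\dots,g_{d-1}$. Since $\sI_C(m)$ separates first-order jets transversally to $X$ for $m \gg 0$, I would choose the general $H_i$ so that $H_1,\dots,H_{d-2}$ restrict to $g_1,\dots,g_{d-2}$ modulo $\sI_C^2$; then $X \cap L$ is defined near $p$ by this regular subsequence, and $C$ is cut out inside the Cohen--Macaulay surface $X \cap L$ by the single remaining equation $g_{d-1}$, exhibiting it as a Cartier, hence lci, subscheme along $D$. The main obstacle throughout is precisely this local analysis along $C \cap D$: because $X$ is singular there one cannot invoke smooth-ambient Bertini for $X$ itself, and the conclusions must be transferred from the two smooth components across the gluing along $D$. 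The technical heart is to verify that a single system $|\sI_C(m)|$ is simultaneously rich enough to realize the jet conditions needed for (4) while its general member still satisfies the smoothness, reducedness and irreducibility conclusions on both components compatibly with the identification along $D$.
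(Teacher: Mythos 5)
Your overall strategy coincides with the paper's: both arguments rest on the Altman--Kleiman Bertini theorems for hypersurfaces containing $C$, use Cohen--Macaulayness of $X\cap L$, $D\cap L$ and $X_\pm\cap L$ to promote reducedness at generic points to reducedness, and deduce normality of $X_\pm\cap L$ from Serre's criterion once the singular locus is known to be finite. The paper packages the ``reduced/integral away from $C$'' statements and the lci condition (4) into citations of \cite[Lemmas~1.3, 1.4]{Levine-2} and \cite[Sublemma~1]{BS-1}, and obtains smoothness along $C$ by a single application of \cite[Theorem~7]{KL} to $W=C\setminus(C_{\rm sing}\cap D)$ via the bound ${\underset{e}{\rm max}}\{\dim(W(\Omega^1_C,e))+e\}\le 2$; you instead redo these steps by hand on the two smooth components. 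Your treatments of (1), (2) and (4) are essentially sound, modulo the caveat that one cannot force general sections to agree with a prescribed $g_1,\dots,g_{d-2}$ modulo $\sI_C^2$: what one actually gets, and what suffices, is that at each of the finitely many points $p\in C\cap D$ the images of general $H_1,\dots,H_{d-2}$ in $\sI_{C,p}/\fm_p\sI_{C,p}\cong k^{d-1}$ are linearly independent, so that a single further element generates $\sI_{C,p}$ over them and cuts $C$ out of the Cohen--Macaulay surface $X\cap L$.

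The genuine gap is in your proof of (3). You assert ${\rm Sing}(X_\pm\cap L)\subseteq C_{\rm sing}\cap D$ after checking smoothness (i) off $C_\pm$, (ii) along the smooth locus of $C_\pm$, and (iii) at nodes of $C_\pm$ off $D$; this case division omits the points of $(C\cap D)\setminus C_{\rm sing}$, which is exactly the locus you yourself flag as delicate. At such a point $p$ the unique branch of $C$ lies on one component, say $X_+$, and then $C_-=C\times_X X_-$ is, near $p$, the zero-dimensional scheme $C\cap D$ (use \lemref{lem:CM-join}(4)): it is neither a one-dimensional smooth locus of a curve nor a node, so neither of your local models says anything about $X_-\cap L$ at $p$, even though $p$ is a base point of the restricted linear system on $X_-$. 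The conclusion is still true --- $C\cap D$ is a Cartier divisor on the curve $C$, which is smooth at $p$, hence a curvilinear scheme, so for $m\gg 0$ the differentials at $p$ of sections of $\sI_C(m)$ span a hyperplane of $\fm_p/\fm_p^2$ in $X_-$ and $d-2$ general ones remain independent --- but this argument is missing from your write-up, and without it the finiteness of the singular locus, and hence normality via Serre, is not established. This is precisely the case that the paper's uniform appeal to \cite[Theorem~7]{KL} over all of $W=C\setminus(C_{\rm sing}\cap D)$, rather than a pointwise case analysis, is designed to cover.
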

\begin{proof}
Since $X$ is reduced of dimension $d \ge 3$, 
and since $C \subset X$ is a local complete intersection along 
$D = X_{\rm sing}$, it follows from \cite[Lemmas~1.3, 1.4]{Levine-2} (see also
\cite[Sublemma~1]{BS-1}) that for all $m \gg 1$, there is
an open dense subset $\sU_n(C, m)$ of the scheme $\sH_n(C, m)$ 
of hypersurfaces in $\P^n_k$ of degree $m$ containing $C$ such that 
the following hold.
\begin{listabc}
\item
For general distinct $H_1, \cdots , H_{d-2} \in \sU_n(C, m)$,
the scheme-theoretic intersection $L = H_1 \cap \cdots \cap H_{d-2}$
has the property that $X \cap L$ is reduced away from 
\nolinebreak
$C$.
\item
$D \cap L$ is reduced away from $C$.
\item
$X_\pm \cap L$ is integral away from $C$.
\item
$C \subset (X \cap L)$ is a local complete intersection along $D$.
\end{listabc}

Note that as $X_\pm$ is smooth and $X_\pm \cap L$ is a complete
intersection in $X_\pm$, it follows that $X_\pm \cap L$ is a 
Cohen-Macaulay surface. In particular, it has no embedded component.
Since $C$ is nowhere dense in $X_\pm \cap L$, it follows from (c) that 
$X_\pm \cap L$ must be irreducible. Since $C$ does not contain the
generic point of $X_\pm \cap L$, it follows from (b) and 
\lemref{lem:red-finite} that $X_\pm \cap L$ is integral.

Setting $W:= C\setminus (C_{\rm sing} \cap D)$ and following \cite[\S~5]{KL}, 
let ${W}(\Omega^1_{C}, e)$ denote the locally closed subset of points in $W$ 
where the embedding dimension of $W$ is $e$. It follows from our
assumption on the singularities of $C \setminus D$ that 
${\underset{e}{\rm max}} \{\dim({W}(\Omega^1_{C}, e)) + e\} \le 2$.
We conclude from \cite[Theorem~7]{KL} that for all $m \gg 1$, there is an 
open dense subset $\sW_n(W, m)$ of the scheme $\sH_n(W, m)$
of hypersurfaces of $\P^n_k$ of degree $m$ containing $W$ such that  
for general distinct $H_1, \cdots , H_{d-2} \in \sW_n(W, m)$,
the scheme-theoretic intersection $L = H_1 \cap \cdots \cap H_{d-2}$
has the following properties.
\begin{listabcprime}
\item
$X \cap L$ is a complete intersection in $X$ of dimension two.
\item
$(X \cap L) \setminus D$ is smooth.
\item
$X_\pm \cap L$ is smooth away from $C_{\rm sing} \cap D$.
\end{listabcprime}

It follows from (c') that $X_\pm \cap L$ is a Cohen-Macaulay surface
whose singular locus is contained in $C_{\rm sing} \cap D$. It follows
from Serre's normality condition that $X_\pm \cap L$ is normal.
Since $C$ is the closure of $W$ in $X$, we must have 
$\sH_n(W, m) = \sH_n(C, m)$ and in particular, $D \subset X \cap L$.
Combining (a) - (d) and (a') - (c') together, we conclude that there is a 
closed immersion $Y \subset X$ with the following properties.
\begin{enumerate}
\item
$\dim(Y) = 2$.
\item
The inclusions $Y \subset X$, $(X_\pm \cap Y) \subset X_\pm$ and
$(D \cap Y) \subset D$ are all complete intersections.
\item
$Y$ is reduced away from $C \cap D$.
\item
$(D \cap Y) \subset Y$ is a Cartier divisor which is reduced away from $C$.
\item
$X_\pm \cap Y$ is an integral normal surface which is smooth away from
$C_{\rm sing} \cap D$. 
\item
$C \subset Y$ is a local complete intersection along $D$.
\item
$Y \setminus D$ is smooth.
\end{enumerate}

If $Y \subset X$ is as above, then (6) says that the local rings
of $Y$ at $C \cap D$ contain regular elements. 
In particular, $C \cap D$ can contain no embedded point of $Y$. We
conclude from (3) and \lemref{lem:red-finite} that a surface $Y$
as above must be reduced.
Since $D \subset X_\pm$ is a Cartier divisor, it is Cohen-Macaulay
and hence (2) shows that $D \cap Y$ is a complete intersection 
Cohen-Macaulay curve inside $D$. Since $C \cap D$ can contain no 
generic point of $D \cap Y$, it follows form (4) and \lemref{lem:red-finite}
that $D \cap Y$ is a reduced curve.
This finishes the proof of the lemma.
\end{proof}

The following is a straightforward application of the
prime avoidance theorem in commutative algebra and its proof
is left to the reader.

\begin{lem}\label{lem:red-finite} 
Let $R$ be commutative Noetherian ring such that $R_{\fp}$ is reduced 
for every associated prime ideal $\fp$ of $R$.
Then $R$ must be reduced.
\end{lem}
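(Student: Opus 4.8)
The plan is to show the nilradical of $R$ vanishes by embedding $R$ into a product of its localizations at associated primes, each of which is reduced by hypothesis. Write $\operatorname{Ass}(R)$ for the set of associated primes of $R$, which is finite because $R$ is Noetherian, and consider the canonical homomorphism
\[
\phi\colon R \to \prod_{\fp \in \operatorname{Ass}(R)} R_{\fp}.
\]
Since each $R_{\fp}$ is reduced, so is the product, and a subring of a reduced ring is reduced; hence it suffices to prove that $\phi$ is injective.

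To establish injectivity, I would take $a \in \ker(\phi)$, so that $a$ maps to zero in $R_{\fp}$ for every $\fp \in \operatorname{Ass}(R)$. For each such $\fp$ this means there is some $s_{\fp} \notin \fp$ with $s_{\fp}\, a = 0$, i.e.\ $\operatorname{Ann}(a) \not\subseteq \fp$. At this point I would invoke the prime avoidance lemma: since $\operatorname{Ann}(a)$ is an ideal not contained in any of the finitely many primes $\fp \in \operatorname{Ass}(R)$, it is not contained in their union $\bigcup_{\fp \in \operatorname{Ass}(R)} \fp$. For a Noetherian ring this union is precisely the set of zerodivisors of $R$, so there exists a non-zerodivisor $s \in \operatorname{Ann}(a)$. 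From $s\,a = 0$ with $s$ a non-zerodivisor we conclude $a = 0$, whence $\ker(\phi) = 0$ and $R$ is reduced.

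The only points requiring care, and where the appeal to prime avoidance is essential, are the finiteness of $\operatorname{Ass}(R)$ (which legitimizes prime avoidance) and the identification of $\bigcup_{\fp \in \operatorname{Ass}(R)}\fp$ with the set of zerodivisors of $R$; both are standard facts for Noetherian rings. No step is genuinely hard, so there is no real obstacle beyond assembling these two facts. As an alternative route one could argue directly with the nilradical $N$: if $N$ were nonzero then $\operatorname{Ass}(N) \subseteq \operatorname{Ass}(R)$ would be nonempty, producing some $\fq \in \operatorname{Ass}(R)$ with $N_{\fq} = \operatorname{nil}(R_{\fq}) \neq 0$, contradicting the reducedness of $R_{\fq}$; this avoids prime avoidance at the cost of invoking properties of associated primes of submodules.
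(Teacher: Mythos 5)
Your proof is correct, and it follows exactly the route the paper intends: the paper leaves the proof to the reader after describing it as ``a straightforward application of the prime avoidance theorem,'' which is precisely the key step in your argument (combined with the standard facts that $\operatorname{Ass}(R)$ is finite and that $\bigcup_{\fp \in \operatorname{Ass}(R)} \fp$ is the set of zerodivisors of a Noetherian ring). Nothing further is needed.
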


\begin{lem}\label{lem:Bertini-Roitman}
Let $X = X_+ \amalg_D X_-$ be as in ~\eqref{eqn:Bertini-0} and let 
$Y = X \cap L$ be the complete intersection surface as obtained in
\lemref{lem:Bertini}. Assume that $X$ is projective. Then, $A^2(Y)$ 
is a semi-abelian variety and the Roitman torsion theorem holds for $Y$. 
That is, the Abel-Jacobi map $\rho_Y: \CH^{LW}_0(Y)_{\deg 0} \to A^2(Y)$ 
is an isomorphism on the torsion subgroups.
\end{lem}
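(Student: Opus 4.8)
The plan is to deduce the lemma from \thmref{thm:Fin-dim-main} by showing that the complete intersection surface $Y = X \cap L$ is a reduced projective separably weakly normal surface. By \lemref{lem:Bertini}, $Y$ is reduced of dimension two with irreducible components $Y_\pm := X_\pm \cap L$, each an integral normal projective surface, and the two components meet along the reduced curve $F := D \cap L = D \cap Y$, which is a Cartier divisor on $Y$. Since $X = X_+ \amalg_D X_-$ is the pushout of \eqref{eqn:Bertini-0} and $Y$ is cut out of $X$ by the complete intersection $L$ containing $C$, the co-Cartesian square defining $X$ restricts to a co-Cartesian square $F \to Y_\pm \to Y$ exhibiting $Y$ as the join $Y = Y_+ \amalg_F Y_-$; this is the mechanism already used in \S~\ref{sec:Lef} and in \lemref{lem:CM-join}.

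First I would record that the normalization of $Y$ is $\pi \colon Y^N = Y_+ \amalg Y_- \to Y$, since each $Y_\pm$ is already normal and $\pi$ separates the two branches. As $Y_\pm$ is integral and $F$ is a proper closed subscheme, $F$ meets no generic point of $Y_\pm$, so $\mathcal{I}_{F/Y_\pm}$ is generated by a non-zero-divisor and $F$ is an effective Cartier divisor on each $Y_\pm$. Applying \lemref{lem:rel-et-2*} to the local rings, exactly as in the proof of \lemref{lem:CM-join}(4), shows the square is Cartesian, so that $\mathcal{O}_Y = \mathcal{O}_{Y_+} \times_{\mathcal{O}_F} \mathcal{O}_{Y_-}$ with ${\rm Ker}(\mathcal{O}_{Y_\pm} \to \mathcal{O}_F) = \mathcal{I}_{F/Y_\pm}$ principal.

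Next I would compute the conductor $I := (\mathcal{O}_Y : \pi_*\mathcal{O}_{Y^N})$ directly from this Cartesian description. For $(g_+, g_-) \in \pi_*\mathcal{O}_{Y^N}$ the condition $(g_+, g_-) \cdot \pi_*\mathcal{O}_{Y^N} \subseteq \mathcal{O}_Y$ forces, upon pairing against $(1,0)$ and $(0,1)$, that $g_\pm|_F = 0$; hence $I = \mathcal{I}_{F/Y_+} \times \mathcal{I}_{F/Y_-}$. Consequently $\pi_*\mathcal{O}_{Y^N}/I \simeq \mathcal{O}_{F \amalg F}$, which is reduced because $F$ is reduced, while $\mathcal{O}_Y/I \simeq \mathcal{O}_F$ sits inside it diagonally, so the induced map $\mathcal{O}_Y/I \to \pi_*\mathcal{O}_{Y^N}/I$ is the fold map $\mathcal{O}_F \to \mathcal{O}_F \times \mathcal{O}_F$. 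This map is the identity on each factor, hence generically separable, and by \defref{defn:swn} the surface $Y$ is separably weakly normal.

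With separable weak normality established, the conclusion is immediate: $Y$ is a reduced projective separably weakly normal surface over the algebraically closed field $k$, so \thmref{thm:Fin-dim-main} applies and gives that $A^2(Y)$ is a semi-abelian variety and that $\rho_Y \colon \CH_0(Y)_{\deg 0} \to A^2(Y)$ is an isomorphism on torsion subgroups; since $\CH^{LW}_0(Y) = \CH_0(Y)$ by \cite[Theorem~3.17]{BK}, this is exactly the asserted statement for $\CH^{LW}_0(Y)_{\deg 0}$. The point requiring the most care, and the expected obstacle, is that the normal components $Y_\pm$ may acquire isolated surface singularities lying on $F$ (contained in $C_{\rm sing} \cap D$); the argument goes through because these singularities are internal to the normal branches and affect neither the reducedness of the gluing locus $F$ nor the separability of the fold map, so the conductor computation is unaffected.
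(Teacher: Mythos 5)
Your proposal is correct and follows essentially the same route as the paper: both proofs identify $Y$ with the join $Y_+\amalg_F Y_-$ of the two normal components along the reduced curve $F=D\cap Y$, verify via the conductor that $Y$ is separably weakly normal in the sense of Definition~\ref{defn:swn}, and then invoke Theorem~\ref{thm:Fin-dim-main}. Your explicit computation of the conductor $I=\sI_{F/Y_+}\times\sI_{F/Y_-}$ and of the diagonal map $\sO_Y/I\to\sO_F\times\sO_F$ just spells out what the paper asserts more briefly.
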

\begin{proof}
We let $Y_\pm = X_\pm \cap Y$ and $E = D \cap Y$.
Let $\iota'_\pm: Y_\pm \inj Y$ be the inclusion maps.
It follows from the construction of $Y \subset X$ and an easy variant of
\cite[Lemma~2.2]{BK} that the canonical map $Y_+ \amalg_E Y_- \to Y$
is an isomorphism. It also follows from \lemref{lem:Bertini} that
$\iota'_+ \amalg \iota'_-: Y_+ \amalg Y_- \to Y$ is the normalization map. 
Since $Y$ is the join of normal surfaces $Y_+$ and $Y_-$ 
along the common closed subscheme $E$, it follows that the non-normal
locus of $Y$ is the support of $E$. Let us denote the map 
$\iota'_+ \amalg \iota'_-$ by $\pi'$.

Let $\sI_\pm \subset \sO_{Y_\pm}$ denote the defining ideal sheaf for the
inclusion $E \subset Y_\pm$. It is then immediate from ~\eqref{eqn:Bertini-0} 
that $\pi'^*(\sI_E) = \sI_+ \times \sI_-$ which is actually in $\sO_Y$
under the inclusion $\pi'^*: \sO_Y \inj \pi'_*(\sO_{Y_+ \amalg Y_-})$.
In other words, $E \inj Y$ is a conducting subscheme for $\pi'$.
Since $\pi'^*(E) = E \amalg E$ is reduced by \lemref{lem:Bertini}
and since the support of $E$ is the non-normal locus of $Y$, we
see that $\pi'$ is the normalization map for which $E$ is the smallest 
conducting subscheme and is reduced. 
Since $\pi': E \amalg E \to E$ is just the collapse map,
it is clearly generically separable. We conclude 
(see Definition~\ref{defn:swn}) that $Y$ is a reduced projective surface
which is separably weakly normal. The lemma now follows
from \thmref{thm:Fin-dim-main}.
\end{proof}

\begin{thm}\label{thm:RT-join}
Let $X_+$ and $X_-$ be two smooth projective schemes of dimension $d \ge 1$
over $k$ and let $X = X_+ \amalg_D X_-$ be as in ~\eqref{eqn:Bertini-0}.
Then, the Albanese variety $A^d(X)$ is a semi-abelian variety
and the Abel-Jacobi map $\rho_{X}: \CH^{LW}_0(X)_{\deg 0} \to A^d(X)$ is
isomorphism on the torsion subgroups.
\end{thm}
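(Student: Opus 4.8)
The plan is to reduce the statement, in any characteristic, to the injectivity of the semi-abelian Abel--Jacobi map $\rho^{\rm semi}_X \colon \CH^{LW}_0(X)_{\deg 0} \to J^d(X)$ on torsion, and then to settle that injectivity by cutting $X$ down to the separably weakly normal surfaces produced in \lemref{lem:Bertini} and \lemref{lem:Bertini-Roitman}. First I would choose, exactly as in the proof of \thmref{thm:Fin-dim-main} (via \cite[(7.1), p.~657]{ESV}), a finite family of reduced complete intersection Cartier curves $\{C_1,\dots,C_r\}$ on $X$ for which $\prod_{i=1}^r \Pic^0(C_i)\to A^d(X)$ is surjective. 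In characteristic $0$ these curves may be taken weakly normal (hence seminormal), so each $\Pic^0(C_i)$ is semi-abelian and its quotient $A^d(X)$ is semi-abelian. In characteristic $p$ the surjections $\prod_i\Pic^0(C_i)\surj A^d(X)\surj J^d(X)$ together with \cite[Lemma~2.7]{Krishna-1} show that $\CH^{LW}_0(X)_{\deg 0,\,\mathrm{tors}}\to A^d(X)_{\mathrm{tors}}\to J^d(X)_{\mathrm{tors}}$ are both surjective; since $A^d_{\rm unip}(X)$ is a $p$-primary torsion group of bounded exponent, once $\rho^{\rm semi}_X$ is injective on torsion the composite $\CH^{LW}_0(X)_{\deg 0,\,\mathrm{tors}}\to J^d(X)_{\mathrm{tors}}$ is bijective, which forces $A^d(X)_{\mathrm{tors}}\to J^d(X)_{\mathrm{tors}}$ to be bijective, hence $A^d_{\rm unip}(X)_{\mathrm{tors}}=0$ and therefore $A^d(X)=J^d(X)$. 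Thus both assertions follow from the single claim that $\rho^{\rm semi}_X$ is injective on torsion.

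To prove that injectivity I would argue one prime at a time. The case $d=1$ (a seminormal curve, where $\rho_X$ is the identity on $\Pic^0(X)$) and the case $d=2$ (where $X$ is a separably weakly normal surface by \lemref{lem:CM-join}, so \thmref{thm:Fin-dim-main} applies verbatim) are the base cases. So assume $d\ge 3$ and fix a prime $l$, the interesting case being $l=p$. Given $\alpha\in\CH^{LW}_0(X)_{\deg 0}\{l\}$ with $\rho^{\rm semi}_X(\alpha)=0$, I would first represent $\alpha$ as the image under $\Pic^0(C)\to\CH^{LW}_0(X)_{\deg 0}$ of a single reduced Cartier curve $C\subset X$ whose components are smooth off $D$ and which has only double points off $D$, so that \lemref{lem:Bertini} applies to $C$. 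Running \lemref{lem:Bertini} yields a complete intersection surface $Y=X\cap L\supset C$ which, by \lemref{lem:Bertini-Roitman}, is separably weakly normal with normalization $Y_+\amalg Y_-$ and $Y_\pm=X_\pm\cap L$ integral normal surfaces. The factorization $\Pic^0(C)\to\CH^{LW}_0(Y)\to\CH^{LW}_0(X)$ through the push-forward $(\iota_Y)_*$ of the l.c.i.\ inclusion $\iota_Y\colon Y\inj X$ (which preserves singular loci, $Y_{\rm sing}=E=Y\cap X_{\rm sing}$, so that $(\iota_Y)_*$ exists by \cite[Proposition~3.17]{BK}) lifts $\alpha$ to $\beta\in\CH^{LW}_0(Y)_{\deg 0}\{l\}$ with $(\iota_Y)_*\beta=\alpha$.

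Next I would invoke the compatibility $(\iota_Y)_*\circ\rho^{\rm semi}_Y=\rho^{\rm semi}_X\circ(\iota_Y)_*$, which holds by the universal Serre construction of the semi-abelian quotients exactly as in the proof of \propref{prop:PF-alb-sing} (applied to the join rather than the double). This gives $(\iota_Y)_*(\rho^{\rm semi}_Y(\beta))=\rho^{\rm semi}_X(\alpha)=0$ in $J^d(X)$, and the crux is that $(\iota_Y)_*\colon J^2(Y)\{l\}\to J^d(X)\{l\}$ is injective. For $l\ne p$ this is the join analogue of \propref{prop:PF-alb-sing}. For $l=p$ it follows from the abelian part alone: over $k$ algebraically closed of characteristic $p$ a torus has no $p$-torsion and is $p$-divisible, so the abelian quotient induces $J^2(Y)\{p\}\xrightarrow{\simeq}\Alb(Y_+)\{p\}\oplus\Alb(Y_-)\{p\}$ and likewise for $J^d(X)$, while the Lefschetz map $\Alb(Y_\pm)\to\Alb(X_\pm)$ is an isomorphism because $Y_\pm$ is a general complete intersection surface in the smooth variety $X_\pm$ of dimension $\ge 3$. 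Hence $\rho^{\rm semi}_Y(\beta)=0$; since $Y$ is separably weakly normal, \lemref{lem:Bertini-Roitman} makes $\rho^{\rm semi}_Y=\rho_Y$ injective on torsion, so $\beta=0$ and thus $\alpha=(\iota_Y)_*\beta=0$.

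The hard part will be precisely the last injectivity at $l=p$: the passage from $J^2(Y)$ to $J^d(X)$ on $p$-torsion rests on the Lefschetz isomorphism $\Alb(Y_\pm)\xrightarrow{\simeq}\Alb(X_\pm)$ for the sections $Y_\pm$, which \lemref{lem:Bertini} only guarantees to be \emph{normal} (smooth away from the finitely many points of $C_{\rm sing}\cap D$) rather than smooth. Proving this weak Lefschetz theorem for the Albanese of such singular complete intersection surfaces, and verifying that $(\iota_Y)_*$ on the semi-abelian quotients genuinely realizes this covariant map on abelian parts, is the technical heart of the argument and the reason the genericity conditions of \lemref{lem:Bertini} were arranged so carefully.
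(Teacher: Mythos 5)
Your overall architecture matches the paper's: reduce both assertions to injectivity of $\rho^{\rm semi}_X$ on torsion, settle $d\le 2$ via \lemref{lem:CM-join} and \thmref{thm:Fin-dim-main}, and for $d\ge 3$ cut down to the separably weakly normal surface of \lemref{lem:Bertini}--\lemref{lem:Bertini-Roitman}, transferring the vanishing of $\rho^{\rm semi}$ through the abelian quotients $J^2(Y_\pm)\to J^d(X_\pm)$ (where the Lefschetz isomorphism for Albanese varieties of iterated general hypersurface sections of \emph{normal} projective varieties, which you flag as the ``technical heart,'' is in fact classical and is what the paper cites from Lang). The final diagram chase you propose is essentially the paper's diagram ~\eqref{eqn::RT-join-0}.

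However, there is a genuine gap at the very first step of your $d\ge 3$ argument: you assert that a torsion class $\alpha$ with $\rho^{\rm semi}_X(\alpha)=0$ can be ``represented as the image under $\Pic^0(C)\to\CH^{LW}_0(X)_{\deg 0}$ of a single reduced Cartier curve $C\subset X$ whose components are smooth off $D$ and which has only double points off $D$, so that \lemref{lem:Bertini} applies.'' This is not available on $X$ itself. The moving lemma (\cite[Lemma~2.1]{BS-1}) produces a reduced Cartier curve $C$ and a function $f$ with $n\alpha=\divf_C(f)$, but gives no control on the singularities of $C$ inside $X_{\rm reg}$, and the relation $n\alpha=\divf_C(f)$ is tied to that particular curve --- you cannot simply replace $C$ by a nicer one. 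The paper's remedy occupies a substantial part of its proof: one blows up $X$ at finitely many smooth points to obtain $\phi:X'\to X$ with $X'=X'_+\amalg_D X'_-$ on which the strict transform $C'$ has smooth components and ordinary double points off $D$; one then lifts $\alpha$ to $\alpha'$ on $C'$, observes that $n\alpha'-\divf_{C'}(f)$ is a degree-zero $0$-cycle $\beta$ supported on the exceptional divisor, writes $\beta=\sum_j\divf_{L_j}(g_j)$ on smooth rational curves $L_j$ in the exceptional locus, and enlarges $C'$ to the Cartier curve $C''=C'\cup(\cup_j L_j)$ before applying \lemref{lem:Bertini}. Finally, the birational invariance of $J^d$ (via \cite[Lemma~2.5]{ESV}) is needed to descend the conclusion from $X'$ back to $X$. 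Without this blow-up and the bookkeeping of the exceptional cycle $\beta$, your lift of $\alpha$ to a torsion class $\beta\in\CH^{LW}_0(Y)_{\deg 0}$ on a surface $Y$ satisfying the hypotheses of \lemref{lem:Bertini-Roitman} is not justified, and the rest of the argument has nothing to act on.
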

\begin{proof}
It follows from \lemref{lem:CM-join} that
$X$ is separably weakly normal and weakly normal.  Hence, in characteristic 
zero, a general reduced Cartier curve on $X$ containing a
0-cycle is weakly normal by \cite[Corollary~2.5]{CGM}.
We can thus repeat the argument of the proof of \thmref{thm:Fin-dim-main},
which shows that $\rho_X$ is surjective and reduces the remaining proof to 
showing only that the restriction of $\rho_{X}$ to the
$p$-primary torsion subgroup of $\CH^{LW}_0(X)_{\deg 0}$ 
is injective when $p \ge 2$.

When $d=1$, the scheme $X$ is a weakly normal curve, and it is well known in 
this case that $\Pic^0(X) \simeq A^1(X) \simeq J^1(X)$.
The case $d = 2$ is \lemref{lem:Bertini-Roitman}.
So we assume $d \ge 3$.

Let $\alpha \in \sZ_0(X)$ be such that $n\alpha = 0$ in
$\CH^{LW}_0(X)$ for some integer $n = p^m$. Equivalently,
$n\alpha \in \sR^{LW}_0(X)$. Let us assume further 
that $\rho^{\rm semi}_X(\alpha) = 0$.

We can now use \cite[Lemma~2.1]{BS-1} to find a reduced Cartier curve
$C$ on $X$ and a function $f \in \sO^{\times}_{C, S}$ such that $n\alpha = 
\divf(f)$, where $S = C \cap X_{\rm sing} = C \cap D$.
Since part of $n\alpha$ supported on any connected component of
$C$ is also of the form
$n\alpha' = \divf_{C'}(f')$ for some Cartier curve $C'$ and some $f'$, 
we can assume that $C$ is connected. Let $\{C_1, \cdots , C_r\}$ denote
the set of irreducible components of $C$.

We set $U = X \setminus D = (X_+ \setminus D) \amalg (X_- \setminus D)$.
Let $\phi:X' \to X$ be a successive blow-up at smooth points such that the
following hold.
\begin{enumerate}
\item
The strict transform $D_i$ of each $C_i$ is smooth along $\phi^{-1}(U)$.
\item
$D_i \cap D_j \cap \phi^{-1}(U) = \emptyset$ for $i \neq j$.
\item
Each $D_i$ intersects the exceptional divisor $E$ (which is reduced) 
transversely at smooth points.
\end{enumerate}

It is clear that there exists a finite set of blown-up closed points
$T = \{x_1, \cdots , x_n\} \subset U$ such that 
$\phi: \phi^{-1}(X\setminus T) \to X \setminus T$
is an isomorphism. In particular, $\phi: X'_{\rm sing} = \phi^{-1}(D) \to D$
is an isomorphism.  If we identify $\phi^{-1}(D)$ with $D$ and let
$X'_\pm = {\rm Bl}_{T \cap X_\pm}(X_\pm)$, it becomes clear that
$X' = X'_+ \amalg_D X'_-$. We set $U' = 
X' \setminus D = \phi^{-1}(U)$. 
Let $C'$ denote the strict transform of $C$ with
components $\{C'_1, \cdots, C'_r\}$.

Since $\phi$ is an isomorphism over an open neighborhood of $D$,
it follows that $\phi^{-1}(S) \simeq S$ and the map $C' \to C$ is an
isomorphism along $D$. In particular, we have
$f \in \sO^{\times}_{C', S}$ and $\phi_*(\divf_{C'}(f)) = \divf_C(f) = n\alpha$.
Since ${\rm Supp}(\alpha) \subset C'$, we can find
$\alpha' \in \sZ_0(X')$ supported on $C'$ 
such that $\phi_*(\alpha') = \alpha$.
This implies that 
$\phi_*(n\alpha' - \divf_{C'}(f)) = 0$.
Setting $\beta = n\alpha' - \divf_{C'}(f)$,
it follows that $\beta$ must be a 0-cycle on the exceptional divisor
such that $\phi_*(\beta) = 0$.

We can now write $\beta = \stackrel{n}{\underset{i =0}\sum} \beta_i$,
where $\beta_i$ is a 0-cycle on $X'$ supported on $\phi^{-1}(x_i)$
for $1 \le i \le n$ and $\beta_0$ is supported on the complement $E$. 
We must then have $\phi_*(\beta_i) = 0$
for all $0 \le i \le n$. Since $\phi$ is an isomorphism away from 
$T = \{x_1, \cdots , x_n\}$, we must have $\beta_0 = 0$.
We can therefore assume that $\beta$ is a 0-cycle on $E$.

Next, we note that each $\phi^{-1}(\{x_i\})$ is a $(d-1)$-dimensional 
projective 
variety whose irreducible components are point blow-ups of $\P^{d-1}_k$,
intersecting transversely in $X'_{\rm reg}$. Moreover, we have 
$\phi_*(\beta_i) = 0$ for the push-forward map 
$\phi_*: \sZ_0(\pi^{-1}(\{x_i\})) \to \Z$, induced by $\phi: \phi^{-1}(\{x_i\})
\to \Spec(k(x_i)) = k$. But this means that 
${\rm deg}(\beta) = \stackrel{n}{\underset{i =0}\sum} {\rm deg}(\beta_i) = 0$.
In particular, there are finitely many smooth projective rational curves 
$L_j \subset E$ and rational functions $g_j \in k(L_j)$ such that
$\beta = \stackrel{s}{\underset{j = 1}\sum}  (g_j)_{L_j}$.

Using the argument of \cite[Lemma~5.2]{Bloch-duke}, 
we can further choose $L_j$'s so that
$C'':= C' \cup (\cup_j L_j)$ is a connected reduced curve with
following properties.
\begin{enumerate}
\item
Each component of $C''$ is smooth along $U'$.
\item
$C'' \cap U'$ has only ordinary double point singularities. 
\end{enumerate}

In particular, the embedding 
dimension of $C''$ at each of its singular points lying over $U$ is two.
Furthermore, $C'' \cap D = 
(C'' \setminus (\cup_j L_j)) \cap D = C' \cap D$.
This implies that $C''$ is a Cartier curve on $X'$.

We now fix a closed embedding $X'  = X'_+ \amalg_D X'_- \inj \P^N_k$
and choose a complete intersection surface $j':Y' \subset X'$ 
as in \lemref{lem:Bertini}. Let $\pi': X'_+ \amalg X'_- \to X'$
denote the normalization map with $\pi'^{-1}(Y') = Y'_+ \amalg Y'_-$.
Note that $E' = Y' \cap D$
is a reduced curve and $Y'_{\rm sing} = E'$. 
Since $C'' \cap E' = C' \cap E'$ and since $C''$ is Cartier on $Y'$, 
it follows that $C'$ and $L_j$'s are also Cartier curves on $Y'$.
Furthermore, $\alpha'$ is an element of
$\sZ_0(Y', E')$ such that $n\alpha' = \divf_{C'}(f) + \sum_j \divf_{L_j}(g_j)$.
In particular, $\alpha' \in \CH^{LW}_0(Y')$ and $n\alpha' = 0$ in 
$\CH^{LW}_0(Y')$.
Note that this also implies that $\alpha' \in \CH^{LW}_0(Y')_{\deg 0}$. 

It follows from \cite[Theorem~14]{Mallick} that there exists a commutative
diagram
\begin{equation}\label{eqn::RT-join-0}
\xymatrix@C1pc{
\CH^{LW}_0(Y')_{\deg 0} \ar[d]_{j'_*} \ar[r]^-{\rho^{\rm semi}_{Y'}} &
J^2(Y') \ar[d]^{j'_*} \ar[r]^-{\pi'^*} & 
J^2(Y'_+) \times J^2(Y'_-) \ar[d]^{j'_*} \\ 
\CH^{LW}_0(X')_{\deg 0} \ar[r]_-{\rho^{\rm semi}_{X'}} & 
J^d(X') \ar[r]_-{\pi'^*} & J^d(X'_+) \times J^d(X'_-).}
\end{equation}

Since $\phi: X' \to X$ is a blow-up at smooth closed points and since
the Albanese variety of a smooth projective scheme is a birational 
invariant, it follows from the construction of $J^d(X)$ in
\S~\ref{sec:SAQ} and \cite[Lemma~2.5]{ESV} that the canonical push-forward map 
$\phi_*: \sZ_0(X') \to \sZ_0(X)$ gives rise to a commutative diagram

\begin{equation}\label{eqn::RT-join-1}
\xymatrix@C1pc{
\CH^{LW}_0(X')_{\deg 0} \ar[r]^-{\rho^{\rm semi}_{X'}} \ar[d]_{\phi_*} & 
J^d(X') \ar[d]^{\phi_*} \\
\CH^{LW}_0(X)_{\deg 0} \ar[r]_-{\rho^{\rm semi}_{X}} &
J^d(X),}
\end{equation}
in which the two vertical arrows are isomorphisms.
It follows that $\alpha' \in \CH^{LW}_0(Y')_{\deg 0}$ is a 0-cycle such that
$\rho^{\rm semi}_{X'} \circ j'_*(\alpha') = 0$ in $J^d(X')$.
Using ~\eqref{eqn::RT-join-0}, we conclude that
\begin{equation}\label{eqn::RT-join-2}
j'_* \circ \pi'^* \circ \rho^{\rm semi}_{Y'}(\alpha') =
\pi'^* \circ \rho^{\rm semi}_{X'} \circ j'_*(\alpha') = 0.
\end{equation}

Since $Y'_+ \amalg Y'_- \to Y'$ is the normalization map
(see \lemref{lem:Bertini-Roitman}), we know that
$J^2(Y'_+) \times J^2(Y'_-)$ is the universal abelian variety 
quotient of $J^2(Y')$.
In particular, the map $J^2(Y')\{p\} \to J^2(Y'_+)\{p\} \times J^2(Y'_-)\{p\}$
is an isomorphism. On the other hand, $Y'_\pm \subset X'_\pm$ are the
iterated hypersurface sections of normal projective schemes and hence
it follows from \cite[Chap.~8, \S~2, Theorem~5]{Lang} that
the right vertical arrow in ~\eqref{eqn::RT-join-0} is an isomorphism of 
abelian varieties.
Note here that $X'$ or $Y'$ need not be smooth for this isomorphism.
It follows therefore from  ~\eqref{eqn::RT-join-2} that
$\rho^{\rm semi}_{Y'}(\alpha') = 0$. 
\lemref{lem:Bertini-Roitman} now implies that $\alpha' = 0$ and 
we finally get $\alpha = \phi_*(\alpha') = 0$.
\end{proof}

\enlargethispage{25pt}

\subsection{Applications of \thmref{thm:RT-join}}\label{sec:Appl-RT-join}
We now obtain some applications of \thmref{thm:RT-join}.
Our first result is the following comparison theorem.

\begin{thm}\label{cor:LW-lci-join}
Let $X_+$ and $X_-$ be two smooth projective schemes of dimension $d \ge 1$
over $k$ and let $X = X_+ \amalg_D X_-$ be as in ~\eqref{eqn:Bertini-0}.
Then, the canonical map $\CH^{LW}_0(X) \to \CH_0(X)$ is an isomorphism.
\end{thm}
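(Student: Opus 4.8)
The plan is to exploit that the canonical map $\CH^{LW}_0(X) \surj \CH_0(X)$ is always surjective and degree-preserving, so its kernel equals ${\sR_0(X)}/{\sR^{LW}_0(X)}$ and lies inside $\CH^{LW}_0(X)_{\deg 0}$; proving the theorem amounts to showing $\sR_0(X) = \sR^{LW}_0(X)$. Concretely, I would show that for every good curve $\nu\colon C \to X$ relative to $X$ and every $f \in \sO^{\times}_{C,Z}$, the relation $\nu_*(\divf(f))$ already lies in $\sR^{LW}_0(X)$, i.e. it is a sum of divisors of functions on \emph{Cartier} curves. When $d = \dim(X) \le 2$ there is nothing to prove: $X$ is then a reduced projective curve or surface and the two groups agree by \cite[Theorem~3.17]{BK}. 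The entire content is therefore the reduction of the case $d \ge 3$ to the surface case.

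For $d \ge 3$ the idea is to confine a given good-curve relation to a complete intersection surface section of $X$ on which the surface comparison applies. Using \lemref{lem:Bertini} together with the Bertini theorems of \cite{KL} and \cite[Lemmas~1.3,~1.4]{Levine-2} (and, in order to tame the singularities of the curve away from $D$, the blow-up construction and Bloch's argument \cite[Lemma~5.2]{Bloch-duke} exactly as in the proof of \thmref{thm:RT-join}), I would choose a closed embedding $X \inj \P^n_k$ and general hypersurfaces $H_1, \dots, H_{d-2}$ through the image curve such that $Y := X \cap H_1 \cap \cdots \cap H_{d-2}$ is a reduced projective surface with $Y \setminus D$ smooth and $Y_{\rm sing} \subseteq D = X_{\rm sing}$, containing $\nu(C)$ as a nowhere dense subcurve. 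Since $Y_{\rm sing} \subseteq X_{\rm sing}$ and the local complete intersection condition of Definition~\ref{defn:0-cycle-S-1}(3) along $D$ is inherited, the factorization $C \xrightarrow{\nu} Y \inj X$ exhibits $(C, Z)$ as a good curve relative to $Y$; hence $\nu_*(\divf(f)) \in \sR_0(Y)$.

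Now I would apply the surface case: as $Y$ is a reduced projective surface, $\sR_0(Y) = \sR^{LW}_0(Y)$ by \cite[Theorem~3.17]{BK}, so $\nu_*(\divf(f)) \in \sR^{LW}_0(Y)$. The inclusion $j\colon Y \inj X$ is a local complete intersection preserving the singular loci ($Y_{\rm sing} = Y \cap X_{\rm sing}$ and $X_{\rm reg} \cap Y \subseteq Y_{\rm reg}$), so a Cartier curve on $Y$ is again a Cartier curve on $X$; equivalently there is a push-forward $j_*\colon \CH^{LW}_0(Y) \to \CH^{LW}_0(X)$ carrying $\sR^{LW}_0(Y)$ into $\sR^{LW}_0(X)$ (see \cite[Lemma~1.8]{ESV}, as recalled in \S~\ref{sec:Lef}). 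Writing $\nu = j \circ \nu'$ with $\nu'\colon C \to Y$, so that $\nu_* = j_* \circ \nu'_*$, I conclude $\nu_*(\divf(f)) = j_*(\nu'_*(\divf(f))) \in \sR^{LW}_0(X)$, whence $\sR_0(X) = \sR^{LW}_0(X)$ and the theorem follows.

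The main obstacle is the Bertini/moving step of the second paragraph: producing a complete intersection surface $Y$ that passes through $\nu(C)$, is reduced, and has its singular locus confined to $D$, while simultaneously reducing the possibly non-embedded good curve to one whose image has at worst ordinary double points off $D$ (so that \lemref{lem:Bertini} applies and the good-curve structure descends to $Y$). This is precisely the technical core of the blow-up-and-section construction already carried out in \thmref{thm:RT-join}, of which the present statement is essentially the book-keeping shadow. As a consistency check one may observe, via \thmref{thm:RT-join}, that the kernel is contained in $\ker(\rho_X)$ and hence torsion-free (because $\rho_X$ is injective on torsion); this is reassuring but far from sufficient on its own, which is exactly why the geometric reduction above is the substance of the proof.
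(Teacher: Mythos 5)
Your main line of attack --- showing directly that every good-curve relation $\nu_*(\divf(f))$, with $\nu\colon C \to X$ a finite (not necessarily embedded) morphism, lies in $\sR^{LW}_0(X)$ by confining it to a complete intersection surface --- has a genuine gap at its core. \lemref{lem:Bertini} and the blow-up argument of \thmref{thm:RT-join} are designed for \emph{Cartier} curves, i.e.\ closed immersions whose singularities off $D$ have first been arranged to be ordinary double points; only then can one produce a surface section $Y$ with $Y_{\rm sing}$ contained in $D$ and passing through the curve. A general good curve is a finite morphism whose image can have arbitrary embedding dimension at points of $X \setminus D$, so any surface containing $\nu(C)$ will in general acquire singular points outside $D$; then $\nu^{-1}(Y_{\rm sing}) \not\subseteq Z$, the pair $(C,Z)$ is not a good curve relative to $Y$, the cycle $\nu_*(\divf(f))$ need not even lie in $\sZ_0(Y, Y_{\rm sing})$, and the appeal to the surface case of \cite[Theorem~3.17]{BK} collapses. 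Taming the image of a non-embedded curve is precisely the difficulty that motivates having both $\CH_0(X)$ and $\CH^{LW}_0(X)$ in the first place; deferring it to ``the blow-up construction as in \thmref{thm:RT-join}'' does not resolve it, both because that construction operates on embedded curves and because after blowing up one would still have to compare $\sR_0$ and $\sR^{LW}_0$ on $X'$ with those on $X$.

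Ironically, your closing ``consistency check'' is half of the paper's actual proof, and you discarded the wrong half. The paper sets $L = \ker\bigl(\CH^{LW}_0(X) \surj \CH_0(X)\bigr)$ and makes two observations: (i) by \thmref{thm:RT-join} the map $\rho^{\rm semi}_X$ factors through $\CH_0(X)_{\deg 0}$ and is injective on torsion, so $L$ is torsion-free (your observation); and (ii) the cycle class map $\CH^{LW}_0(X) \to K_0(X)$ also factors through $\CH_0(X)$ (\cite[Lemma~3.13]{BK}) and its kernel is torsion of bounded exponent by \cite[Corollary~2.7]{Levine-2}, so $L$ is torsion. Together these force $L = 0$, with no Bertini or moving argument at all. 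You were one citation away from the intended proof; the geometric reduction you propose in its place is not established and, in this generality, is genuinely hard.
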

\begin{proof}
Let $L$ denote the kernel of the surjection $\CH^{LW}_0(X) \surj \CH_0(X)$.
Using the factorization $\CH^{LW}_0(X) \surj \CH_0(X) \to K_0(X)$
(see \cite[Lemma~3.13]{BK} and \cite[Corollary~2.7]{Levine-2}),
we know that $L$ is a torsion subgroup of $\CH^{LW}_0(X)_{\deg 0}$ of bounded
exponent. On the other hand, we also have a factorization
$\CH^{LW}_0(X)_{\deg 0} \surj \CH_0(X)_{\deg 0} \surj J^d(X)$ by
~\eqref{eqn:alb-lci}. Since $J^d(X)$ is the universal semi-abelian variety
quotient of $A^d(X)$, we can use \thmref{thm:RT-join} to replace $J^d(X)$ by
$A^d(X)$. We therefore have a factorization $\CH^{LW}_0(X)_{\deg 0} \surj 
\CH_0(X)_{\deg 0} \surj A^d(X)$  of $\rho^{\rm semi}_X$. Another application of
\thmref{thm:RT-join} now shows that $L$ is torsion-free. Hence, it must be zero.
\end{proof}

As second application of \thmref{thm:RT-join}, we now prove the Roitman 
torsion theorem for the Chow group of 0-cycles 
with modulus when the underlying divisor is reduced.

\begin{thm}\label{thm:RT-Main}
Let $X$ be a smooth projective scheme of dimension $d \ge 1$ over $k$ and let
$D \subset X$ be an effective Cartier divisor. Assume that $D$ is reduced.
Then, the Albanese variety with modulus $A^d(X|D)$ is a semi-abelian variety
and the Abel-Jacobi map $\rho_{X|D}: \CH_0(X|D)_{\deg 0} \to A^d(X|D)$ is
isomorphism on the torsion subgroups.
\end{thm}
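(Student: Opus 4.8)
The plan is to deduce the theorem from the Roitman theorem for the double $S_X$ of $X$ along $D$. If $D=\emptyset$ the statement is the classical Roitman--Milne theorem for the smooth projective variety $X$ (see \cite{Roitman, Milne-1}), so I may assume $D\neq\emptyset$, in which case $S_X$ is connected. The first observation is that $S_X=S(X,D)$ is literally a join $X_+\amalg_D X_-$ in the sense of \S\ref{sec:gluing}, with $X_+=X_-=X$ glued along the \emph{reduced} Cartier divisor $D$. Consequently \thmref{thm:RT-join} applies directly to $S_X$: the Albanese $A^d(S_X)$ is a semi-abelian variety and $\rho_{S_X}\colon\CH_0(S_X)_{\deg 0}\to A^d(S_X)$ is an isomorphism on torsion subgroups, while $\CH^{LW}_0(S_X)=\CH_0(S_X)$ by \thmref{cor:LW-lci-join}. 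This is the key input that the earlier sections have been building toward.

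Next I would transport this information along the decomposition theorem \thmref{thm:BK-Main}, which provides the split exact sequence $0\to\CH_0(X|D)\xrightarrow{p_{+,*}}\CH_0(S_X)\xrightarrow{\iota^*_-}\CH_0(X)\to 0$ with splitting $\Delta^*$. The decisive simplification is that every torsion class has degree $0$; so, even though $\iota^*_-$ and $\Delta^*$ fail to preserve the degree-zero subgroups, their restrictions to torsion do, and for every prime $l$ (including $l=p$) one obtains a clean direct-sum decomposition $\CH_0(S_X)\{l\}\cong\CH_0(X|D)\{l\}\oplus\CH_0(X)\{l\}$ whose summands lie in degree $0$. On the Albanese side, the morphisms $\iota_-\colon X\to S_X$ and $\Delta\colon S_X\to X$ (with $\Delta\circ\iota_-=\id_X$) yield, by covariant functoriality of the universal regular quotient, a split injection $\iota_{-,*}\colon\Alb(X)\to A^d(S_X)$ with retraction $\Delta_*$; and the universal property of $A^d(X|D)$ factors the regular homomorphism $\rho_{S_X}\circ p_{+,*}$ as $j\circ\rho_{X|D}$ for a homomorphism $j\colon A^d(X|D)\to A^d(S_X)$ into the semi-abelian variety $A^d(S_X)$.

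For the torsion statement I would run a diagram chase on the commutative square relating $\rho_{X|D}\times\rho_X$ to $\rho_{S_X}$. Here $\rho_X$ is an isomorphism on torsion by the smooth Roitman--Milne theorem, $\rho_{S_X}$ is an isomorphism on torsion by \thmref{thm:RT-join}, and the top horizontal map is an isomorphism on torsion by the decomposition of the previous paragraph; commutativity then forces $\rho_{X|D}\times\rho_X$ to be injective on torsion, hence $\rho_{X|D}$ is injective on torsion — the deep half of the assertion. The complementary surjectivity on torsion, together with the semi-abelianness of $A^d(X|D)$, will both follow once $j$ is shown to be a closed immersion realizing $A^d(X|D)$ as a direct factor of $A^d(S_X)$.

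The main obstacle is exactly this last identification, which is where the degree mismatch noted above must be resolved. Concretely, one must identify $A^d(X|D)$ with the \emph{relative} direct factor $\ker(\Delta_*)$ of $A^d(S_X)$, that is, show that the map $\kappa:=j-\iota_{-,*}\circ\overline{q}\colon A^d(X|D)\to\ker(\Delta_*)$ — where $\overline{q}$ is induced by the canonical surjection $\CH_0(X|D)_{\deg 0}\twoheadrightarrow\CH_0(X)_{\deg 0}$ — is an isomorphism of algebraic groups. This matching is precisely what the construction of $A^d(X|D)$ through the double in \cite[\S 11]{BK} supplies; granting it, $A^d(X|D)$ is semi-abelian as a factor of $A^d(S_X)$, and the restriction of the torsion isomorphism $\rho_{S_X}$ to the relative factor is the torsion isomorphism $\rho_{X|D}$, completing the proof. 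I expect the verification of this compatibility, rather than any new geometric ingredient, to be the only delicate point.
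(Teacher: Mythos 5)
Your proposal is correct and follows essentially the same route as the paper: reduce to the double $S_X = X\amalg_D X$, apply \thmref{thm:RT-join} (via \thmref{cor:LW-lci-join}) to get the Roitman theorem for $S_X$, and transport the result back through the split exact sequences relating $\CH_0(X|D)_{\deg 0}$, $\CH_0(S_X)_{\deg 0}$, $\CH_0(X)_{\deg 0}$ and $A^d(X|D)$, $A^d(S_X)$, $A^d(X)$, using the classical Roitman--Milne theorem for the smooth $X$. The compatibility you flag as the delicate point is exactly what the paper invokes from \cite[(11.2)]{BK}, so your analysis matches the published argument.
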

\begin{proof}
By \thmref{cor:LW-lci-join} and \cite[(11.2)]{BK}, 
there is a commutative diagram of split exact sequences:
\[
\xymatrix@C.8pc{ 
0\to \CH_0(X|D)_{\deg 0 } \ar[r]^-{p_{+,*}} 
\ar[d]_{\rho_{X|D}} & \CH^{LW}_0(S_X)_{\deg 0} \ar[r]^-{i_-^*} 
\ar[d]^{\rho_{S_X}} & \CH_0(X)_{\deg 0} \ar[d]^{\rho_X}  \to 0\\ 
0\to A^d(X|D) \ar[r]_-{p_{+,*}}  & A^d(S_X) \ar[r]_-{i_-^*} 
& A^d(X) \to 0.}
\]

We note here that the constructions of \cite[\S~11]{BK} are based on
the assumption that \thmref{cor:LW-lci-join} holds.
Since $X$ is smooth over $k$, we know that $A^d(X)$ is an abelian variety.
Moreover, the right vertical arrow is isomorphism on 
the torsion subgroups by Roitman \cite{Roitman}
and Milne \cite{Milne-1}. 
We conclude that the theorem is equivalent to showing that
$A^d(S_X)$ is a semi-abelian variety and $\rho_{S_X}$ is isomorphism on 
the torsion subgroups. But this follows from 
Theorem~\ref{thm:RT-join}.
\end{proof}

\begin{cor}\label{cor:fin-dim-cor-mod}
Let $X$ be a smooth projective scheme of dimension $d \ge 1$ over $\ov{\F}_p$ 
and let $D \subset X$ be a reduced effective Cartier divisor. 
Then, $\CH_0(X|D)_{\deg 0}$ is finite-dimensional. That is, the Abel-Jacobi map
$\rho_X: \CH_0(X|D)_{\deg 0} \to A^d(X|D)$ is an isomorphism.
\end{cor}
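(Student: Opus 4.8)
The plan is to reduce the statement to the fact that $\CH_0(X|D)_{\deg 0}$ is a torsion group and then feed this into the Roitman torsion theorem already established in \thmref{thm:RT-Main}. First I would record that, by \thmref{thm:RT-Main}, the variety $A^d(X|D)$ is semi-abelian and the Abel-Jacobi map $\rho_{X|D}$ restricts to an isomorphism on the torsion subgroups. Over the field $\ov{\F}_p$ the group of rational points of any smooth commutative algebraic group $G$ is torsion, because $\ov{\F}_p = \bigcup_n \F_{p^n}$ and each $G(\F_{p^n})$ is a finite group; in particular $A^d(X|D)$ is itself a torsion abelian group. Consequently, once we know that the source $\CH_0(X|D)_{\deg 0}$ is also torsion, the isomorphism on torsion subgroups furnished by \thmref{thm:RT-Main} upgrades automatically to an isomorphism of the full groups, which is exactly the assertion of finite-dimensionality.

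Thus the only point that remains is to verify that $\CH_0(X|D)_{\deg 0}$ is torsion. Here I would invoke the decomposition underlying the proof of \thmref{thm:RT-Main}: the split exact sequence (coming from \thmref{thm:BK-Main} together with \thmref{cor:LW-lci-join}) exhibits $\CH_0(X|D)_{\deg 0}$ as a direct summand of $\CH^{LW}_0(S_X)_{\deg 0}$, where $S_X$ is the double of $X$ along $D$. Since a direct summand of a torsion group is torsion, it suffices to show that $\CH^{LW}_0(S_X)_{\deg 0}$ is torsion.

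For this last step I would repeat the argument already used in \corref{cor:fin-dim-cor}: the scheme $S_X$ is a reduced projective scheme of dimension $d \ge 1$ over $\ov{\F}_p$, and for any such scheme every class in $\CH^{LW}_0(S_X)_{\deg 0}$ lies in the image of the push-forward $\Pic^0(C) \to \CH^{LW}_0(S_X)_{\deg 0}$ for some reduced Cartier curve $C$; as $\Pic^0(C)$ is the group of $\ov{\F}_p$-points of a smooth commutative algebraic group it is torsion, and therefore so is $\CH^{LW}_0(S_X)_{\deg 0}$. Combining the three paragraphs gives that $\CH_0(X|D)_{\deg 0}$ is torsion, and hence that $\rho_{X|D}$ is an isomorphism. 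I do not anticipate a genuine obstacle here, since all of the substantive work is already contained in \thmref{thm:RT-Main} and \corref{cor:fin-dim-cor}; the only thing worth flagging is the observation that over $\ov{\F}_p$ the semi-abelian target is itself torsion, which is precisely what allows \emph{isomorphism on torsion} to be promoted to \emph{isomorphism} after one checks the source is torsion.
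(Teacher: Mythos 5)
Your proposal is correct and follows essentially the same route as the paper: the paper's proof likewise reduces everything to \thmref{thm:RT-Main} together with the fact (from the proof of \corref{cor:fin-dim-cor}) that the degree-zero Chow group of the double $S_X$ over $\ov{\F}_p$ is torsion. You merely spell out the two implicit steps — that $\CH_0(X|D)_{\deg 0}$ inherits torsion-ness as a summand of $\CH^{LW}_0(S_X)_{\deg 0}$, and that the semi-abelian target is itself torsion over $\ov{\F}_p$ so that the isomorphism on torsion subgroups is the full map — which the paper leaves to the reader.
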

\begin{proof}
In view of \thmref{thm:RT-Main}, we only need to show that 
$\CH_0(S_X)$ is a torsion abelian group. But this is already shown in
the proof of \corref{cor:fin-dim-cor}.
\end{proof}

\vskip .3cm


\noindent\emph{Acknowledgments.} 
The author would like to thank the referee for very carefully reading the
paper and providing many suggestions to improve its presentation.

\enlargethispage{15pt}

\end{document}